\newtheorem{theorem}{Theorem}[section]
\newtheorem*{main*}{Main Theorem}
\newtheorem{lemma}[theorem]{Lemma}
\newtheorem{proposition}[theorem]{Proposition}
\newtheorem{corollary}[theorem]{Corollary}
\newtheorem*{claim*}{\textsc{Claim}}
\theoremstyle{definition}
\newtheorem*{def*}{Definition}
\newtheorem{definition}[theorem]{Definition}
\newtheorem{example}[theorem]{Example}
\newtheorem{examples}[theorem]{Examples}
\newtheorem{remark}[theorem]{\textbf{Remark}}
\definecolor{blue-url}{RGB}{0,0,100}
\definecolor{green-link}{RGB}{0,97,0}
\DeclareMathSymbol{\widehatsym}{\mathord}{largesymbols}{"62}
\renewcommand{\emptyset}{\varnothing}
\renewcommand{\setminus}{\smallsetminus}
\providecommand{\eps}{\varepsilon}
\providecommand{\LLc}{\mathscr{L}}
\providecommand{\LLs}{\mathsf{L}}
\providecommand{\NNb}{{\mathbf{N}}}
\providecommand{\PPc}{\mathcal{P}}
\providecommand{\UUc}{\mathscr{U}}
\DeclareMathOperator{\lcm}{lcm}
\providecommand\llb{\llbracket}
\providecommand\rrb{\rrbracket}
\newcommand\DeltaF[1]{\Delta(#1)}
\newcommand{\fixed}[2][1]{%
  \begingroup
  \spaceskip=#1\fontdimen2\font minus \fontdimen4\font
  \xspaceskip=0pt\relax
  #2%
  \endgroup
}
\begin{document}
\title{Structural properties of subadditive families \\ with applications to factorization theory}
\author{Salvatore Tringali}
\address{Institute for Mathematics and Scientific Computing, University of Graz, NAWI Graz | Heinrichstr. 36, 8010 Graz, Austria}
\email{salvatore.tringali@uni-graz.at}
\urladdr{\href{https://imsc.uni-graz.at/tringali/}{https://imsc.uni-graz.at/tringali/}}
%
\subjclass[2010]{Primary 11B13, 13A05, 13F05, 13F15, 16U30, 20M13, 20M25. Secondary 11B30, 11R27.}
%
%
%
%

\keywords{Accepted elasticity; maximal orders; non-unique factorization; periodicity; structure theorems; transfer Krull monoids (and domains); unions of sets of lengths; subadditive families; weakly Krull monoids.}
\thanks{The author was supported by the Austrian Science Fund (FWF), Project No. M 1900-N39.}
\begin{abstract}
\noindent{}Let $H$ be a multiplicatively written monoid.
Given $k \in \mathbf N^+$, we denote by $\mathscr U_k$ the set of all $\ell \in \mathbf N^+$ such that $a_1 \cdots a_k = b_1 \cdots b_\ell$ for some atoms (or irreducible elements) $a_1, \ldots, a_k, b_1, \ldots, b_\ell \in H$. The sets $\mathscr U_k$ are one of the most fundamental invariants studied in the theory of non-unique factorization, and understanding their structure is a basic problem in the field: In particular, it is known that, in many cases of interest, these sets are almost arithmetic progressions with the same difference and bound for all large $k$, which is usually expressed by saying that $H$ satisfies the Structure Theorem for Unions.
The present paper improves the current state of the art on this problem.

More precisely, we will show that, under mild assumptions on $H$, not only does the Structure Theorem for Unions hold, but there also exists $\mu \in \mathbf N^+$ such that, for every $M \in \mathbf N$, the sequences
$$
\bigl((\mathscr U_k - \inf \mathscr U_k) \cap \llb 0, M \rrb\bigr)_{k \ge 1}
\quad\text{and}\quad
\bigl((\sup \mathscr U_k - \mathscr U_k) \cap \llb 0, M \rrb \bigr)_{k \ge 1}
$$
are $\mu$-periodic from some point on.
The result applies, for instance, to (the multiplicative monoid of) all commutative Krull do\-mains (e.g., Dedekind domains) with finite class group;
a variety of weakly Krull commutative domains (including all orders in number fields with finite elasticity); some maximal orders in central simple algebras over global fields; and all numerical monoids.

Large parts of the proofs are worked out in a ``purely additive model'' (where no explicit reference to monoids or atoms is ever made), by inquiring into the properties of what we call a subadditive family, i.e., a collection $\mathscr L$ of subsets of $\mathbf N$ such that, for all $L_1, L_2 \in \mathscr L$, there is $L \in \mathscr L$ with $L_1 + L_2 \subseteq L$.
\end{abstract}
\maketitle
\thispagestyle{empty}

\section{Introduction}
\label{sec:intro}
Similar to factorizations in the integers, non-zero non-unit elements in many integral domains can be written as (finite) products of irreducible elements, but unlike the case of the integers, such factorizations need not be
essentially unique: The main goal of factorization theory is to study phenomena arising from this lack of uniqueness and to classify them by an assortment of invariants.

The subject developed out of algebraic number theory, and a turning point in its history has been the crucial observation, which can be traced back to the early work of F.~Halter-Koch and A.~Geroldinger in the area, that questions of non-unique factorization in integral domains are purely multiplicative in nature and, hence, can be conveniently rephrased in the language of monoids, with the latter providing ``canonical models'' of the phenomena under consideration that would not be available otherwise \cite{GeHK06}. It is, however, only in recent years that fundamental aspects of factorization theory have been systematically extended to non-commutative or non-cancellative settings, see \cite{ChFoGeOb16, Ge16c, FTr} and references therein. Notably, an impetus to these developments has come from a more profound comprehension of the interplay between factorization theory and arithmetic combinatorics, which is also the leitmotif of this paper.

To begin, let $H$ be a multiplicatively written monoid
(basic notations and terminology will be explained later).
We take $\UUc_0 := \{0\} \subseteq \mathbf N$, and given $k \in \mathbf N^+$, we denote by $\mathscr U_k(H)$ the set of all $\ell \in \mathbf N^+$ such that $
a_1 \cdots a_k = b_1 \cdots b_{\ell}$ for some atoms $a_1, \ldots, a_k, b_1, \ldots, b_\ell \in H$ (see also Example \ref{exa:system-of-sets-of-lengths}), where an element of $H$ is an atom if it is neither a unit nor the product of two non-units: The sets $\UUc_k(H)$ are called \textit{unions of sets of lengths} and have been studied in factorization theory since decades, see \cite{Fa-Zh16a} for recent progress and \cite{Ge16c, Sc16a} for surveys.
In particular, we say that $H$ satisfies the \textit{Structure Theorem for Unions} if there exist $d \in \mathbf N^+$ and $M \in \mathbf N$ such that, for all but finitely many $k \in \mathbf N$,
$$
(k + d \cdot \mathbf Z) \cap \llb \inf \UUc_k(H) + M, \sup \UUc_k(H) - M \rrb \subseteq \mathscr U_k(H) \subseteq k + d \cdot \mathbf Z.
$$
The Structure Theorem for Unions holds for a wealth of cancellative monoids \cite{Ga-Ge09b, Ge16c}, and recent work  has revealed that the theorem admits a ``purely additive'' counterpart:
This was made possible by the introduction of \textit{directed families}, and has led, for the first time, to the extension of the theorem to a non-cancellative setting, see \cite[Theorem 2.2 and \S{ }3]{FGKT} and \cite[Theorem 3.6]{GeSchw17}.

Along the same lines of thought, the present paper is aimed to establish
a kind of periodicity of directed families
that applies primarily to unions of sets of lengths:
Nothing similar had been known so far, modulo the fact that, for important but rather special categories of monoids and domains,
the sets $\mathscr U_k$ are arithmetic progressions, if not even intervals as in the case of the ring of integers of a number field or, more in general, of a commutative Krull monoid with finite class group such that each class contains a prime, see \cite[Theorem 4.1]{Fr-Ge08}.
Moreover, some of the achievements of this work will probably help with one of the long term goals in all studies on unions of sets of lengths: To prove a realization theorem in the same spirit of what has already been done with sets of lengths \cite{Sc09a} and sets of distances \cite{Ge-Sc17a}.

With these ideas in mind, we state two of the main contributions of the manuscript. We start with:
\begin{theorem}
\label{th:structure-theorem}
Let $H$ be a monoid, and assume there is $K \in \mathbf N$ such that
$
\sup \UUc_{k+1}(H) \le \sup \UUc_k(H) + K < \infty$ and $
\inf \UUc_k(H) - K \le \inf \UUc_{k+1}(H)$ for all large $k \in \mathbf N$.
Then $H$ satisfies the Structure Theorem for Unions.
\end{theorem}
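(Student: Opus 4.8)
The plan is to recast the statement in the purely additive language of directed families and then reduce it to the Structure Theorem for Unions in that setting. First I would observe that $\mathscr{L} := (\mathscr{U}_k(H))_{k \in \mathbf N}$ is a directed (indeed, subadditive) family of subsets of $\mathbf N$: concatenating the two equalities from the definition of the $\mathscr{U}_k(H)$ yields $\mathscr{U}_h(H) + \mathscr{U}_k(H) \subseteq \mathscr{U}_{h+k}(H)$ for all $h, k \in \mathbf N$, and $k \in \mathscr{U}_k(H)$ for every $k$ as soon as $H$ has at least one atom (take all atoms equal). If $H$ has no atom, then $\mathscr{U}_k(H) = \emptyset$ for all $k \ge 1$ and the conclusion holds trivially, so I would assume henceforth that $\mathscr{U}_k(H) \ne \emptyset$ for all $k \ge 1$.

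Next I would extract the ``coarse shape'' of the two boundary functions $f\colon k \mapsto \sup \mathscr{U}_k(H)$ and $g\colon k \mapsto \inf \mathscr{U}_k(H)$. From the sumset containment, $f$ is superadditive and $g$ is subadditive; in particular $f(jm) \ge m\,f(j)$, so the standing assumption that $f(k) < \infty$ for all large $k$ already forces $f(k) < \infty$ for every $k \ge 1$, whence each $\mathscr{U}_k(H)$, $k \ge 1$, is finite and nonempty. The eventual bound $f(k+1) \le f(k) + K$ then gives $f(k) = O(k)$, so by Fekete's lemma the elasticity $\rho := \lim_k f(k)/k = \sup_k f(k)/k$ is finite, and $\rho \ge 1$ because $k \in \mathscr{U}_k(H)$; likewise $\lambda := \lim_k g(k)/k = \inf_k g(k)/k$ exists with $0 \le \lambda \le 1$. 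If $\rho = 1$, then $f(k) = k$ for all $k$, so (using the symmetry $\ell \in \mathscr{U}_k(H) \iff k \in \mathscr{U}_\ell(H)$) $\mathscr{U}_k(H) = \{k\}$ for every $k$, $H$ is half-factorial, and the conclusion holds with $d = 1$ and $M = 0$. Otherwise $\rho > 1 \ge \lambda$, so $f(k) - g(k) \to \infty$ linearly in $k$ --- precisely the ``room'' the Structure Theorem needs in the central range.

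At this point I would invoke the Structure Theorem for Unions for directed families --- in the form of \cite[Theorem 2.2]{FGKT} (cf.\ also \cite[Theorem 3.6]{GeSchw17}), or in the sharper form developed later in this paper within the framework of subadditive families --- whose hypotheses (directedness; finite nonempty sets; finite elasticity; linearly growing diameter; two-sided bounded jumps of $f$ and $g$) have just been verified for $\mathscr{L}$. Unwinding the conclusion of that theorem for $\mathscr{L}$ returns exactly the asserted statement for $H$.

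The genuine content --- and the step I expect to be the main obstacle --- is the combinatorial core of the additive theorem, which splits as follows: (i) exhibit a fixed $d \in \mathbf N^+$ with $\mathscr{U}_k(H) \subseteq k + d\mathbf Z$ for all large $k$, by using directedness to control the difference sets of the $\mathscr{U}_k(H)$; (ii) show that near each endpoint $g(k)$ and $f(k)$ the set $\mathscr{U}_k(H)$ contains an arithmetic progression of common difference $d$ and length growing at least linearly in $k$, where the finite elasticity together with the linear growth of $f(k) - g(k)$ is what makes room for such progressions; and (iii) combine these boundary progressions with one another, via $\mathscr{U}_h(H) + \mathscr{U}_k(H) \subseteq \mathscr{U}_{h+k}(H)$, so as to cover the entire central block $(k + d\mathbf Z) \cap \llb g(k) + M, f(k) - M \rrb$ with a \emph{single} constant $M$, uniformly in $k$. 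The delicate point is exactly this uniformity of $d$ and $M$ over all large $k$: it is here, rather than in mere finiteness of the elasticity, that the two-sided bounded-jump hypothesis is essential, since it prevents the combinatorial parameters at level $k$ and at level $k+1$ from drifting apart; I expect the bulk of the argument to lie in making this bookkeeping go through.
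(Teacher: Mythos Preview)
Your reduction strategy is right in spirit but misfires on one crucial point: the family you feed into the additive machinery is the wrong one. You set $\mathscr L := \bigl(\mathscr U_k(H)\bigr)_{k\in\mathbf N}$ and then intend to ``unwind'' the conclusion of the additive Structure Theorem for $\mathscr L$ into the asserted statement for $H$. But for any family $\mathscr L$ the conclusion of that theorem is a statement about the sets $\mathscr U_k(\mathscr L) = \bigcup\{L\in\mathscr L:\ k\in L\}$, and for your choice of $\mathscr L$ one has
\[
\mathscr U_k(\mathscr L)\;=\;\bigcup_{j\,\in\,\mathscr U_k(H)}\mathscr U_j(H),
\]
which in general \emph{strictly} contains $\mathscr U_k(H)$: from $\ell\in\mathscr U_j(H)$ and $k\in\mathscr U_j(H)$ one cannot conclude $\ell\in\mathscr U_k(H)$, since the two relations may be witnessed by unrelated elements of $H$. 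So neither the hypotheses (your bounds are on $\sup\mathscr U_k(H)$ and $\inf\mathscr U_k(H)$, not on $\rho_k(\mathscr L)$ and $\lambda_k(\mathscr L)$) nor the conclusion transfer as you claim.

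The fix is immediate and recovers exactly the paper's proof: take $\mathscr L := \mathscr L(H)$, the system of sets of lengths. Then $\mathscr U_k(\mathscr L(H)) = \mathscr U_k(H)$ holds \emph{by definition}, $\mathscr L(H)$ is directed (hence subadditive and primitive) whenever $\mathcal A(H)\ne\emptyset$, and the standing hypothesis of Theorem~\ref{th:structure-theorem} is verbatim the hypothesis of Theorem~\ref{th:structure-theorem-for-unions}. No further work is needed: the detour through Fekete's lemma and finite elasticity is correct but superfluous, since Theorem~\ref{th:structure-theorem-for-unions} assumes only the two-sided bounded-jump condition. Note also that your fallback to \cite[Theorem~2.2]{FGKT} would not suffice on its own, because that result requires $\Delta(\mathscr L)$ finite, which is \emph{not} implied here (cf.\ Example~\ref{exa:infinite-set-of-distances}); it is precisely Theorem~\ref{th:structure-theorem-for-unions} that dispenses with that hypothesis.
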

We will use (a purely additive version of) Theorem \ref{th:structure-theorem} to obtain a substantial refinement of the Struc\-ture Theorem for Unions. For, we say that $H$ has \textit{accepted elasticity} if the supremum of the set
$$
\{m/n: a_1 \cdots a_m = b_1 \cdots b_n \text{ for some atoms }a_1, \ldots, a_m, b_1, \ldots, b_n \in H\} \subseteq \mathbf Q^+
$$
is attained or zero.
Further, we denote by $\Delta(H)$ the \textit{set of distances} of $H$, i.e., the set of all $d \in \mathbf N^+$ for which there are $x \in H$ and $k \in \mathbf N^+$ such that $x$ has factorizations (into irreducible elements of $H$) of length $k$ and $k+d$, but $x \ne a_1 \cdots a_\ell$ for every $\ell \in \llb k+1, k+d-1 \rrb$ and all atoms $a_1, \ldots, a_\ell \in H$. Then we have:
\begin{theorem}
\label{th:main-theorem-intro}
Let $H$ be a monoid with accepted elasticity. Then $H$ satisfies the Structure Theorem for Unions and there exists $\mu \in \mathbf N^+$ such that, for every $M \in \mathbf N$,
$$
\bigl((\mathscr U_k - \inf \mathscr U_k) \cap \llb 0, M \rrb\bigr)_{k \ge 1}
\quad\text{and}\quad
\bigl((\sup \mathscr U_k - \mathscr U_k) \cap \llb 0, M \rrb \bigr)_{k \ge 1}
$$
are $\mu$-periodic sequences from some point on.
\end{theorem}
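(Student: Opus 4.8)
The plan is to argue directly with the family $(\UUc_k(H))_{k\ge 0}$ — a subadditive family in the sense of the abstract, since multiplying two factorization identities yields $\UUc_j(H)+\UUc_k(H)\subseteq\UUc_{j+k}(H)$ for all $j,k$ — and the paper in fact carries this out in the purely additive model, for subadditive families satisfying suitable hypotheses. Abbreviate $\UUc_k:=\UUc_k(H)$; we may assume $H$ has an atom, since otherwise $\UUc_k=\emptyset$ for every $k\ge 1$ and the statement is vacuous, and then $k\in\UUc_k$ for all $k\ge 1$. From $\UUc_j+\UUc_k\subseteq\UUc_{j+k}$ the sequence $k\mapsto\sup\UUc_k$ is superadditive and $k\mapsto\inf\UUc_k$ subadditive, so Fekete's lemma gives $\sup\UUc_k/k\to\rho\in[1,\infty]$, the elasticity of $H$, and, by the self-duality $\ell\in\UUc_k\Leftrightarrow k\in\UUc_\ell$, also $\inf\UUc_k/k\to 1/\rho$, with $\inf\UUc_k\ge k/\rho$ and $\sup\UUc_k\le\rho k$ for every $k$. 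Accepted elasticity forces $\rho<\infty$ (an infinite $\UUc_k$ would give $\rho=\infty$) and says precisely that $\sup\UUc_{k_0}=\rho k_0$ for some $k_0\in\mathbf N^+$; dualizing, $\inf\UUc_{n_0}=n_0/\rho$ for $n_0:=\rho k_0$. Iterating $\UUc_j+\UUc_k\subseteq\UUc_{j+k}$ along multiples of $N:=\lcm(k_0,n_0)$ then gives $\sup\UUc_{mN}=m\rho N$ and $\inf\UUc_{mN}=mN/\rho$ for all $m\in\mathbf N$ (integer-valued, as $\rho N=\sup\UUc_N$ and $N/\rho=\inf\UUc_N$).

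Next I would dispatch the Structure Theorem for Unions. Put $d_k:=\inf\UUc_k-k/\rho\ge 0$ and $e_k:=\rho k-\sup\UUc_k\ge 0$; both are non-negative and subadditive and vanish on multiples of $N$ by the previous step, so $d_{k+N}\le d_k$ and $e_{k+N}\le e_k$, while the increments $d_k-d_{k+N}$ and $e_k-e_{k+N}$ are non-negative integers (here using $N/\rho,\rho N\in\mathbf N$) whose partial sums along a fixed residue class modulo $N$ stay bounded. Hence $d$ and $e$ are eventually constant on each residue class modulo $N$; in particular $\sup\UUc_k<\infty$ for every $k$ and the differences $\sup\UUc_{k+1}-\sup\UUc_k$, $\inf\UUc_k-\inf\UUc_{k+1}$ are bounded for all large $k$, so Theorem \ref{th:structure-theorem} (or its additive version) applies.

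For the periodicity statement I claim $\mu:=N$ works. With $q':=\inf\UUc_N\in\UUc_N$ we have $\UUc_k+q'\subseteq\UUc_k+\UUc_N\subseteq\UUc_{k+N}$, and translating both sides by $-\inf\UUc_{k+N}$,
\[
\UUc_{k+N}-\inf\UUc_{k+N}\;\supseteq\;(\UUc_k-\inf\UUc_k)+c_k,\qquad c_k:=\inf\UUc_k+q'-\inf\UUc_{k+N}=d_k-d_{k+N}\ge 0,
\]
so that $c_k=0$ for all large $k$ and therefore $\UUc_k-\inf\UUc_k\subseteq\UUc_{k+N}-\inf\UUc_{k+N}$ eventually; symmetrically, with $p':=\sup\UUc_N\in\UUc_N$ and $e$ in place of $d$, one gets $\sup\UUc_k-\UUc_k\subseteq\sup\UUc_{k+N}-\UUc_{k+N}$ for all large $k$. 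Fix $M\in\mathbf N$: along each residue class modulo $N$ the sets $(\UUc_k-\inf\UUc_k)\cap\llb 0,M\rrb$ are eventually non-decreasing subsets of the finite set $\llb 0,M\rrb$, hence eventually constant, and, taking the largest of these finitely many thresholds, the sequence $\bigl((\UUc_k-\inf\UUc_k)\cap\llb 0,M\rrb\bigr)_{k\ge 1}$ is $N$-periodic from some point on; the same holds for $\bigl((\sup\UUc_k-\UUc_k)\cap\llb 0,M\rrb\bigr)_{k\ge 1}$. This proves the theorem with $\mu=N$.

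The real obstacle is the step producing a single modulus $N$ that makes $d$ and $e$ eventually constant on residue classes — i.e., the ``eventual quasi-linearity'' of $\inf\UUc_k$ and $\sup\UUc_k$; everything after that is bookkeeping with finite sets. In the abstract framework of subadditive families this requires reconstructing the elasticity $\rho$, the duality $\ell\in\UUc_k\Leftrightarrow k\in\UUc_\ell$, and the index $N$ from the weaker (and merely directed) axioms, and checking that the increments $d_k-d_{k+N}$ still vanish eventually — which is where the bulk of the technical work goes; the atomless case $\rho=0$ only needs the obvious conventions for $\inf\emptyset$ and $\sup\emptyset$.
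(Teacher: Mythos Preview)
Your argument is correct, and the overall architecture---accepted elasticity produces a modulus $N$ at which the upper and lower elasticities are exact, subadditivity then forces $\inf\UUc_k-k/\rho$ and $\rho k-\sup\UUc_k$ to be eventually constant on residue classes modulo $N$, and finally monotone inclusion of the shifted unions along these classes gives periodicity of the truncations---matches the paper's strategy. Where you diverge is in the execution of the last step.

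The paper does not work directly with the sequence $(\UUc_k)$ but with the subadditive family $\mathscr L(H)$, and for the periodicity it passes through the auxiliary ``higher-order'' local elasticities $\rho_{k,i}$, $\lambda_{k,i}$ (the $i$-th largest and smallest elements of $\UUc_k$): it first proves $\UUc_{h,i}+\UUc_{k,j}\subseteq\UUc_{h+k,i+j-1}$, then that each difference $\rho_k-\rho_{k,i}$ is eventually constant along residue classes, and only then concludes the periodicity of $(\rho_k-\UUc_k)\cap\llb 0,M\rrb$. Your route is more economical: once $e_k-e_{k+N}=0$ and $d_k-d_{k+N}=0$ eventually, the single inclusion $\UUc_k+\inf\UUc_N\subseteq\UUc_{k+N}$ (respectively $\UUc_k+\sup\UUc_N\subseteq\UUc_{k+N}$) directly gives $(\UUc_k-\inf\UUc_k)\subseteq(\UUc_{k+N}-\inf\UUc_{k+N})$ (respectively $(\sup\UUc_k-\UUc_k)\subseteq(\sup\UUc_{k+N}-\UUc_{k+N})$), and the pigeonhole on $\llb 0,M\rrb$ finishes. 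This bypasses the $\rho_{k,i}$ layer entirely. What the paper's abstraction buys is applicability to general subadditive families (not just $\mathscr L(H)$); what your argument buys is a shorter, more transparent proof of the monoid statement itself. One small point you glide over: the equivalence between ``$\rho(\mathscr L(H))$ is attained'' and ``$\sup\UUc_{k_0}=\rho k_0$ for some $k_0$'' is not a definition but a short lemma (Proposition~\ref{prop:equivalent_conditions_for_accepted_elasticity} in the paper), though easy enough to supply.
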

Theorem \ref{th:main-theorem-intro} applies in the first place to
(the multiplicative monoid of) all commutative Krull domains (e.g., Dedekind domains) with finite class group,
to some maximal orders in central simple algebras over global fields, and to a wide class of weakly Krull commutative domains (including all orders in algebraic number fields with finite elasticity); see \S{ }\ref{sec:focus-on-sets-of-lengths} for references and further applications.

As a matter of fact, we will not prove Theorems \ref{th:structure-theorem} and \ref{th:main-theorem-intro} directly: We will rather derive them from more general results on subadditive subfamilies of $\mathcal P(\mathbf N)$, which are the object of \S{ }\ref{sec:basic-properties-of-directed-families} (thus, we postpone the proofs of Theorems \ref{th:structure-theorem} and \ref{th:main-theorem-intro} to \S{ }\ref{sec:focus-on-sets-of-lengths}).
\subsection{Generalities}
\label{sec:notations}
Unless noted otherwise, we reserve the letters $d$, $m$, and $n$ (with or without subscripts) for positive integers, and the letters $h$, $i$, $j$, $k$, and $\kappa$ for non-negative integers. We use $\mathbf R$ for the reals, $\mathbf Q$ for the rationals, $\mathbf Z$ for the integers, and $\mathbf N$ for the non-negative integers.

We let a \textit{monoid} be a pair $(H, \otimes)$ consisting of a set $H$, systematically identified with the monoid itself if there is no danger of confusion, and an associative (binary) operation $\otimes: H \times H \to H$ for which there exists a (provably unique) element $e \in H$, the \textit{identity} of the monoid, such that $e \otimes x = x \otimes e = x$ for all $x \in H$.
We assume that monoid homomorphisms preserve the identity.

If $(H, \otimes\fixed[0.2]{\text{ }})$ is a monoid and $X, Y \subseteq H$, we set $X \fixed[-0.2]{\text{ }} \otimes Y := \{x \otimes y: (x,y) \in X \times Y\}$, and we denote by $H^\times$ the \textit{group of units} (or \textit{invertible elements}) of $H$; accordingly, we write $x \simeq_H y$, for $x, y \in H$, if there exist $u, v \in H^\times$ such that $x = u \otimes y \otimes v$.

If $a,b \in \mathbf R \cup \{\pm \infty\}$ and $d \in \mathbf N^+$, we let $\llb a, b \rrb := \{x \in \mathbf Z: a \le x \le b\}$
stand for the (discrete) interval between $a$ and $b$, and we take an \textit{arithmetic progression} (shortly, AP) \textit{with difference $d$} to be a set of the form $x + d \cdot \llb y, z \rrb$ with $x \in \mathbf Z$  and $y,z \in \mathbf Z \cup \{\pm \infty\}$ (note that an AP need not be finite or non-empty).

If $\lambda \in \mathbf R$ and $X, Y \subseteq \mathbf R$, we denote by $X^+$ the positive part of $X$ (so, $\mathbf N^+$ is the set of positive integers), and we define the sumset of $X$ and $Y$ by $X + Y := \{x+y: (x,y) \in X \times Y\}$, the $n$-fold sumset of $X$ by $
nX := \{x_1 + \cdots + x_n: x_1, \ldots, x_n \in X\}$, and the $\lambda$-dilation of $X$ by
$\lambda \cdot \fixed[-0.3]{\text{ }} X := \{\lambda x: x \in X\}$.

We let $\mathfrak{S}_n$ be the group of permutations of the interval $\llb 1, n \rrb$, and
we write $\mathcal P(X)$ for the power set of a set $X$.
Lastly, we adopt the con\-ven\-tion that $\sup \emptyset = \gcd\emptyset = \infty - \infty = 0 \cdot \infty = \infty \cdot 0 = \frac{a}{\infty} := 0$ and
$\inf \emptyset = \frac{a}{0} := \infty$ for every $a \in {[0,\infty[}\fixed[0.25]{\text{ }}$.

Further no\-ta\-tions and terminology, if not explained, are standard or should be clear from the context.
\section{Subadditive families}
\label{sec:basic-properties-of-directed-families}
In this section, we introduce, and prove several properties of, subadditive families: Some are refinements of analogous properties established in \cite[\S{ }2]{FGKT} under stronger conditions.

To begin, let $\mathscr{L}$ be a collection of (finite or infinite) subsets of $\mathbf N$. Given $i \in \mathbf N^+$ and $ k \in \mathbf N$, we define
$$
\mathscr{U}_k(\mathscr L) := \mathscr{U}_{k,1}(\mathscr L) := \bigcup \fixed[-0.2]{\text{ }} \{L \in \mathscr{L}: k \in L\}
\quad\text{and}\quad
\mathscr{U}_{k,i+1}(\mathscr L) := \mathscr{U}_{k,i}(\mathscr L) \setminus \{\lambda_{k,i}(\mathscr{L}),\rho_{k,i}(\mathscr L)\},
$$
where $
\lambda_{k,i}(\mathscr{L}) := \inf \mathscr{U}_{k,i}(\mathscr{L})
$ and $
\rho_{k,i}(\mathscr{L}) := \sup \mathscr{U}_{k,i}(\mathscr{L})$;
in particular, we take
$$
\lambda_k(\mathscr{L}) := \lambda_{k,1}(\mathscr L)
\quad\text{and}\quad
\rho_k(\mathscr L) := \rho_{k,1}(\mathscr L).
$$
We refer to $\rho_k(\mathscr{L})$ and $\lambda_k(\mathscr{L})$, respectively, as the \textit{$k$-th upper} and the \textit{$k$-th lower} \textit{local elasticity} of $\mathscr{L}$.

We write $\rho(\mathscr{L})$ for the supremum of $\rho(L) := \sup L/\inf L^+$ as $L$ ranges over $\mathscr{L}$, and
we set $\lambda(\mathscr{L}) := 1/\rho(\mathscr{L})$.
We call $\rho(\mathscr{L})$ and $\lambda(\mathscr{L})$, re\-spec\-tive\-ly, the \textit{upper} and the \textit{lower elasticity} of $\mathscr{L}$:
Since we assume $\inf \emptyset := \infty$, it is clear that
$\{\rho(L): L \in \mathscr{L}\} \subseteq \{0\} \cup [1,\infty]$, and hence $\rho(\LLc) = 0$ or $1 \le \rho(\LLc) \le \infty$.
We say that $\mathscr{L}$ has \textit{accepted elasticity} if $\LLc = \emptyset$ or $\rho(\LLc) = \rho(L) < \infty$ for some $L \in \mathscr{L}$.

We take $\wp(\LLc)$ to be the greatest common divisor of the set $\bigcup \fixed[-0.2]{\text{ }} \{L^+: L \in \LLc\} \subseteq \mathbf N^+$. Observe that, in our conventions, $\wp(\LLc)$ is a non-negative integer, with $\wp(\LLc) = 0$ if and only if $\LLc \subseteq \fixed[-0.2]{\text{ }} \bigl\{\emptyset, \{0\}\bigr\}$.

We denote by $\Delta(L)$, for a given $L \subseteq \mathbf N$, the set of all integers $d \ge 1$ such that there exists $\ell \in L$ with $L \cap \llb \ell, \ell + d \fixed[0.2]{\text{ }} \rrb = \{\ell, \ell + d\}$. Accordingly,  we let
$$
\DeltaF{\LLc} := \bigcup_{L \in \mathscr L} \Delta(L)
\quad\text{and}\quad
\Delta_\cup(\LLc) := \bigcup_{k \ge 0} \Delta(\mathscr U_k(\mathscr L)).
$$
We call $\DeltaF{\LLc}$ the \textit{set of distances} (or \textit{delta set}) of $\mathscr L$, and we define $\delta(\LLc) := \inf \Delta(\LLc)$. It is trivial that $\Delta(\LLc) \subseteq \mathbf N^+$ and $\delta(\LLc) \in \mathbf N^+ \cup \{\infty\}$, with $\delta(\LLc) = \infty$ if and only if $\Delta(\LLc) = \emptyset$.

Lastly, we say that $\mathscr{L}$ is: \textit{finitary} if $|L| < \infty$ for all $L \in \mathscr{L}$; \textit{subadditive}
if for all $L_1, L_2 \in \mathscr{L}$ there is a set $L \in \mathscr{L}$ with $L_1 + L_2 \subseteq L$; \textit{directed} if it is subadditive and $1 \in L^\prime$ for some $L^\prime \in \mathscr{L}$; and \textit{primitive} if $\wp(\LLc) = 1$. Note that every directed family is primitive.

We will usually omit the dependence of the above quantities on $\mathscr{L}$ when $\mathscr{L}$ is implied from the context, so as to write $\rho$ in place of $\rho(\mathscr{L})$, $\mathscr{U}_k$ instead of $\mathscr{U}_k(\mathscr{L})$, etc.

The following are key examples of subadditive, directed, or finitary families we shall have in mind: The second of them is of great importance in factorization theory and will be the focus of \S{ }\ref{sec:focus-on-sets-of-lengths}.
\begin{example}
\label{exa:systems-of-sets-of-lengths}
Let $H$ be a multiplicatively written monoid with identity $1_H$; $A$ a subset of $H$ such that $1_H \notin \langle A \rangle_H$, where $\langle A \rangle_H$ is the subsemigroup of $H$ generated by $A$; and $\eta$ a function $A \to \mathbf N$, which, roughly speaking, assigns a (non-negative integral) ``weight'' to each element of $A$.

We set $\LLs_H(1_H; \eta) := \{0\} \subseteq \mathbf N$, and for every $x \in H \setminus \{1_H\}$ we take $\LLs_H(x\fixed[0.22]{\text{ }}; \eta) := \{\eta(a_1) + \cdots + \eta(a_n): x = a_1 \cdots a_n\text{ for some }a_1, \ldots, a_n \in A\}$.
We claim that the family
$$
\mathscr{L}(H\fixed[0.22]{\text{ }}; \eta) := \{\mathsf L_H(x\fixed[0.22]{\text{ }}; \eta): x \in H\} \setminus \{\emptyset\} \subseteq \mathcal P(\mathbf N)
$$
is subadditive. Indeed, pick $x, y \in H$ such that $\mathsf L_H(x\fixed[0.22]{\text{ }}; \eta)$ and $\mathsf L_H(y\fixed[0.22]{\text{ }}; \eta)$ are non-empty:
We aim to prove
$$
\mathsf L_H(x\fixed[0.22]{\text{ }}; \eta) + \mathsf L_H(y\fixed[0.22]{\text{ }}; \eta) \subseteq \mathsf L_H(xy\fixed[0.22]{\text{ }}; \eta).
$$
This is obvious if $x$ or $y$ is $1_H$. Otherwise, it suffices to observe that, if $x = a_1 \cdots a_m$ and $y = b_1 \cdots b_n$ for some $a_1, \ldots, a_m, b_1, \ldots, b_n \in A$, and hence $\sum_{i=1}^m \eta(a_i) \in \mathsf L_H(x\fixed[0.22]{\text{ }}; \eta)$ and $\sum_{i=1}^n \eta(b_i) \in \mathsf L_H(y\fixed[0.22]{\text{ }}; \eta)$, then $xy = a_1 \cdots a_m b_1 \cdots b_n \ne 1_H$ (recall that $1_H \notin \langle A \rangle_H$), so that $\sum_{i=1}^m \eta(a_i) + \sum_{i=1}^n \eta(b_i) \in \mathsf L_H(xy\fixed[0.22]{\text{ }}; \eta)$.

An analogous construction, restricted to the case when $A \subseteq H \setminus H^\times$ and $\eta(a) := 1$ for all $a \in A$, was considered in \cite[Example 2.1]{FGKT}, where it is maintained that $\mathscr{L}(H\fixed[0.22]{\text{ }}; A)$ is a subadditive family, with or without the assumption that $H^\times$ is disjoint from $\langle A \rangle_H$: This claim is actually incorrect (though the issue does not affect the main results of \cite{FGKT}), as we can see from \cite[Lemma 2.2 and Proposition 2.30]{FTr} when $A = H \setminus H^\times$ and $H$ is \textit{not} Dedekind-finite (i.e., there are $x, y \in H$ such that $xy = 1_H \ne yx$).

Besides that, our construction can model many more ``real-life situations''. For instance, fix $n \in \mathbf N^+$, and let $G$ be the additive group of the integers modulo $n$; $G_0$ a subset of $G$; and $H$ the monoid of zero-sum sequences over $G$ with support in $G_0$ (see \cite[Definition 2.5.5]{GeHK06} for notations and terminology). We as\-so\-cia\-te to each $x \in G$ a weight $a_x \in \mathbf N$ (e.g., the smallest non-negative integer in the congruence class $x$). Then, we may take $A$ to be the set of all \textit{minimal} zero-sum sequences over $G$ with support in $G_0$, and for every (non-empty) sequence $\mathfrak s = x_1 \cdots x_k \in A$ define $\eta(\mathfrak s) := a_{x_1} + \cdots + a_{x_k}$.

Incidentally, a construction in the same spirit as ours was studied by Halter-Koch in \cite{HK93}, where it is, however, assumed that $H$ is a cancellative, commutative monoid with trivial group of units; $A$ is a finite set with $H = \{1_H\} \cup \langle A \rangle_H$ (in particular, $H$ is finitely generated); and $\eta$ is a function $A \to \mathbf Z$ (that is, Halter-Koch's construction allows signed integral weights, which is not the case in the present work).
\end{example}
\begin{example}
\label{exa:system-of-sets-of-lengths}
Keeping the notations of Example \ref{exa:systems-of-sets-of-lengths}, let $\mathcal A(H)$ denote the \textit{set of atoms} (or \textit{irreducible elements}) of $H$ and $\eta$ the constant map $\mathcal A(H) \to \mathbf N: a \mapsto 1$.
We define $\mathscr{L}(H) := \mathscr{L}(H\fixed[0.22]{\text{ }}; \eta)$, and we set, for every $x \in H$, $\mathsf L_H(x) := \mathsf L_H(x\fixed[0.22]{\text{ }}; \eta)$.
We refer to $\mathscr{L}(H)$ as the \textit{system of sets of lengths} of $H$.

Clearly, $\mathscr{L}(H)$ is a finitary family if $H$ is a BF-monoid, viz., $1 \le |\mathsf L_H(x)| < \infty$ for every $x \in H \setminus H^\times$. Moreover, we have by \cite[Lemma 2.2(i)]{FTr} that $1_H \notin \langle \mathcal A(H) \rangle_H$.
So, if $\mathcal A(H)$ is non-empty, $\mathscr{L}(H)$ is a directed family by the considerations of Example \ref{exa:systems-of-sets-of-lengths} and the fact that $1 \in \LLs_H(a)$ for all $a \in \mathcal A(H)$; otherwise, $\mathscr L(H)$ is equal to $\bigl\{\{0\}\bigr\}$, which is a subadditive family in a trivial way.
\end{example}
\begin{example}
Let $\mathscr L$ be a subadditive family, and fix $\alpha \in \mathbf R$. We want to show that the family
$$
\mathscr L_\alpha := \{L \in \mathscr L: \rho(L) \ge \alpha\} \subseteq \mathscr L
$$ is also subadditive (note that $\mathscr L_\alpha$ need not be directed, no matter whether $\mathscr L$ is).

It is enough to consider the case when $\mathscr L_\alpha$ is non-empty (otherwise the claim is trivial) and $\alpha \in \mathbf R^+$ (otherwise $\mathscr L_\alpha = \mathscr L$, and there is nothing to prove). Accordingly, pick $L_1, L_2 \in \mathscr L_\alpha$.
Since $\mathscr L$ is a sub\-ad\-di\-tive family and $L_1, L_2 \in \mathscr L$, there exists $L \in \mathscr L$ with $L_1 + L_2 \subseteq L$. Also, $L_1^+$ and $L_2^+$ are non-empty, because $\rho(L_1)$ and $\rho(L_2)$ are both positive. It follows that
$$
\rho(L) = \frac{\sup L}{\inf L^+} \ge \frac{\sup (L_1 + L_2)}{\inf (L_1 + L_2)^+} \ge \frac{\sup L_1 + \sup L_2}{\inf L_1^+ + \inf L_2^+} \ge \min (\rho(L_1), \rho(L_2) ) \ge \alpha,
$$
which yields $L \in \mathscr L_\alpha$ and shows that $\mathscr L_\alpha$ is a subadditive family, as wished.
\end{example}
\begin{example}
\label{exa:power-monoids}
Following \cite[\S\S{ }3--4]{FTr}, let $\mathcal P_{\rm fin}(\mathbf N)$ denote the power monoid of $(\mathbf N, +)$, i.e., the set of all non-empty finite subsets of $\mathbf N$ endowed with the operation of \textit{set addition}
$$
\PPc_{\rm fin}(\mathbf N) \times \PPc_{\rm fin}(\mathbf N) \to \PPc_{\rm fin}(\mathbf N): (X, Y) \mapsto X+Y.
$$
Every subsemigroup of $\mathcal P_{\rm fin}(\mathbf N)$ is a finitary, subadditive family, but of course need not be directed.
\end{example}
We proceed to prove a basic result (on the set of distances of a subadditive family) that is essentially an extension of \cite[Proposition 2.9]{FGKT}, where the scope was restricted to directed families.
\begin{proposition}
\label{prop:generalized-deltas}
Let $\LLc \subseteq \mathcal P(\mathbf N)$ be a subadditive family with $\DeltaF{\LLc} \ne \emptyset$, and let $\Delta'$ be a non-empty subset of $\DeltaF{\LLc}$ such that $\gcd \Delta' \le \delta$. Then $\gcd \Delta' = \delta$. In particular, $\delta = \gcd \DeltaF{\LLc}$.

\end{proposition}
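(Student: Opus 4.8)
The plan is to deduce the whole statement from the single claim that $\delta := \delta(\LLc)$ divides every element of $\DeltaF{\LLc}$. Note first that, since $\DeltaF{\LLc}$ is by hypothesis a non-empty subset of $\mathbf N^+$, its infimum $\delta$ is actually its minimum, so $\delta \in \DeltaF{\LLc} \cap \mathbf N^+$. Granting the claim, the argument is then routine: because $\Delta' \subseteq \DeltaF{\LLc}$, the integer $\delta$ divides every element of $\Delta'$, hence $\delta \mid \gcd \Delta'$ and in particular $\delta \le \gcd \Delta'$ (here $\gcd \Delta' \in \mathbf N^+$, as $\emptyset \ne \Delta' \subseteq \mathbf N^+$); together with the hypothesis $\gcd \Delta' \le \delta$ this forces $\gcd \Delta' = \delta$. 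For the final assertion I take $\Delta' := \DeltaF{\LLc}$: it is non-empty by hypothesis, and $\gcd \DeltaF{\LLc} \le \delta$ because $\gcd \DeltaF{\LLc}$ divides $\delta \in \DeltaF{\LLc}$, so the case already settled gives $\gcd \DeltaF{\LLc} = \delta$.

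To prove the claim, fix $d \in \DeltaF{\LLc}$; I may assume $d > \delta$, since $d = \delta$ is trivial and $d < \delta$ is impossible. By definition of $\DeltaF{\LLc}$ there are $L_1, L_2 \in \LLc$, $\ell_1 \in L_1$, and $\ell_2 \in L_2$ with $L_1 \cap \llb \ell_1, \ell_1 + \delta \rrb = \{\ell_1, \ell_1 + \delta\}$ and $L_2 \cap \llb \ell_2, \ell_2 + d \rrb = \{\ell_2, \ell_2 + d\}$. Pick $n \in \mathbf N^+$ with $n\delta > d$. Applying subadditivity repeatedly — first to obtain a member of $\LLc$ containing $nL_1 := L_1 + \cdots + L_1$, then one containing $L_2 + nL_1$ — I get $L^\ast \in \LLc$ with $L_2 + nL_1 \subseteq L^\ast$. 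For each $j \in \llb 0, n \rrb$ the point $A_j := \ell_2 + n\ell_1 + j\delta$ lies in $L_2 + nL_1 \subseteq L^\ast$, by writing $A_j = \ell_2 + \bigl(j(\ell_1 + \delta) + (n-j)\ell_1\bigr)$ and using $\ell_1, \ell_1 + \delta \in L_1$ and $\ell_2 \in L_2$; likewise $B := (\ell_2 + d) + n\ell_1 \in L^\ast$, since $\ell_2 + d \in L_2$.

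Now write $d = q\delta + r$ with $q \in \mathbf N$ and $0 \le r < \delta$; from $q\delta \le d < n\delta$ we get $0 \le q < n$, so $A_q \in L^\ast$. If $r = 0$ then $\delta \mid d$ and we are done, so assume $0 < r < \delta$. Then $B - A_q = d - q\delta = r$, so $A_q < B$ and $B - A_q < \delta$. Let $c$ be the least element of $L^\ast$ exceeding $A_q$ (there is one, since $B \in L^\ast$ and $B > A_q$); then $A_q < c \le B$, hence $0 < c - A_q \le r < \delta$, while $L^\ast \cap \llb A_q, c \rrb = \{A_q, c\}$ by minimality of $c$. Therefore $c - A_q \in \Delta(L^\ast) \subseteq \DeltaF{\LLc}$ with $c - A_q < \delta = \inf \DeltaF{\LLc}$, which is absurd; so $r = 0$, proving the claim.

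The only delicate point is this last step. Since $\ell_1$ and $\ell_2$ are merely \emph{some} elements of $L_1$ and $L_2$, not minimal ones, the points $A_j$ need not be the least elements of $L^\ast$ in their ranges, and $L^\ast$ may carry many additional elements between consecutive $A_j$'s — so one cannot read off a short gap directly from the $A_j$'s. Passing to $c$, the first element of $L^\ast$ strictly above $A_q$, circumvents this: $c$ is automatically $\le B$, which exhibits a genuine (empty) gap of $L^\ast$ of length $< \delta$. I also note that subadditivity is all that is used — neither directedness nor primitivity of $\LLc$ enters — which is precisely what makes this a strengthening of \cite[Proposition 2.9]{FGKT}.
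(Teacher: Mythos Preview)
Your proof is correct, but it follows a different route from the paper's. The paper works directly with the given $\Delta'$: using B\'ezout's lemma it writes $\gcd \Delta' = \sum_i \eps_i m_i d_i$ with $d_i \in \Delta'$, $m_i \in \mathbf N^+$, $\eps_i \in \{\pm 1\}$; then, choosing $L_i \in \LLc$ with $\{x_i, x_i + \eps_i d_i\} \subseteq L_i$ and assembling via subadditivity a single $L \in \LLc$ containing both $\ell := \sum_i m_i x_i$ and $\ell + \gcd \Delta'$, it concludes $\delta \le \inf \Delta(L) \le \gcd \Delta'$ in one stroke. You instead reduce everything to the claim $\delta \mid d$ for each $d \in \Delta(\LLc)$, and prove this by embedding a $\delta$-step arithmetic progression together with a point at offset $d$ into a single $L^\ast$, then extracting a forbidden gap of length $<\delta$ from Euclidean division with remainder. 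Your argument is a bit more hands-on (only division with remainder, no B\'ezout) and makes the divisibility $\Delta(\LLc) \subseteq \delta \cdot \mathbf N^+$ explicit as an intermediate step; the paper's argument is shorter and treats general $\Delta'$ without a preliminary reduction. Both rely solely on subadditivity, so your closing remark about the comparison with \cite[Proposition 2.9]{FGKT} applies equally to either proof.
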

\begin{proof}
Set $\delta^{\fixed[0.2]{\text{ }} \prime} := \gcd \Delta'$. Since $\Delta' \ne \emptyset$, we get from \cite[Theorem 1.4]{Nath} that there are $\eps_1, \ldots, \eps_n \in \{\pm 1\} \subseteq \mathbf Z$, $d_1, \ldots, d_n \in \Delta'$, and $m_1, \ldots, m_n \in \mathbf N^+$ such that $\delta^{\fixed[0.2]{\text{ }} \prime} = \eps_1 m_1 d_1 + \cdots + \eps_n m_n d_n$.

In addition, for each $i \in \llb 1, n \rrb$ we can find $x_i \in \mathbf N$ and $L_i \in \LLc$ with $\{x_i, x_i + \eps_i d_i\} \subseteq L_i$. Because $\LLc$ is a subadditive family, this yields
$\{m_i x_i, m_i (x_i + \eps_i d_i)\} \subseteq
m_i L_i \subseteq L_i'$ for some $L_i' \in \LLc$. Moreover, there is a set $L \in \LLc$ such that $L_1' + \cdots + L_n' \subseteq L$. Put $\ell := m_1 x_1 + \cdots + m_n x_n$.

Then we have by the above that
$
\ell + \delta^{\fixed[0.2]{\text{ }} \prime} = \sum_{i=1}^n m_i (x_i + \eps_i d_i)
$,
and we infer that $\ell$ and $\ell+\delta^{\fixed[0.2]{\text{ }} \prime}$ are both in $L$. Thus $\delta \le \inf \Delta(L) \le \delta^{\fixed[0.2]{\text{ }} \prime} = \gcd\Delta'$, which is enough to conclude $\gcd\Delta' = \delta$, in that we are assuming $\gcd\Delta' \le \delta$. (Since $\gcd \DeltaF{\LLc} \le \delta$, the rest is clear.)
\end{proof}
\begin{corollary}
\label{cor:delta_sets}
Let $\LLc \subseteq \mathcal P(\mathbf N)$ be a subadditive family with $\DeltaF{\LLc} \ne \emptyset$. The following hold:
\begin{enumerate}[label={\rm (\roman{*})}]
\item\label{it:cor:delta_sets(i)} If $L \in \LLc$ and $x, y \in L$, then $\delta \mid y - x$.
\item\label{it:cor:delta_sets(ii)} If $x,y \in \UUc_k$ for some $k \in \NNb$, then $\delta \mid y-x$.
\item\label{it:cor:delta_sets(iii)} For every $q \in \mathbf N$, there exist $\ell \in \mathbf N^+$ and $L \in \LLc$ such that $\ell + \delta \cdot \llb 0, q \rrb \subseteq L \subseteq \UUc_{\ell}$.
\end{enumerate}
\end{corollary}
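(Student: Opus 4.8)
The plan is to dispatch the three items in order. The common setup: since $\DeltaF{\LLc}\ne\emptyset$ we have $\delta=\inf\DeltaF{\LLc}\in\mathbf N^+$, and by Proposition \ref{prop:generalized-deltas} moreover $\delta=\gcd\DeltaF{\LLc}$, so $\delta$ divides every element of $\DeltaF{\LLc}$; this divisibility is the only thing (i) and (ii) really need.

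For (i), I may assume $x<y$, the case $x=y$ being trivial. Then $L\cap\llb x,y\rrb$ is a finite non-empty set; writing its elements in increasing order as $x=\ell_0<\ell_1<\cdots<\ell_r=y$ (so $r\ge 1$), each consecutive pair satisfies $L\cap\llb \ell_j,\ell_j+(\ell_{j+1}-\ell_j)\rrb=\{\ell_j,\ell_{j+1}\}$, hence $\ell_{j+1}-\ell_j\in\Delta(L)\subseteq\DeltaF{\LLc}$ and $\delta\mid\ell_{j+1}-\ell_j$; summing the telescoping differences gives $\delta\mid y-x$. Item (ii) is then immediate: if $x,y\in\UUc_k$, the definition of $\UUc_k$ yields $L_1,L_2\in\LLc$ with $\{k,x\}\subseteq L_1$ and $\{k,y\}\subseteq L_2$, so (i) gives $\delta\mid x-k$ and $\delta\mid y-k$, whence $\delta\mid x-y$.

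Item (iii) is the crux, and I would argue by induction on $q$. The first observation is a reduction: as soon as we exhibit $\ell\in\mathbf N^+$ and $L\in\LLc$ with $\ell+\delta\cdot\llb 0,q\rrb\subseteq L$, the membership $\ell\in L$ forces $L\subseteq\UUc_\ell$ by the very definition of $\UUc_\ell$, so the inclusion chain $\ell+\delta\cdot\llb 0,q\rrb\subseteq L\subseteq\UUc_\ell$ comes for free and it suffices to produce such $\ell$ and $L$. The engine of the induction is a single ``unit step'': arguing exactly as in the proof of Proposition \ref{prop:generalized-deltas} — express $\delta=\eps_1m_1d_1+\cdots+\eps_nm_nd_n$ with $\eps_i\in\{\pm1\}$, $m_i\in\mathbf N^+$, $d_i\in\DeltaF{\LLc}$ (using \cite[Theorem 1.4]{Nath}), realize each $x_i,x_i+\eps_id_i$ inside some $L_i\in\LLc$, pass to $m_iL_i$ and then to a member $L'\in\LLc$ containing $L_1'+\cdots+L_n'$ — one obtains $L'\in\LLc$ and $u\in\mathbf N$ with $\{u,u+\delta\}\subseteq L'$. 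For the base case $q=0$, pick (using $\DeltaF{\LLc}\ne\emptyset$) a set $L_0\in\LLc$ with $\Delta(L_0)\ne\emptyset$; then $L_0^+\ne\emptyset$, and any $\ell_0\in L_0^+$ satisfies $\ell_0\ge 1$ and $\ell_0+\delta\cdot\llb 0,0\rrb=\{\ell_0\}\subseteq L_0$. For the inductive step, given $L_q\in\LLc$ and $\ell_q\ge 1$ with $\ell_q+\delta\cdot\llb 0,q\rrb\subseteq L_q$, subadditivity provides $L_{q+1}\in\LLc$ with $L_q+L'\subseteq L_{q+1}$, and with $\ell_{q+1}:=\ell_q+u\ge\ell_q\ge 1$ we get
$$
\ell_{q+1}+\delta\cdot\llb 0,q+1\rrb=\bigl(\ell_q+\delta\cdot\llb 0,q\rrb\bigr)+\{u,u+\delta\}\subseteq L_q+L'\subseteq L_{q+1},
$$
which closes the induction.

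The main obstacle is precisely that unit step inside (iii): turning the arithmetic identity $\delta=\gcd\DeltaF{\LLc}$ — with its finite $\mathbf Z$-linear witness — into an honest pair of elements of a single member of $\LLc$ at distance exactly $\delta$. That is where Proposition \ref{prop:generalized-deltas} together with the subadditivity of $\LLc$ do the real work; everything else is bookkeeping with sumsets, and the positivity constraint $\ell\in\mathbf N^+$ is absorbed into the base case since the starting point only grows along the induction.
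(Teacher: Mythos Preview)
Your proof is correct; items \ref{it:cor:delta_sets(i)} and \ref{it:cor:delta_sets(ii)} are identical to the paper's argument. For \ref{it:cor:delta_sets(iii)} the paper proceeds more directly: since $\delta=\inf\Delta(\LLc)$ and $\Delta(\LLc)$ is a non-empty set of positive integers, $\delta$ is actually a \emph{minimum}, so $\delta\in\Delta(\LLc)$ and one immediately has $\ell'\in\mathbf N$, $L'\in\LLc$ with $\{\ell',\ell'+\delta\}\subseteq L'$; then the $(q+1)$-fold sumset $(q+1)\{\ell',\ell'+\delta\}=(q+1)\ell'+\delta\cdot\llb 0,q+1\rrb$ sits inside some $L\in\LLc$ by subadditivity, and shifting by $\delta$ gives the required $\ell\in\mathbf N^+$. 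Your induction is essentially the same sumset computation unrolled step by step, so nothing is lost; but your ``unit step'' takes an unnecessary detour through the B\'ezout-style argument of Proposition~\ref{prop:generalized-deltas}, when the single observation $\delta\in\Delta(\LLc)$ already hands you $\{u,u+\delta\}\subseteq L'$ for free.
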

\begin{proof}
\ref{it:cor:delta_sets(i)} Let $L \in \LLc \setminus \{\emptyset\}$ and $x, y \in L$ (observe that $\LLc \not\subseteq \{\emptyset\}$, because, by hypothesis, $\Delta(\LLc) \ne \emptyset$). If $x=y$, the claim is obvious. Otherwise, there are $x_1, \ldots, x_n \in \mathbf N$ such that $x = x_1 < \cdots < x_n = y$ and $L \cap \llb x, y \rrb = \{x_1, \ldots, x_n\}$, where without loss of generality we assume $x < y$. It follows $x_{i+1} - x_i \in \Delta(L)$ for each $i \in \llb 1, n-1 \rrb$ (note that $n \ge 2$), which implies by Proposition \ref{prop:generalized-deltas} that $\delta \mid x_{i+1} - x_i$. So $\delta \mid y-x$, since $y - x = (x_n - x_{n-1}) + \cdots + (x_2 - x_1)$.

\ref{it:cor:delta_sets(ii)} Let $k \in \mathbf N$ such that $\UUc_k \ne \emptyset$, and pick $x, y \in \UUc_k$. Then there exist $L_x, L_y \in \LLc$ with $\{k, x\} \subseteq L_x$ and $\{k, y\} \subseteq L_y$, and we obtain from \ref{it:cor:delta_sets(i)} that $\delta \mid x - k$ and $\delta \mid y - k$. This yields $\delta \mid y - x$.

\ref{it:cor:delta_sets(iii)} Pick $q \in \mathbf N$. Since $\delta \in \Delta(\LLc) \ne \emptyset$, there are $\ell^{\fixed[0.2]{\text{ }}\prime} \in \mathbf N$ and $L^\prime \in \mathscr L$ such that $\{\ell^{\fixed[0.2]{\text{ }}\prime}, \ell^{\fixed[0.2]{\text{ }}\prime} + \delta\} \subseteq L^\prime$. Using that $\LLc$ is a subadditive family, we obtain
$$
(q+1)\ell^{\fixed[0.2]{\text{ }}\prime} + \delta \cdot \llb 0, q+1 \rrb = (q+1) \fixed[0.2]{\text{ }} \{\ell^{\fixed[0.2]{\text{ }}\prime}, \ell^{\fixed[0.2]{\text{ }}\prime} + \delta\} \subseteq (q+1)L^\prime \subseteq L,
$$
for some $L \in \LLc$. So $\ell + \delta \cdot \llb 0, q \rrb \subseteq L \subseteq \UUc_{\ell}$, where $\ell := (q+1)\ell^{\fixed[0.2]{\text{ }}\prime} + \delta \in \mathbf N^+$.
\end{proof}
We continue with a couple of lemmas, the first of which is essentially a revision of \cite[Lemma 2.4]{FGKT}.
\begin{lemma}
\label{prop:basic(1)}
Let $\mathscr{L} \subseteq \mathcal P(\mathbf N)$ be a subadditive family. The following hold:
\begin{enumerate}[label={\rm (\roman{*})}]
\item\label{it:prop:basic(1)(i)} Given $h, k \in \mathbf N$, we have $h \in \UUc_k$ if and only if $k \in \UUc_h$.
\item\label{it:prop:basic(1)(ii)} $\DeltaF{\LLc} = \emptyset$ if and only if $\mathscr U_k \subseteq \{k\}$ for all $k \in \mathbf N$.
\item\label{it:prop:basic(1)(iii)} $\rho_k = \infty$, for some $k \in \mathbf N$, if and only if $\rho_{k,i} = \infty$ for all $i \in \mathbf N^+$.
\item\label{it:prop:basic(1)(iv)} $\UUc_{h,i} + \UUc_{k,j} \subseteq \UUc_{h+k,i+j-1}$ for all $h, k \in \mathbf N$ and $i, j \in \mathbf N^+$.
\item\label{it:prop:basic(1)(v)}
$\lambda_{h+k,i+j-1} \le \lambda_{h,i} + \lambda_{k,j} \le \rho_{h,i} + \rho_{k,j} \le \rho_{h+k,i+j-1}$ for all $h,k \in \mathbf N$ and $i,j \in \mathbf N^+$ such that $\UUc_{h,i}$ and $\UUc_{k,j}$ are non-empty.
\end{enumerate}
\end{lemma}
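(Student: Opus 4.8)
The plan is to settle parts \ref{it:prop:basic(1)(i)}--\ref{it:prop:basic(1)(iii)} by unwinding the definitions, to prove the core assertion \ref{it:prop:basic(1)(iv)} by induction on $i+j$, and then to read off \ref{it:prop:basic(1)(v)} from \ref{it:prop:basic(1)(iv)}.

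For \ref{it:prop:basic(1)(i)}: by definition $h \in \mathscr U_k$ means there is $L \in \mathscr L$ with $\{h, k\} \subseteq L$, a condition symmetric in $h$ and $k$, so $h \in \mathscr U_k$ iff $k \in \mathscr U_h$. For \ref{it:prop:basic(1)(ii)}: the key point is that $\Delta(\mathscr L) \ne \emptyset$ holds exactly when some $L \in \mathscr L$ contains two distinct integers $a < b$ --- a witness $d \in \Delta(L)$ obviously produces such a pair, and conversely, writing $L \cap \llb a, b \rrb$ in increasing order, the gap between its first two elements lies in $\Delta(L)$ --- and, since $\mathscr U_k = \bigcup \{L \in \mathscr L : k \in L\}$, this condition is precisely the negation of ``$\mathscr U_k \subseteq \{k\}$ for every $k \in \mathbf N$''. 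For \ref{it:prop:basic(1)(iii)}: one implication is immediate from $\rho_k = \rho_{k,1}$; for the other, if $\rho_{k,i} = \infty$ then $\mathscr U_{k,i}$ is an infinite subset of $\mathbf N$, so --- the symbol $\infty = \rho_{k,i}$ not being an element of $\mathbf N$ --- passing from $\mathscr U_{k,i}$ to $\mathscr U_{k,i+1}$ removes at most the single element $\lambda_{k,i}$, leaving an infinite, hence unbounded, set; thus $\rho_{k,i+1} = \infty$, and an induction on $i$ concludes.

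The substance of the lemma is \ref{it:prop:basic(1)(iv)}, which I would prove by induction on $i+j$. The base case $i = j = 1$ is exactly subadditivity: given $x \in \mathscr U_h$ and $y \in \mathscr U_k$, pick $L_1, L_2 \in \mathscr L$ with $\{h, x\} \subseteq L_1$ and $\{k, y\} \subseteq L_2$, then $L \in \mathscr L$ with $L_1 + L_2 \subseteq L$; since $\{h + k, x + y\} \subseteq L$, we obtain $x + y \in \mathscr U_{h+k}$. In the inductive step we have $i + j \ge 3$, so, after interchanging $(h, i)$ and $(k, j)$ if necessary, we may assume $i \ge 2$; we may also assume $\mathscr U_{h,i}$ and $\mathscr U_{k,j}$ are both non-empty, else the sumset is empty and there is nothing to prove. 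The crucial observation is that every $x \in \mathscr U_{h,i} = \mathscr U_{h,i-1} \setminus \{\lambda_{h,i-1}, \rho_{h,i-1}\}$ is strictly squeezed between two elements of $\mathscr U_{h,i-1}$: indeed $\lambda_{h,i-1} \in \mathscr U_{h,i-1}$ with $\lambda_{h,i-1} < x$, while either $\rho_{h,i-1} \in \mathscr U_{h,i-1}$ with $x < \rho_{h,i-1}$ (when $\mathscr U_{h,i-1}$ is bounded), or $\mathscr U_{h,i-1}$ is unbounded and so contains some element exceeding $x$. Now fix $x \in \mathscr U_{h,i}$ and $y \in \mathscr U_{k,j}$, and choose $a, b \in \mathscr U_{h,i-1}$ with $a < x < b$. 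Applying the inductive hypothesis to $(i-1, j)$ yields $a + y, \, x + y, \, b + y \in \mathscr U_{h+k, \, i+j-2}$, and since $a + y < x + y < b + y$, the element $x + y$ is neither the infimum nor the supremum of $\mathscr U_{h+k, i+j-2}$; hence $x + y \in \mathscr U_{h+k, i+j-2} \setminus \{\lambda_{h+k, i+j-2}, \rho_{h+k, i+j-2}\} = \mathscr U_{h+k, i+j-1}$, as desired. All the real content of the lemma sits in this induction, and I expect the only fussy point to be the bookkeeping of the degenerate (empty or unbounded) cases when passing between $\mathscr U_{k,i}$ and $\mathscr U_{k,i+1}$; there is no genuine obstacle.

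Finally, \ref{it:prop:basic(1)(v)} follows from \ref{it:prop:basic(1)(iv)} under the stated non-emptiness hypothesis. The middle inequality is just $\inf \le \sup$ for each of the two sets, summed. Since a non-empty subset of $\mathbf N$ attains its infimum, $\lambda_{h,i} \in \mathscr U_{h,i}$ and $\lambda_{k,j} \in \mathscr U_{k,j}$, so $\lambda_{h,i} + \lambda_{k,j} \in \mathscr U_{h+k, i+j-1}$ by \ref{it:prop:basic(1)(iv)}, whence $\lambda_{h+k, i+j-1} \le \lambda_{h,i} + \lambda_{k,j}$. For the last inequality, \ref{it:prop:basic(1)(iv)} gives $x + y \le \rho_{h+k, i+j-1}$ for all $x \in \mathscr U_{h,i}$ and $y \in \mathscr U_{k,j}$; taking the supremum over $x$ and $y$, and using that $\sup(X + Y) = \sup X + \sup Y$ for non-empty $X, Y \subseteq \mathbf N$ (which remains valid when a supremum equals $\infty$), gives $\rho_{h,i} + \rho_{k,j} \le \rho_{h+k, i+j-1}$.
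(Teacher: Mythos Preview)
Your proof is correct. Parts \ref{it:prop:basic(1)(i)}, \ref{it:prop:basic(1)(ii)}, \ref{it:prop:basic(1)(iii)}, and \ref{it:prop:basic(1)(v)} are essentially identical to the paper's arguments: unwinding definitions and reading off inequalities from the inclusion in \ref{it:prop:basic(1)(iv)}.

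For \ref{it:prop:basic(1)(iv)}, however, you take a genuinely different route. The paper does not induct on $i+j$; after the base case $\UUc_h + \UUc_k \subseteq \UUc_{h+k}$, it argues directly: given $r \in \UUc_{h,i}$ and $s \in \UUc_{k,j}$, it exhibits the explicit chain
\[
\lambda_{h,1}+\lambda_{k,1} < \cdots < \lambda_{h,i}+\lambda_{k,1} < \cdots < \lambda_{h,i}+\lambda_{k,j} \le r+s
\]
(and the mirror chain with $\rho$'s above $r+s$), each term lying in $\UUc_{h+k}$ by the base case, which forces $|\UUc_{h+k} \cap \llb 0, r+s\rrb| \ge i+j-1$ and $|\UUc_{h+k} \cap \llb r+s, \infty\rrb| \ge i+j-1$, hence $r+s \in \UUc_{h+k,i+j-1}$. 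Your induction replaces this explicit count by a single strict squeeze $a+y < x+y < b+y$ inside $\UUc_{h+k,i+j-2}$. The trade-off: the paper's direct argument is marginally more informative (it locates $i+j-1$ concrete witnesses on each side, information one could conceivably reuse), but incurs some careful bookkeeping over which inequalities in the chains are strict and what happens when $\rho_h$ or $\rho_k$ is infinite; your induction hides all of that in one clean step and handles the unbounded case more transparently.
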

\begin{proof}
\ref{it:prop:basic(1)(i)} If $h \in \mathscr U_k$, then $h \in L$ for some $L \in \mathscr L$ with $k \in L$, so $k \in \mathscr U_h$ and we are done (by symmetry).

\ref{it:prop:basic(1)(ii)}
$\DeltaF{\LLc} \ne \emptyset$ if and only if $\Delta(L) \ne \emptyset$ for some $L \in \LLc$, i.e., if and only if there exist $\ell \in \mathbf N$, $d \in \NNb^+$ and $L \in \LLc$ with $\{\ell, \ell+d\} \subseteq L$, which implies $\{\ell, \ell + d\} \subseteq \UUc_\ell \not\subseteq \{\ell\}$. Conversely, if $\UUc_k \subsetneq \{k\}$ for some $k$, then there are $h \in \mathbf N$ and $L \in \mathscr L$ with $h \ne k$ and $\{h,k\} \subseteq L$, whence $\emptyset \ne \Delta(L) \subseteq \Delta(\LLc)$.

\ref{it:prop:basic(1)(iii)} The ``if'' part is obvious, so let $k \in \mathbf N$ such that $\rho_k = \infty$. Then $\UUc_k$ is an infinite subset of $\mathbf N$, and, hence, so are $\mathscr U_{k,1}, \UUc_{k,2}, \ldots \fixed[-0.2]{\text{ }}$, because $\UUc_{k,i+1} =  \UUc_k \setminus \{\lambda_{k,1}, \rho_{k,1}, \ldots, \lambda_{k,i}, \rho_{k,i}\} = \UUc_k \setminus \{\lambda_{k,1}, \ldots, \lambda_{k,i}\}$ for all $i \in \mathbf N^+$. Therefore, it is clear that $\rho_{k,1} = \rho_{k,2} = \cdots = \infty$.

\ref{it:prop:basic(1)(iv)} Fix $h,k \in \mathbf N$. First, we prove $\UUc_h + \UUc_k \subseteq \UUc_{h+k}$. This is trivial if $\UUc_h$ or $\UUc_k$ is empty. Otherwise, let $r \in \UUc_h$ and $s \in \UUc_k$. Then $\{r,h\} \subseteq L_1$ and $\{s,k\} \subseteq L_2$ for some $L_1, L_2 \in \LLc$, and since $\LLc$ is subadditive, there is $L \in \LLc$ with $\{r+s, h+k\} \subseteq L_1 + L_2 \subseteq L$. So $r+s \in \UUc_{h+k}$, viz., $\mathscr U_h + \mathscr U_k \subseteq \mathscr U_{h+k}$.

Now, pick $i,j \in \mathbf N^+$. We have to show $\UUc_{h,i} + \UUc_{k,j} \subseteq \UUc_{h+k,i+j-1}$. If $\UUc_{h,i}$ or $\UUc_{k,j}$ is empty, we are done. Otherwise, let $r \in \mathscr U_{h,i}$ and $s \in \mathscr U_{k,j}$: It is sufficient to check that $r+s \in \mathscr U_{h+k,i+j-1}$.
To this end, we infer from the definition of $\UUc_{h,i}$ and $\UUc_{k,j}$ that
\begin{equation*}
\label{equ:chains-of-inequalities}
\underbrace{\lambda_{h,1} \le \cdots \le \lambda_{h,i}}_{({\rm a})} \le r \le \underbrace{\rho_{h,i} \le \cdots \le \rho_{h,1}}_{({\rm b})}
\quad\text{and}\quad
\underbrace{\lambda_{k,1} \le \cdots \le \lambda_{k,j}}_{({\rm c})} \le s \le \underbrace{\rho_{k,j} \le \cdots \le \rho_{k,1}}_{({\rm d})},
\end{equation*}
where, as implied by \ref{it:prop:basic(1)(iii)}, the inequalities labeled by (a) (respectively, by (c)) are strict if and only if $i \ge 2$ (respectively, $j \ge 2$), and the inequalities labeled by (b) (respectively, by (d)) are strict if and only if $i \ge 2$ and $\rho_h < \infty$ (respectively, $j \ge 2$ and $\rho_k < \infty$). So, it is straightforward that, on the one hand,
\begin{equation}
\label{equ:adding-up-a-chain-of-local-lower-elasticities-lower}
\rlap{$\overbrace{\phantom{\lambda_{h,1} + \lambda_{k,1} \le \cdots \le \lambda_{h,i} + \lambda_{k,1}}}^{({\rm A})}$} \lambda_{h,1} + \lambda_{k,1} \le \cdots \le \underbrace{\lambda_{h,i} + \lambda_{k,1} \le \cdots \le \lambda_{h,i} + \lambda_{k,j}}_{({\rm B})} \le r+s,
\end{equation}
and on the other hand,
\begin{equation}
\label{equ:adding-up-a-chain-of-local-upper-elasticities-upper}
r+s \le \rlap{$\overbrace{\phantom{\rho_{h,i} + \rho_{k,j} \le \cdots \le \rho_{h,1} + \rho_{k,j}}}^{({\rm C})}$}
\rho_{h,i} + \rho_{k,j} \le \cdots \le \underbrace{\rho_{h,1} + \rho_{k,j} \le \cdots \le \rho_{h,1} + \rho_{k,1}}_{({\rm D})},
\end{equation}
with the inequalities labeled by (A) (respectively, by (B)) being strict if and only if $i \ge 2$ (respectively, $j \ge 2$), and the inequalities labeled by (C) (respectively, by (D)) being strict if and only if $i \ge 2$ (respectively, $j \ge 2$) and $\max(\rho_h, \rho_k) < \infty$. Moreover, if $\rho_h < \infty$ then $\lambda_{h,1}, \rho_{h,1}, \ldots, \lambda_{h,i},\rho_{h,i} \in \UUc_h$; and in a similar way, if $\rho_k < \infty$ then $\lambda_{k,1}, \rho_{k,1}, \ldots, \lambda_{k,j},\rho_{k,j} \in \UUc_k$.

So, putting it all together and using that $\UUc_h + \UUc_k \subseteq \UUc_{h+k}$, we infer from \eqref{equ:adding-up-a-chain-of-local-lower-elasticities-lower}, \eqref{equ:adding-up-a-chain-of-local-upper-elasticities-upper}, and \ref{it:prop:basic(1)(iii)} that
$$
r+s \in \UUc_{h+k},
\quad
\bigl|\UUc_{h+k} \cap \llb 0, r+s \rrb\bigr| \ge i+j-1,
\quad\text{and}\quad
\bigl|\UUc_{h+k} \cap \llb r+s, \infty \rrb\bigr| \ge i+j-1,
$$
which yields $r+s \in \UUc_{h+k,i+j-1}$, and hence $\UUc_{h,i} + \UUc_{k,j} \subseteq \UUc_{h+k,i+j-1}$.

\ref{it:prop:basic(1)(v)} Assume that $\UUc_{h,i}$ and $\UUc_{k,j}$ are non-empty for some $h, k \in \mathbf N$ and $i,j \in \mathbf N^+$. Then it is obvious that $\lambda_{h,i} \le \rho_{h,i}$ and $\lambda_{k,j} \le \rho_{k,j}$. On the other hand, we obtain from \ref{it:prop:basic(1)(iv)} that $\UUc_{h,i} + \UUc_{k,j} \subseteq \UUc_{h+k,i+j-1} \ne \emptyset$, and this implies $\lambda_{h+k,i+j-1} \le \lambda_{h,i} + \lambda_{k,j}$ and $\rho_{h,i} + \rho_{k,j} \le \rho_{h+k,i+j-1}$.
\end{proof}
\begin{lemma}
\label{lem:non-emptyness-of-multiples}
Suppose $\mathscr{L} \subseteq \mathcal P(\mathbf N)$ is a subadditive family. The following hold:
\begin{enumerate}[label={\rm (\roman{*})}]
\item\label{it:lem:non-emptyness-of-multiples(i)} $\UUc_k = \emptyset$ for every $k \notin \wp \cdot \NNb^+$.
\item\label{it:lem:non-emptyness-of-multiples(ii)} If $\wp \ne 0$, then there exists $k_0 \in \mathbf N$ such that $\UUc_{\wp k} \ne \emptyset$ for all $k \ge k_0$.
\item\label{it:lem:non-emptyness-of-multiples(iiiq)}If $\rho_\kappa = \infty$ for some $\kappa \in \mathbf N$, then $\rho_{\wp k} = \infty$ for all but finitely many $k$.
\item\label{it:lem:non-emptyness-of-multiples(iii)} $\wp \mid \gcd \DeltaF{\LLc}$.
\item\label{it:lem:non-emptyness-of-multiples(iv)} Pick $i \in \mathbf N^+$, and assume $\DeltaF{\LLc}$ is non-empty. Then $\UUc_{\wp k,i} \ne \emptyset$, and hence $\lambda_{\wp k,1} \le \cdots \le \lambda_{\wp k,i} \le \wp k \le \rho_{\wp k,i} \le \cdots \le \rho_{\wp k,1}$, for all large $k \in \mathbf N$.
\end{enumerate}
\end{lemma}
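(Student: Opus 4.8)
The plan is to read (i) and (iv) straight off the definition of $\wp$ as a greatest common divisor, to prove (ii) by a Frobenius-type argument, and then to deduce (iii) and (v) by feeding (ii) into the additive inequalities of Lemma~\ref{prop:basic(1)} together with the ``seed'' arithmetic progression supplied by Corollary~\ref{cor:delta_sets}\ref{it:cor:delta_sets(iii)}. For (i): if $\UUc_k \ne \emptyset$ and $k \ge 1$, pick $L \in \LLc$ with $k \in L$; then $k \in L^+$, so $\wp \mid k$, whence $k \in \wp \cdot \NNb^+$ (note this also forces $\wp \ne 0$) --- contrapositively, $\UUc_k$ is empty whenever $k \ge 1$ fails to be a multiple of $\wp$. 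For (iv): a typical $d \in \DeltaF{\LLc}$ comes with $\ell \in \NNb$ and $L \in \LLc$ such that $\{\ell, \ell+d\} \subseteq L$; since $\wp$ divides every positive element of $L$ and $\ell, \ell+d$ differ by $d \ge 1$, we get $\wp \mid d$ (both $\ell, \ell+d$ lie in $L^+$ when $\ell \ge 1$, while $d \in L^+$ when $\ell = 0$). Thus $\wp$ divides every member of $\DeltaF{\LLc}$, and so $\wp \mid \gcd \DeltaF{\LLc}$.

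For (ii): assuming $\wp \ne 0$, the set $S := \bigcup\{L^+ : L \in \LLc\}$ is non-empty, and its greatest common divisor $\wp$ is already attained on a finite subset (a strictly decreasing chain of positive integers is finite), so fix $a_1, \ldots, a_r \in S$ with $\gcd(a_1, \ldots, a_r) = \wp$; write $a_s = \wp b_s$, so that $\gcd(b_1, \ldots, b_r) = 1$, and choose $L_s \in \LLc$ with $a_s \in L_s$. By the Frobenius / Chicken-McNugget theorem there is $k_0 \in \NNb$ such that every $n \ge k_0$ is a non-negative integer combination $n = \sum_s c_s b_s$; then $\wp n = \sum_s c_s a_s$, and applying subadditivity iteratively to the list formed by $c_s$ copies of $L_s$ for each $s$ with $c_s \ge 1$ (there being at least one such $s$, as $n \ge 1$) produces $L \in \LLc$ with $\sum_s c_s L_s \subseteq L$; since $\wp n \in \sum_s c_s L_s \subseteq L$, we conclude $\wp n \in \UUc_{\wp n}$, i.e.\ $\UUc_{\wp k} \ne \emptyset$ for all $k \ge k_0$.

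For (iii): $\rho_\kappa = \infty$ forces $\UUc_\kappa \ne \emptyset$, hence $\wp \ge 1$ and $\wp \mid \kappa$, say $\kappa = \wp c$; fixing (via (ii)) a $k_1$ with $\UUc_{\wp j} \ne \emptyset$ for all $j \ge k_1$, then for every $k \ge k_1 + c$ both $\UUc_{\wp(k-c)}$ and $\UUc_{\wp c} = \UUc_\kappa$ are non-empty, so Lemma~\ref{prop:basic(1)}\ref{it:prop:basic(1)(v)} gives $\rho_{\wp k} \ge \rho_{\wp(k-c)} + \rho_\kappa = \infty$; thus $\rho_{\wp k} = \infty$ for all but finitely many $k$. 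For (v): apply Corollary~\ref{cor:delta_sets}\ref{it:cor:delta_sets(iii)} with $q = 2(i-1)$ to obtain $\ell_0 \in \NNb^+$ and $L_0 \in \LLc$ with $B := \ell_0 + \delta \cdot \llb 0, 2(i-1) \rrb \subseteq L_0$, a progression of $2i-1$ terms; by (iv) and Proposition~\ref{prop:generalized-deltas} we have $\wp \mid \delta$ and $\wp \mid \ell_0$, so the centre $c := \ell_0 + (i-1)\delta$ of $B$ equals $\wp k_0$ for some $k_0 \in \NNb$. Using (ii), fix $k_1$ and, for each $j \ge k_1$, a set $L_j' \in \LLc$ with $\wp j \in L_j'$; subadditivity yields $M_j \in \LLc$ with $B + \{\wp j\} \subseteq L_0 + L_j' \subseteq M_j$, and since $B + \{\wp j\}$ is a $(2i-1)$-term progression whose centre $\wp(k_0+j)$ lies in $M_j$, the set $\UUc_{\wp(k_0+j)} \supseteq M_j$ has at least $i-1$ elements below and $i-1$ above $\wp(k_0+j)$; hence $\wp(k_0+j)$ is not removed when $\UUc_{\wp(k_0+j), i}$ is formed from $\UUc_{\wp(k_0+j)}$, so $\wp(k_0+j) \in \UUc_{\wp(k_0+j), i}$. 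This gives $\UUc_{\wp k, i} \ne \emptyset$ (indeed $\wp k \in \UUc_{\wp k, i}$) for all $k \ge k_0 + k_1$, and the displayed chain of inequalities then follows at once, since $\UUc_{\wp k, i} \subseteq \cdots \subseteq \UUc_{\wp k, 1}$ and replacing $\UUc_{\wp k, r}$ by $\UUc_{\wp k, r+1}$ can only raise the infimum and lower the supremum.

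I expect (ii) to be the main obstacle; everything else is either immediate from the definition of $\wp$ or a reduction to (ii) via Lemma~\ref{prop:basic(1)} and Corollary~\ref{cor:delta_sets}\ref{it:cor:delta_sets(iii)}. The point of (ii) is that, although $\wp$ is defined through a possibly infinite greatest common divisor, it is witnessed by finitely many elements $a_1, \ldots, a_r \in S$, after which a Frobenius argument writes every large multiple of $\wp$ as a non-negative combination of the $a_s$ and iterated subadditivity packages that combination inside a single member of $\LLc$. The one remaining delicate issue, in (v), is pure bookkeeping: the seed progression must be long enough ($q = 2(i-1)$, i.e.\ $2i-1$ terms) and one must check that its translate $B + \{\wp j\}$ is centred exactly at a multiple of $\wp$ --- which is precisely what $\wp \mid \delta$, i.e.\ (iv) combined with Proposition~\ref{prop:generalized-deltas}, guarantees.
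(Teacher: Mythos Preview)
Your proof is correct and follows essentially the same route as the paper's: (i) and (iv) straight from the definition of $\wp$, (ii) via a Frobenius argument, and (iii) and (v) by feeding (ii) into Lemma~\ref{prop:basic(1)} together with the arithmetic progression supplied by Corollary~\ref{cor:delta_sets}\ref{it:cor:delta_sets(iii)}. The only cosmetic differences are that for (ii) the paper first uses B\'ezout to reduce to two consecutive generators $\ell,\ell+1$ before invoking the two-variable Frobenius bound (rather than applying the multi-generator version directly as you do), and for (v) it takes an arithmetic progression of length $2i+1$ centred at $\wp k$ where your tighter $2i-1$ already suffices.
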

\begin{proof}
\ref{it:lem:non-emptyness-of-multiples(i)} If $\wp = 0$, then $L^+ = \emptyset$ for every $L \in \LLc$, and hence $\UUc_k = \emptyset$ for all $k \in \NNb^+$; so we are done, because $0 \mid k$, for some $k \in \NNb$, if and only if $k = 0$.
If, on the other hand, $\wp \ge 1$ and $\UUc_k \ne \emptyset$ for some $k \in \mathbf N^+$, then it is clear from our definitions that $\wp \mid \gcd(\UUc_k^+)$, and hence $\wp \mid k$.

\ref{it:lem:non-emptyness-of-multiples(ii)} We have by \cite[Theorem 1.4]{Nath} that $\wp = \varepsilon_1 m_1 k_1 + \cdots + \varepsilon_n m_n k_n$ for some $\varepsilon_1, \ldots, \varepsilon_n \in \{\pm 1\} \subseteq \mathbf Z$, $m_1, \ldots, m_n \in \NNb^+$, and $k_1, \ldots, k_n \in \bigcup \fixed[-0.2]{\text{ }} \{L^+: L \in \LLc\}$. Accordingly, put
$$
\ell := 1 + \frac{2}{\wp} (m_1 k_1 + \cdots + m_n k_n).
$$
Then $\ell \in \NNb^+$, since $\wp \mid k_i$ for every $i \in \llb 1, n \rrb$, and we find that
\begin{equation}
\label{equ:a-step-towards-frobenius}
\wp \fixed[0.2]{\text{ }} \ell = a_1 k_1 + \cdots + a_n k_n
\quad\text{and}\quad
\wp (\ell + 1) = b_1 k_1 + \cdots + b_n k_n,
\end{equation}
where $a_i := (2 + \varepsilon_i) m_i \in \mathbf N^+$ and $b_i := (2 + 2\varepsilon_i) m_i \in \NNb$ for $i \in \llb 1, n \rrb$.
Let $k \ge (\ell - 1) \ell + 1$.

By \cite[Theorem 1.7]{Nath}, there are $x, y \in \NNb$ with $x+y \ge 1$ such that $\ell x + (\ell + 1) y = k$. So, we derive from \eqref{equ:a-step-towards-frobenius} that $\wp k = \alpha_1 k_1 + \cdots + \alpha_n k_n \ge 1$, where $\alpha_i := a_i x + b_i y \in \mathbf N$ for each $i \in \llb 1, n \rrb$.

On the other hand, for every $i \in \llb 1, n \rrb$ there exists $L_i \in \LLc$ with $k_i \in L_i$, and since $\LLc$ is a subadditive family and at least one of $\alpha_1, \ldots, \alpha_n$ is positive, it follows $k \in \alpha_1 L_1 + \cdots + \alpha_n L_n \subseteq L$ for some $L \in \LLc$. This yields $\emptyset \ne L \subseteq \UUc_{\wp k}$ and proves the assertion of the lemma with $k_0 = (\ell - 1) \ell + 1$.

\ref{it:lem:non-emptyness-of-multiples(iiiq)} Assume that $\rho_\kappa = \infty$ for some $\kappa \in \mathbf N$. Then $\mathscr U_\kappa^+ \ne \emptyset$ and $0 \ne \wp \mid \kappa$, and it follows from \ref{it:lem:non-emptyness-of-multiples(ii)} that there exists $k_0 \in \mathbf N$ such that $\mathscr U_{\wp k} \ne \emptyset$ for all $k \ge k_0$. So, by Lemma \ref{prop:basic(1)}\ref{it:prop:basic(1)(v)}, $\rho_{\wp k} \ge \rho_{\wp k - \kappa} + \rho_\kappa \ge \infty$ for every $k \ge k_0 + \kappa$ (note that $\wp k - n$ is a multiple of $\wp$).

\ref{it:lem:non-emptyness-of-multiples(iii)} If $\DeltaF{\LLc} = \emptyset$, then $\gcd \Delta(\LLc) = 0$ (by our conventions), and the conclusion is trivial. Otherwise, we have by Proposition \ref{prop:generalized-deltas} that $\gcd \Delta(\LLc) = \delta \ge 1$, so there are $\ell \in \mathbf N$ and $L \in \LLc$ with $\{\ell, \ell + \delta\} \subseteq L$. Using that $\wp$ is the greatest common divisor of $\bigcup \fixed[-0.2]{\text{ }} \{L^+: L \in \LLc\}$, it follows that $\wp \mid \gcd(\ell, \ell + \delta)$, because $\ell + \delta \in L^+$ and, in addition, $\ell \in L^+$ unless $\ell = 0$. Thus, we have $\wp \mid \delta$.

\ref{it:lem:non-emptyness-of-multiples(iv)} Since $\DeltaF{\LLc}$ is non-empty and $\LLc$ is a subadditive family,
we obtain from Corollary \ref{cor:delta_sets}\ref{it:cor:delta_sets(iii)} that there exist $\ell \in \mathbf N^+$ and $L \in \LLc$ with $
\ell + \delta \cdot \llb 0, 2i \rrb \subseteq L$.

In particular, $\wp$ is a positive integer, and we get from \ref{it:lem:non-emptyness-of-multiples(ii)} that there is $k_0 \in \mathbf N$ such that $\mathscr U_{\wp k} \ne \emptyset$ for $k \ge k_0$. Moreover, we have by \ref{it:lem:non-emptyness-of-multiples(iii)} that $\wp \mid \delta$, while it is clear that $\wp \mid \ell$.

Accordingly, fix $k \ge k_0 + (\ell + i\delta)/\wp$. Then $k - (\ell + i\delta)/\wp$ is an integer $\ge k_0$,
and we infer from the above that $\UUc_{\wp k - (\ell + i\delta)} \ne \emptyset$ and $\ell + \delta \cdot \llb 0, 2i \rrb \subseteq L \subseteq \UUc_{\ell + i\delta}$, which, together with Lemma \ref{prop:basic(1)}\ref{it:prop:basic(1)(iv)}, implies
$$
\UUc_{\wp k} \supseteq \mathscr U_{\wp k - (\ell+i\delta)} + \UUc_{\ell + i\delta} \supseteq \wp k - (\ell + i\delta) + \fixed[-0.2]{\text{ }} \bigl(\ell + \delta \cdot \llb 0, 2i \rrb\bigr) \fixed[-0.2]{\text{ }} = \wp k + \delta \cdot \llb -i, i \rrb.
$$
This proves $\bigl|\UUc_{\wp k} \cap \llb 0, \wp k-1 \rrb\bigr| \ge i$ and $\bigl|\UUc_{\wp k} \cap \llb \wp k+1, \infty \rrb\bigr| \ge i$, whence we conclude $\UUc_{\wp k,i} \ne \emptyset$.
\end{proof}
\begin{remark}
\label{rem:primitiveness}
Let $\LLc \subseteq \mathcal P(\mathbf N)$ be a subadditive family. If $\wp(\mathscr L) = 0$, then we have already observed that $\LLc \subseteq \fixed[-0.2]{\text{ }} \bigl\{\emptyset, \{0\}\bigr\}$, and hence $\UUc_k(\mathscr L) = \emptyset$ for all $k \ge 1$. Otherwise, $\wp(\mathscr L)$ is a positive integer and
$$
\mathscr L^\ast := \{\wp(\mathscr L)^{-1} \cdot L: L \in \mathscr L\} \subseteq \mathcal P(\mathbf N)
$$
is also a subadditive family, but with $\wp(\LLc^\ast) = 1$. Since $\UUc_{\wp k}(\LLc) = \wp(\LLc) \cdot \UUc_k(\LLc^\ast)$ for all $k \in \mathbf N$ and, by Lemma \ref{lem:non-emptyness-of-multiples}\ref{it:lem:non-emptyness-of-multiples(i)}, $\UUc_k(\LLc) = \emptyset$ for every $k \notin \wp(\LLc) \cdot \mathbf N$, it follows that, when it comes to structural properties of unions for subadditive families, we can restrict our attention to the ``primitive case'', which is what we will usually do in the remainder of the section.
\end{remark}
The next step is to generalize \cite[Propositions 2.7 and 2.8]{FGKT} from directed to subadditive families:
In fact, our generalization of \cite[Proposition 2.7]{FGKT} is partial, but still sufficient for the goals of the paper.
\begin{lemma}
\label{lem:inequalities_with_elasticities}
Let $\LLc \subseteq \mathcal P(\mathbf N)$ be a subadditive family. The following hold:
\begin{enumerate}[label={\rm (\roman{*})}]
\item\label{it:lem:inequalities_with_elasticities(i)} If $\rho = 0$, then $\mathscr L \subseteq \fixed[-0.15]{\text{ }}\bigl\{\emptyset,\{0\}\bigr\}$, and hence $\rho_k = \rho = 0$ and $\lambda_k = \lambda = \infty$ for all $k \in \mathbf N^+$.
\item\label{it:lem:inequalities_with_elasticities(ii)}  If $\rho < \infty$, then there does not exist any set $L \in \mathscr L$ with $0 \in L$ and $|L| \ge 2$.
\item\label{it:lem:inequalities_with_elasticities(iii)} $k \rho \ge \rho_k$ and $k \lambda \le \lambda_k$ for all $k \in \mathbf N^+$.
\item\label{it:lem:inequalities_with_elasticities(iv)} Assume that $0 < \rho < \infty$ and there are $n \in \mathbf N^+$ and $L \in \mathscr L$ such that $n \rho \le \sup L$ and $\inf L \le n$. Then $\sup L = n \rho$, $\inf L = n$, and $\rho(L) = \rho$ \textup{(}i.e., $\mathscr L$ has accepted elasticity\textup{)}.
\end{enumerate}
\end{lemma}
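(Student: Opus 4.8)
The plan is to handle the four items in sequence, using subadditivity in an essential way only for \ref{it:lem:inequalities_with_elasticities(ii)} and otherwise just manipulating the defining inequality $\rho(L)\le\rho$ together with the conventions on $\sup\emptyset$, $\inf\emptyset$, and the relevant quotients. For \ref{it:lem:inequalities_with_elasticities(i)}: if $\rho=0$ then $\rho(L)=0$ for every $L\in\mathscr L$; since $\rho(L)=\sup L/\inf L^+\ge1$ whenever $L^+\ne\emptyset$, this forces $\sup L=0$, i.e.\ $L\in\{\emptyset,\{0\}\}$, so no member of $\mathscr L$ contains any $k\ge1$, whence $\mathscr U_k=\emptyset$ and $\rho_k=\sup\emptyset=0$, $\lambda_k=\inf\emptyset=\infty$, while $\lambda=1/\rho=\infty$. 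For \ref{it:lem:inequalities_with_elasticities(ii)}, I would argue by contradiction: if $0\in L$ and $|L|\ge2$ for some $L\in\mathscr L$, pick $b\in L^+$; iterating subadditivity gives, for each $n\in\mathbf N^+$, a set $L_n\in\mathscr L$ with $nL\subseteq L_n$, and from $0\in L$ one gets $b\in nL$ (take $b$ once, $0$ the other $n-1$ times) and $nb\in nL$, so $\{b,nb\}\subseteq L_n$ and $\rho(L_n)\ge nb/b=n$; letting $n\to\infty$ contradicts $\rho<\infty$.

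For \ref{it:lem:inequalities_with_elasticities(iii)}, the cases $\rho=\infty$ and $\rho=0$ are immediate (the latter by \ref{it:lem:inequalities_with_elasticities(i)}), so assume $0<\rho<\infty$. If $\mathscr U_k=\emptyset$ the two inequalities read $0\le k\rho$ and $\infty\ge k\lambda$; otherwise pick $x\in\mathscr U_k$, say $x\in L$ with $k\in L$. Since $k\ge1$ we have $\inf L^+\le k\le\sup L$, and by \ref{it:lem:inequalities_with_elasticities(ii)} $0\notin L$, so $x\ge1$ and $\inf L^+\le x$; therefore $\rho\ge\rho(L)=\sup L/\inf L^+$ is both $\ge x/k$ and $\ge k/x$, giving $x\le k\rho$ and $x\ge k/\rho=k\lambda$. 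Taking $\sup$ and $\inf$ over $x\in\mathscr U_k$ yields $\rho_k\le k\rho$ and $\lambda_k\ge k\lambda$.

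For \ref{it:lem:inequalities_with_elasticities(iv)}, I would first discard the degenerate cases: $L$ is finite (an infinite $L$ with $L^+\ne\emptyset$ has $\rho(L)=\infty$, contradicting $\rho<\infty$ since $\rho\ge\rho(L)$, while $L^+=\emptyset$ already makes $L$ finite) and $0\notin L$ (if $0\in L$ then, since $\sup L\ge n\rho>0$ supplies a second element, $|L|\ge2$, contradicting \ref{it:lem:inequalities_with_elasticities(ii)}), so $\inf L=\inf L^+\ge1$. Then $\rho(L)\le\rho$ gives $\sup L\le\rho\inf L\le\rho n$, which with the hypothesis $n\rho\le\sup L$ forces $\sup L=n\rho$; feeding this back into $n\rho=\sup L\le\rho\inf L$ and dividing by $\rho>0$ gives $n\le\inf L\le n$, hence $\inf L=n$ and $\rho(L)=n\rho/n=\rho$, so $\mathscr L$ has accepted elasticity since $L\in\mathscr L$ and $\rho(L)=\rho<\infty$. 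The only step doing real work is \ref{it:lem:inequalities_with_elasticities(ii)}, where subadditivity is genuinely used and which \ref{it:lem:inequalities_with_elasticities(iii)} and \ref{it:lem:inequalities_with_elasticities(iv)} both invoke to exclude $0\in L$; I expect it to be the (modest) crux.
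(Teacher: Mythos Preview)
Your proof is correct and follows essentially the same approach as the paper's: the same contradiction via iterated sumsets for \ref{it:lem:inequalities_with_elasticities(ii)}, the same use of $\rho(L)\le\rho$ for the key inequalities, and the same reliance on \ref{it:lem:inequalities_with_elasticities(ii)} to rule out $0\in L$ in \ref{it:lem:inequalities_with_elasticities(iii)} and \ref{it:lem:inequalities_with_elasticities(iv)}. The only noteworthy variation is in the lower-elasticity half of \ref{it:lem:inequalities_with_elasticities(iii)}: you argue symmetrically by observing $\rho(L)\ge k/x$ directly, whereas the paper derives $k\lambda\le\lambda_k$ by the duality $k\in\mathscr U_{\lambda_k}$ and then applies the already-established upper bound $\rho_{\lambda_k}\le\lambda_k\rho$; both are fine, and your version is arguably more transparent.
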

\begin{proof}
\ref{it:lem:inequalities_with_elasticities(i)} This is trivial by our definitions (in particular, recall that $\lambda := 1/\rho$ and $1/0 := \infty$).

\ref{it:lem:inequalities_with_elasticities(ii)} Suppose to the contrary that there exists $L \in \mathscr L$ with $0 \in L$ and $|L| \ge 2$, and set $\ell := \inf L^+$. Since $L^+ \ne \emptyset$, $\ell$ is an integer $\ge 1$, and it follows from $\mathscr L$ being subadditive that, for each $k \in \mathbf N^+$, there is $L_k \in \mathscr L$ with $\ell \cdot \llb 0,k \rrb = k \fixed[0.2]{\text{ }} \{0, \ell\} \subseteq kL \subseteq L_k$, with the result that $\rho \ge \sup L_k/\inf L_k^+ \ge k$, and hence $\rho = \infty$ (a contradiction).

\ref{it:lem:inequalities_with_elasticities(iii)} The claim is obvious if $\rho = \infty$ (or equivalently, $\lambda = 0$), and it is trivial for every $k \in \mathbf N^+$ for which $\UUc_k = \emptyset$, because this implies, according to our conventions, that $\rho_k = 0$ and $\lambda_k = \infty$.
So we can assume from here on that $\rho < \infty$ and restrict attention to the indices $k \in \mathbf N^+$ such that $\UUc_k \ne \emptyset$.

Based on these premises, we first prove the claim for the upper elasticities, and then we use it for the ``dual statement'' about the lower elasticities:

\textsc{Part 1:} Let $k \in \mathbf N^+$ such that $\UUc_k \ne \emptyset$. Then $\mathscr L_k := \{L \in \mathscr L: k \in L\}$ is a non-empty subfamily of $\mathcal P(\mathbf N)$, and we get from \ref{it:lem:inequalities_with_elasticities(ii)} that, for every $L \in \mathscr L_k$, $\inf L$ is a \textit{positive} integer $\le k$. To wit,
\begin{equation}
\label{equ:bounds_of_the_ration_sup_L/k}
\frac{\sup L}{k} \le \frac{\sup L}{\inf L} = \rho(L) \le \rho < \infty, \ \text{for all }L \in \mathscr L_k.
\end{equation}
In particular, $\rho_k = \sup L^\ast < \infty$ for some $L^\ast \in \mathscr L_k$, which, together with \eqref{equ:bounds_of_the_ration_sup_L/k}, yields $\rho_k \le k \rho$.

\textsc{Part 2:} Again, let $k \in \mathbf N^+$ such that $\UUc_k \ne \emptyset$.
By \ref{it:lem:inequalities_with_elasticities(ii)} and Lemma \ref{prop:basic(1)}\ref{it:prop:basic(1)(i)}, $\lambda_k \in \mathbf N^+$ and $k \in \UUc_{\lambda_k} \ne \emptyset$. So, it follows by the previous part that $\lambda_k \rho \ge \rho_{\lambda_k} \ge k \ge 1$, i.e., $k\lambda \le \lambda_k$.

\ref{it:lem:inequalities_with_elasticities(iv)} Suppose to the contrary that $n \rho < \sup L$ or $\inf L < n$. Since $\rho < \infty$ and $0 < n\rho \le \sup L$, we get from \ref{it:lem:inequalities_with_elasticities(ii)} that $L = L^+ \ne \emptyset$. Hence $\rho(L) = \sup L/\inf L > n \rho/n = \rho$, which is, however, impossible.
\end{proof}
Incidentally, Lemma \ref{lem:inequalities_with_elasticities}\ref{it:lem:inequalities_with_elasticities(ii)} refines \cite[Lemma 2.13(1)]{FGKT} and simplifies the proof of \cite[Theorem 2.2(2)]{FGKT}.
\begin{lemma}
\label{lem:accepted_elasticity}
Assume that $\LLc \subseteq \mathcal P(\mathbf N)$ is a subadditive family with accepted non-zero elasticity, let $L \in \mathscr L$ such that $\rho = \rho(L)$, and set $n := \inf L$. The following hold:
\begin{enumerate}[label={\rm (\roman{*})}]
\item\label{it:lem:accepted_elasticity(i)} If $k \in \mathbf N^+$ and $kL \subseteq L^\prime$ for some $L^\prime \in \LLc$, then $\sup L^\prime = nk \rho$, $\inf L^\prime = nk$, and $\rho(L^\prime) = \rho$.
\item\label{it:lem:accepted_elasticity(ii)}
    $nk \rho = \rho_{nk}$ and $nk = \lambda_{nk\rho}$ for all $k \in \mathbf N^+$ \textup{(}note that $n \rho$ is a non-negative integer\textup{)}.
\end{enumerate}
\end{lemma}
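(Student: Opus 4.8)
The plan is to extract both items from Lemma~\ref{lem:inequalities_with_elasticities}, after first pinning down the shape of the distinguished set $L$. Since $\mathscr{L}$ has accepted \emph{non-zero} elasticity, $\rho = \rho(L)$ lies in $[1,\infty)$; in particular $\rho(L) > 0$ forces $L \notin \bigl\{\emptyset,\{0\}\bigr\}$, while $\rho < \infty$ lets us invoke Lemma~\ref{lem:inequalities_with_elasticities}\ref{it:lem:inequalities_with_elasticities(ii)} to rule out $0 \in L$ (which would give $|L| \ge 2$). Hence $0 \notin L$, so $n = \inf L = \inf L^+ \ge 1$ and $\sup L = \rho(L)\cdot\inf L^+ = n\rho$; moreover $\sup L < \infty$ (otherwise $\rho(L) = \infty$), so $n\rho = \sup L$ is a \emph{positive integer} and $L$ attains both $\inf L = n$ and $\sup L = n\rho$. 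I would use these facts freely below.

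For \ref{it:lem:accepted_elasticity(i)}, suppose $k \in \mathbf{N}^+$ and $kL \subseteq L^\prime$ for some $L^\prime \in \mathscr{L}$. Then $\inf L^\prime \le \inf(kL) = kn$ and $\sup L^\prime \ge \sup(kL) = k\sup L = kn\rho$. Since $kn \in \mathbf{N}^+$ and $0 < \rho < \infty$, I would apply Lemma~\ref{lem:inequalities_with_elasticities}\ref{it:lem:inequalities_with_elasticities(iv)} with the role of ``$n$'' played by $kn$ and the role of ``$L$'' by $L^\prime$: its two hypotheses $(kn)\rho \le \sup L^\prime$ and $\inf L^\prime \le kn$ are exactly what was just verified, so it yields $\sup L^\prime = kn\rho$, $\inf L^\prime = kn$, and $\rho(L^\prime) = \rho$, which is the assertion.

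For \ref{it:lem:accepted_elasticity(ii)}, fix $k \in \mathbf{N}^+$. A straightforward induction on $k$, using subadditivity of $\mathscr{L}$ and the identity $(k+1)L = L + kL$, produces $L^\prime \in \mathscr{L}$ with $kL \subseteq L^\prime$; by \ref{it:lem:accepted_elasticity(i)} we have $\inf L^\prime = nk$ and $\sup L^\prime = nk\rho$, and since $L^\prime$ is a bounded subset of $\mathbf{N}$ both values are attained, i.e.\ $\{nk,\, nk\rho\} \subseteq L^\prime$. Thus $nk\rho \in \mathscr{U}_{nk}$, so $\rho_{nk} \ge nk\rho$; together with $\rho_{nk} \le (nk)\rho$ from Lemma~\ref{lem:inequalities_with_elasticities}\ref{it:lem:inequalities_with_elasticities(iii)} this gives $\rho_{nk} = nk\rho$. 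Finally, Lemma~\ref{prop:basic(1)}\ref{it:prop:basic(1)(i)} turns $nk\rho \in \mathscr{U}_{nk}$ into $nk \in \mathscr{U}_{nk\rho}$, whence $\lambda_{nk\rho} \le nk$, while Lemma~\ref{lem:inequalities_with_elasticities}\ref{it:lem:inequalities_with_elasticities(iii)} applied to the index $nk\rho$ gives $\lambda_{nk\rho} \ge (nk\rho)\lambda = nk\rho/\rho = nk$; hence $\lambda_{nk\rho} = nk$.

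There is no genuine obstacle here: essentially all of the content is already packaged in Lemma~\ref{lem:inequalities_with_elasticities}. The only two points that need a little care are verifying that $0 \notin L$ (so that $\inf L^+ = \inf L$ and $\sup L = n\rho$ is a bona fide integer, matching the parenthetical remark in the statement) and the elementary iteration of subadditivity that yields a member of $\mathscr{L}$ containing $kL$; both are routine.
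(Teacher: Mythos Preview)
Your proof is correct and follows essentially the same line as the paper's: both establish that $L\subseteq\mathbf N^+$ with $\inf L=n$ and $\sup L=n\rho$, then feed the inequalities coming from $kL\subseteq L'$ into Lemma~\ref{lem:inequalities_with_elasticities}\ref{it:lem:inequalities_with_elasticities(iv)} for part~\ref{it:lem:accepted_elasticity(i)}, and combine part~\ref{it:lem:accepted_elasticity(i)} with Lemma~\ref{lem:inequalities_with_elasticities}\ref{it:lem:inequalities_with_elasticities(iii)} for part~\ref{it:lem:accepted_elasticity(ii)}. The only cosmetic difference is in the $\lambda_{nk\rho}$ half of~\ref{it:lem:accepted_elasticity(ii)}: the paper picks a fresh witness $L'\in\mathscr L$ containing $\{nk,\rho_{nk}\}$ and re-applies Lemma~\ref{lem:inequalities_with_elasticities}\ref{it:lem:inequalities_with_elasticities(iv)}, whereas you use the symmetry of Lemma~\ref{prop:basic(1)}\ref{it:prop:basic(1)(i)} to flip $nk\rho\in\mathscr U_{nk}$ into $nk\in\mathscr U_{nk\rho}$ directly---your route is arguably a touch cleaner.
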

\begin{proof}
\ref{it:lem:accepted_elasticity(i)} Let $k \in \mathbf N^+$ and $L^\prime \in \mathscr L$ such that $kL \subseteq L^\prime$.
Since $\mathscr L$ has accepted non-zero elasticity, we have $1 \le \rho < \infty$, and Lemma \ref{lem:inequalities_with_elasticities}\ref{it:lem:inequalities_with_elasticities(ii)} gives that $L$ and $L^\prime$ are (non-empty) finite subsets of $\mathbf N^+$. Accordingly, we conclude from $kL \subseteq L^\prime$ that $1 \le \inf L^\prime \le nk$ and $k \sup L \le \sup L^\prime < \infty$. It follows
$$
\frac{\sup L^\prime}{\inf L^\prime} = \rho(L^\prime) \le \rho = \rho(L) = \frac{\sup L}{n} = \frac{k \sup L}{nk} \le \frac{\sup L^\prime}{\inf L^\prime},
$$
where the right-most inequality is strict unless $\inf L^\prime = nk$ and $\sup L^\prime = k \sup L$, and it cannot be strict, otherwise we would have a contradiction. This finishes the proof, as it shows that $\rho(L^\prime) = \rho(L) = \rho$.

\ref{it:lem:accepted_elasticity(ii)}
Pick $k \in \mathbf N^+$. Because $\LLc$ is a subadditive family and $L^+ = L$ (as we have already noted), we have $nk \in \NNb^+$ and $kL \subseteq L_k$ for some $L_k \in \mathscr L$. Hence,
$$
\frac{\rho_{nk}}{nk} \ge \frac{\sup L_k}{nk}
\fixed[-0.25]{\text{ }}\stackrel{\ref{it:lem:accepted_elasticity(i)}}{=}\fixed[-0.25]{\text{ }}
\rho \ge \frac{\rho_{nk}}{nk},
$$
where the last inequality is derived from Lemma \ref{lem:inequalities_with_elasticities}\ref{it:lem:inequalities_with_elasticities(iii)}. So, we see that $\rho_{nk} = nk \rho$.

On the other hand, it is clear from the above that $n \rho \in \mathbf N^+$ and $\rho_{nk} < \infty$. In particular, we find that $\{nk, \rho_{nk}\} \subseteq L^\prime$ for some $L^\prime \in \mathscr L$, which, in turn, yields $\sup L^\prime = \rho_{nk} = nk \rho$. Consequently, we obtain from Lemma \ref{lem:inequalities_with_elasticities}\ref{it:lem:inequalities_with_elasticities(iv)} that $\inf L^\prime = nk$ (recall that $nk \in \mathbf N^+$), and since $\rho \lambda = 1$, we conclude
$$
\lambda_{nk\rho} = \inf \UUc_{nk\rho} \le \inf L^\prime = nk = nk \rho \lambda \le \lambda_{nk \rho},
$$
where again, for the last inequality, we use Lemma \ref{lem:inequalities_with_elasticities}\ref{it:lem:inequalities_with_elasticities(iii)}. So $\lambda_{nk \rho} = nk$, and we are done.
\end{proof}
As a side remark, Lemma \ref{lem:accepted_elasticity}\ref{it:lem:accepted_elasticity(ii)} fixes a mistake in the proof of an analogous (and less general) claim used as an intermediate step in the proof of \cite[Theorem 2.2(2)]{FGKT}.
\begin{proposition}
\label{prop:equivalent_conditions_for_accepted_elasticity}
Let $\mathscr L \subseteq \mathcal P(\mathbf N)$ be a subadditive family with finite non-zero elasticity. Then are equivalent:
\begin{enumerate}[label={\rm (\alph{*})}]
\item\label{it:lem:equivalent_conditions_for_accepted_elasticity(i)} $\mathscr L$ has accepted elasticity.
\item\label{it:lem:equivalent_conditions_for_accepted_elasticity(ii)} There exists $n \in \mathbf N^+$ such that $nk \rho = \rho_{nk}$ for all $k \in \mathbf N^+$.
\item\label{it:lem:equivalent_conditions_for_accepted_elasticity(iii)} $n \rho = \rho_n$ for some $n \in \mathbf N^+$.
\item\label{it:lem:equivalent_conditions_for_accepted_elasticity(iv)} There exists $n \in \mathbf N^+$ such that $n \rho \in \mathbf N^+$ and $nk = \lambda_{nk\rho}$ for all $k \in \mathbf N^+$.
\item\label{it:lem:equivalent_conditions_for_accepted_elasticity(v)} $n \rho \in \mathbf N^+$ and $n = \lambda_{n\rho}$ for some $n \in \mathbf N^+$.
\end{enumerate}
\end{proposition}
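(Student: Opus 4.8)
The plan is to prove the five conditions equivalent by running the cycle $\ref{it:lem:equivalent_conditions_for_accepted_elasticity(i)} \Rightarrow \ref{it:lem:equivalent_conditions_for_accepted_elasticity(ii)} \Rightarrow \ref{it:lem:equivalent_conditions_for_accepted_elasticity(iii)} \Rightarrow \ref{it:lem:equivalent_conditions_for_accepted_elasticity(i)}$ together with $\ref{it:lem:equivalent_conditions_for_accepted_elasticity(i)} \Rightarrow \ref{it:lem:equivalent_conditions_for_accepted_elasticity(iv)} \Rightarrow \ref{it:lem:equivalent_conditions_for_accepted_elasticity(v)} \Rightarrow \ref{it:lem:equivalent_conditions_for_accepted_elasticity(iii)}$, leaning throughout on Lemmas \ref{lem:inequalities_with_elasticities} and \ref{lem:accepted_elasticity}. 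The standing hypothesis $0 < \rho < \infty$ is what makes everything work: by Lemma \ref{lem:inequalities_with_elasticities}\ref{it:lem:inequalities_with_elasticities(i)} it rules out the degenerate case $\mathscr L \subseteq \{\emptyset, \{0\}\}$, and by Lemma \ref{lem:inequalities_with_elasticities}\ref{it:lem:inequalities_with_elasticities(ii)} no $L \in \mathscr L$ can contain $0$ together with a second element, so that $\inf L \in \mathbf N^+$ whenever $L$ is the relevant nonempty finite set.

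First I would handle $\ref{it:lem:equivalent_conditions_for_accepted_elasticity(i)} \Rightarrow \ref{it:lem:equivalent_conditions_for_accepted_elasticity(ii)}$ and $\ref{it:lem:equivalent_conditions_for_accepted_elasticity(i)} \Rightarrow \ref{it:lem:equivalent_conditions_for_accepted_elasticity(iv)}$ simultaneously: assuming (a), pick $L \in \mathscr L$ with $\rho = \rho(L)$ and set $n := \inf L$, so that $\mathscr L$ has accepted non-zero elasticity and Lemma \ref{lem:accepted_elasticity}\ref{it:lem:accepted_elasticity(ii)} at once delivers $nk\rho = \rho_{nk}$ and $nk = \lambda_{nk\rho}$ for all $k \in \mathbf N^+$; moreover $n\rho = \sup L$ is a positive integer (an integer by the parenthetical in Lemma \ref{lem:accepted_elasticity}\ref{it:lem:accepted_elasticity(ii)}, and positive since $\sup L = 0$ would force $L = \{0\}$ and hence $\rho = \rho(L) = 0$). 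This is exactly (b) and (d). The implications $\ref{it:lem:equivalent_conditions_for_accepted_elasticity(ii)} \Rightarrow \ref{it:lem:equivalent_conditions_for_accepted_elasticity(iii)}$ and $\ref{it:lem:equivalent_conditions_for_accepted_elasticity(iv)} \Rightarrow \ref{it:lem:equivalent_conditions_for_accepted_elasticity(v)}$ are the trivial specialization to $k = 1$.

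It then remains to close the loop with $\ref{it:lem:equivalent_conditions_for_accepted_elasticity(iii)} \Rightarrow \ref{it:lem:equivalent_conditions_for_accepted_elasticity(i)}$ and $\ref{it:lem:equivalent_conditions_for_accepted_elasticity(v)} \Rightarrow \ref{it:lem:equivalent_conditions_for_accepted_elasticity(iii)}$. For the former: from $\rho_n = n\rho$ and $0 < \rho < \infty$ one gets $\rho_n \in \mathbf N^+$ (it is the supremum of a subset of $\mathbf N$, hence in $\mathbf N \cup \{\infty\}$, and $n\rho$ is finite and nonzero), so $\UUc_n \ne \emptyset$ and $\rho_n = \max \UUc_n \in \UUc_n$; thus some $L \in \mathscr L$ has $\{n, \rho_n\} \subseteq L$, whence $n\rho = \rho_n \le \sup L$ and $\inf L \le n$, and Lemma \ref{lem:inequalities_with_elasticities}\ref{it:lem:inequalities_with_elasticities(iv)} forces $\rho(L) = \rho$, i.e.\ (a). For the latter: since $\lambda_{n\rho} = n \in \mathbf N^+$, we have $\UUc_{n\rho} \ne \emptyset$ and $n = \min \UUc_{n\rho} \in \UUc_{n\rho}$, so Lemma \ref{prop:basic(1)}\ref{it:prop:basic(1)(i)} gives $n\rho \in \UUc_n$, hence $\rho_n \ge n\rho$; combined with $\rho_n \le n\rho$ from Lemma \ref{lem:inequalities_with_elasticities}\ref{it:lem:inequalities_with_elasticities(iii)}, this yields $\rho_n = n\rho$, which is (c). Stringing these implications together establishes all five equivalences.

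I do not expect a genuine obstacle here: the argument is essentially a chain of cross-references to the preceding lemmas. The one recurrent point that needs care is the passage "the extremal value is attained'' — that a nonempty subset of $\mathbf N$ bounded above (resp.\ below) has a maximum (resp.\ minimum), which is what converts the invariants $\rho_n$ and $\lambda_{n\rho}$ back into an honest member $L$ of the family — together with checking, at each invocation of Lemma \ref{lem:accepted_elasticity} or Lemma \ref{lem:inequalities_with_elasticities}\ref{it:lem:inequalities_with_elasticities(iv)}, that the hypotheses (accepted non-zero elasticity, or $0 < \rho < \infty$ with $n\rho \le \sup L$ and $\inf L \le n$) are actually in force.
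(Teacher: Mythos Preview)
Your proposal is correct and follows the paper's proof almost exactly: the implications $\ref{it:lem:equivalent_conditions_for_accepted_elasticity(i)} \Rightarrow \ref{it:lem:equivalent_conditions_for_accepted_elasticity(ii)}$, $\ref{it:lem:equivalent_conditions_for_accepted_elasticity(i)} \Rightarrow \ref{it:lem:equivalent_conditions_for_accepted_elasticity(iv)}$, $\ref{it:lem:equivalent_conditions_for_accepted_elasticity(ii)} \Rightarrow \ref{it:lem:equivalent_conditions_for_accepted_elasticity(iii)}$, $\ref{it:lem:equivalent_conditions_for_accepted_elasticity(iv)} \Rightarrow \ref{it:lem:equivalent_conditions_for_accepted_elasticity(v)}$, and $\ref{it:lem:equivalent_conditions_for_accepted_elasticity(iii)} \Rightarrow \ref{it:lem:equivalent_conditions_for_accepted_elasticity(i)}$ are handled identically via Lemmas \ref{lem:accepted_elasticity}\ref{it:lem:accepted_elasticity(ii)} and \ref{lem:inequalities_with_elasticities}\ref{it:lem:inequalities_with_elasticities(iv)}. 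The only deviation is in closing off \ref{it:lem:equivalent_conditions_for_accepted_elasticity(v)}: the paper proves $\ref{it:lem:equivalent_conditions_for_accepted_elasticity(v)} \Rightarrow \ref{it:lem:equivalent_conditions_for_accepted_elasticity(i)}$ directly by exhibiting $L \in \mathscr L$ with $\{\lambda_{n\rho}, n\rho\} \subseteq L$ and invoking Lemma \ref{lem:inequalities_with_elasticities}\ref{it:lem:inequalities_with_elasticities(iv)} a second time, whereas you instead prove $\ref{it:lem:equivalent_conditions_for_accepted_elasticity(v)} \Rightarrow \ref{it:lem:equivalent_conditions_for_accepted_elasticity(iii)}$ via the symmetry $n \in \mathscr U_{n\rho} \Leftrightarrow n\rho \in \mathscr U_n$ of Lemma \ref{prop:basic(1)}\ref{it:prop:basic(1)(i)} combined with $\rho_n \le n\rho$ from Lemma \ref{lem:inequalities_with_elasticities}\ref{it:lem:inequalities_with_elasticities(iii)}; this is a valid and mildly cleaner alternative that calls Lemma \ref{lem:inequalities_with_elasticities}\ref{it:lem:inequalities_with_elasticities(iv)} only once.
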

\begin{proof}
\ref{it:lem:equivalent_conditions_for_accepted_elasticity(i)} $\Rightarrow$ \ref{it:lem:equivalent_conditions_for_accepted_elasticity(ii)} and \ref{it:lem:equivalent_conditions_for_accepted_elasticity(i)} $\Rightarrow$ \ref{it:lem:equivalent_conditions_for_accepted_elasticity(iv)} follow from Lemma \ref{lem:accepted_elasticity}\ref{it:lem:accepted_elasticity(ii)} (using that $\LLc$ has accepted non-zero elasticity, pick $L \in \mathscr L$ with $\rho(L) = \rho$, notice that $\emptyset \ne L \subseteq \mathbf N^+$ and $\sup L < \infty$, and set $n := \inf L$), while \ref{it:lem:equivalent_conditions_for_accepted_elasticity(ii)} $\Rightarrow$ \ref{it:lem:equivalent_conditions_for_accepted_elasticity(iii)} and \ref{it:lem:equivalent_conditions_for_accepted_elasticity(iv)} $\Rightarrow$ \ref{it:lem:equivalent_conditions_for_accepted_elasticity(v)} are obvious. So, it remains to show that \ref{it:lem:equivalent_conditions_for_accepted_elasticity(iii)} $\Rightarrow$ \ref{it:lem:equivalent_conditions_for_accepted_elasticity(i)} and \ref{it:lem:equivalent_conditions_for_accepted_elasticity(v)} $\Rightarrow$ \ref{it:lem:equivalent_conditions_for_accepted_elasticity(i)}.

\ref{it:lem:equivalent_conditions_for_accepted_elasticity(iii)} $\Rightarrow$ \ref{it:lem:equivalent_conditions_for_accepted_elasticity(i)}: Let $n \in \mathbf N^+$ such that $n \rho = \rho_n$. Since $\rho$ is finite, $\rho_n < \infty$ and $\{n, \rho_n\} \subseteq L$ for some $L \in \mathscr L$. It follows $n \rho = \rho_n \le \sup L$ and $\inf L \le n$, which, by Lemma \ref{lem:inequalities_with_elasticities}\ref{it:lem:inequalities_with_elasticities(iv)}, implies $\rho = \rho(L)$.

\ref{it:lem:equivalent_conditions_for_accepted_elasticity(v)} $\Rightarrow$ \ref{it:lem:equivalent_conditions_for_accepted_elasticity(i)}: Let $n \in \mathbf N^+$ such that $n \rho \in \mathbf N^+$ and $n = \lambda_{n \rho}$. Then $\lambda_{n\rho} < \infty$ and, similarly to the previous analysis, there exists $L \in \mathscr L$ with  $\{\lambda_{n\rho}, n \rho\} \subseteq L$. So $n \rho \le \sup L$ and $\inf L \le \lambda_{n\rho} = n$, which, again by Lemma \ref{lem:inequalities_with_elasticities}\ref{it:lem:inequalities_with_elasticities(iv)}, yields $\rho = \rho(L)$.
\end{proof}
The next two propositions are the key (technical) results of this paper: In particular, the first of them is a substantial im\-prove\-ment of \cite[Lemma 3.4]{Fr-Ge08} (see also Claim 3 in the proof of \cite[Theorem 2.2(2)]{FGKT}).
\begin{proposition}
\label{th:key-theorem_1}
Let $\LLc \subseteq \mathcal P(\mathbf N)$ be a subadditive, primitive family with accepted non-zero elasticity. Then there exists $m \in \mathbf N^+$ such that the following hold:
\begin{enumerate}[label={\rm (\roman{*})}]
\item\label{it:th:key-theorem_1(i)} $\rho_m = m \rho$ and $\lambda_m = m \lambda$.
\item\label{it:th:key-theorem_1(iii)} $\rho_{k+m} = \rho_{k} + m\rho$ and $\lambda_{k+m} = \lambda_{k} + m \lambda$ for all large $k \in \mathbf N$.
\end{enumerate}
\end{proposition}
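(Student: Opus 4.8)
The proof stays entirely within the additive model, and most of the work has already been done in Lemmas~\ref{lem:inequalities_with_elasticities} and~\ref{lem:accepted_elasticity}; the task is mainly to package those results. Since $\LLc$ has accepted non-zero elasticity, fix $L \in \LLc$ with $\rho(L) = \rho$: then $1 \le \rho < \infty$ and, by Lemma~\ref{lem:inequalities_with_elasticities}\ref{it:lem:inequalities_with_elasticities(ii)}, $L$ is a non-empty finite subset of $\mathbf N^+$, so that $\rho$ is a positive rational and $n := \inf L \in \mathbf N^+$. Put $p := n\rho \in \mathbf N^+$ and $m := np = n^2\rho$. By Lemma~\ref{lem:accepted_elasticity}\ref{it:lem:accepted_elasticity(ii)}, $\rho_{nk} = nk\rho$ and $\lambda_{nk\rho} = nk$ for every $k \in \mathbf N^+$, and part~\ref{it:th:key-theorem_1(i)} follows by specialization: taking $k = p$ in the first identity gives $\rho_m = \rho_{np} = np\rho = m\rho$, while taking $k = n$ in the second (so that the index $nk\rho = n^2\rho$ is exactly $m$) gives $\lambda_m = n^2 = m\lambda$, since $m = n^2\rho$ and $\rho\lambda = 1$.

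For part~\ref{it:th:key-theorem_1(iii)}, I pass to the ``defect'' sequences $c_k := k\rho - \rho_k$ and $d_k := \lambda_k - k\lambda$. Because $\LLc$ is primitive, Lemma~\ref{lem:non-emptyness-of-multiples}\ref{it:lem:non-emptyness-of-multiples(ii)} provides $k_0 \in \mathbf N$ with $\UUc_k \ne \emptyset$ for all $k \ge k_0$; for such $k$ one has $c_k, d_k \ge 0$ by Lemma~\ref{lem:inequalities_with_elasticities}\ref{it:lem:inequalities_with_elasticities(iii)}, and $\rho_k, \lambda_k$ are finite non-negative integers (finiteness of $\rho$ forces every member of $\LLc$ to be finite, so the supremum defining $\rho_k$ is attained, and $\lambda_k = \min\UUc_k$). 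Also $\UUc_m \ne \emptyset$, since $\rho_m = m\rho \ge m \ge 1$. Hence part~\ref{it:th:key-theorem_1(i)} and Lemma~\ref{prop:basic(1)}\ref{it:prop:basic(1)(v)}, applied with $i = j = 1$ to the pair $k$ and $m$, give
$$
\rho_{k+m} \ge \rho_k + \rho_m = \rho_k + m\rho
\quad\text{and}\quad
\lambda_{k+m} \le \lambda_k + \lambda_m = \lambda_k + m\lambda
$$
for all $k \ge k_0$; equivalently, $c_{k+m} \le c_k$ and $d_{k+m} \le d_k$. Thus, restricted to any of the $m$ residue classes modulo $m$, the sequences $(c_k)$ and $(d_k)$ are eventually non-increasing.

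The remaining point — and the one I would expect to draw scrutiny, though it is not deep — is to upgrade ``eventually non-increasing'' to ``eventually constant'' by invoking discreteness: since $\rho_m = m\rho$ and $\lambda_m = m\lambda$ are integers and $\rho_k, \lambda_k \in \mathbf N$ for $k \ge k_0$, the differences $c_k - c_{k+m} = (\rho_{k+m} - \rho_k) - m\rho$ and $d_k - d_{k+m} = (\lambda_k - \lambda_{k+m}) + m\lambda$ are non-negative integers, so along each residue class the non-negative sequences $(c_k)$ and $(d_k)$ can strictly decrease only finitely often. Letting $K \ge k_0$ be past the stabilization index of all $m$ residue classes, we get $c_{k+m} = c_k$ and $d_{k+m} = d_k$, i.e. $\rho_{k+m} = \rho_k + m\rho$ and $\lambda_{k+m} = \lambda_k + m\lambda$, for every $k \ge K$, which is part~\ref{it:th:key-theorem_1(iii)}. (One could equally well phrase the discreteness through the bounded denominator of the rational $\rho$.)
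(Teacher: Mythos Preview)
Your proof is correct and follows essentially the same approach as the paper: part~\ref{it:th:key-theorem_1(i)} is read off from Lemma~\ref{lem:accepted_elasticity}\ref{it:lem:accepted_elasticity(ii)}, and part~\ref{it:th:key-theorem_1(iii)} comes from the monotonicity of the defect sequences (via subadditivity and $\rho_m = m\rho$, $\lambda_m = m\lambda$) together with discreteness. The paper packages the stabilization step a little differently---it takes $m := k_0\,\lcm(n,n\rho)$, locates for each residue class the index where the defect is extremal, and then passes to an $\lcm$ of those indices---but your observation that a non-negative, non-increasing sequence with integer decrements can strictly drop only finitely often is equivalent and, if anything, tidier.
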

\begin{proof}
Since $\LLc$ is a primitive family, we get from Lemma \ref{lem:non-emptyness-of-multiples}\ref{it:lem:non-emptyness-of-multiples(ii)} that there is $k_0 \in \mathbf N^+$ for which
\begin{equation}
\label{equ:non-empty-unions}
\UUc_k \ne \emptyset,\ \text{for }k \ge k_0.
\end{equation}
In addition, we infer from Lemmas \ref{lem:inequalities_with_elasticities}\ref{it:lem:inequalities_with_elasticities(ii)} and \ref{lem:accepted_elasticity}\ref{it:lem:accepted_elasticity(ii)}, in view of the fact that $\LLc$ has accepted elasticity, that there exists $n \in \mathbf N^+$ such that $n \rho \in \mathbf N^+$ and
\begin{equation}
\label{equ:formulas-implied-by-accepted-elasticity}
\rho_{nk} = nk \rho
\quad\text{and}\quad
\lambda_{nk\rho} = nk, \text{ for all }k \in \mathbf N^+.
\end{equation}
On the other hand,
Lemma \ref{lem:inequalities_with_elasticities}\ref{it:lem:inequalities_with_elasticities(iii)} gives
\begin{equation}
\label{equ:basic-inequality-on-local-elas}
\rho_{k} \le k\rho < \infty
\quad\text{and}\quad
k\lambda \le \lambda_{k}, \ \text{for all }k \in \mathbf N^+.
\end{equation}
Set $m := k_0 \lcm(n, n\rho)$
and pick $r \in \llb 0, m-1 \rrb$.
Since $\lambda\rho = 1$, we obtain from \eqref{equ:basic-inequality-on-local-elas} that
$$
\rho_{mk + r} -  mk \rho \le r \rho \le (m-1)\rho < \infty
\quad\text{and}\quad
\lambda_{mk+r} - mk\lambda \ge r\lambda \ge 0, \ \text{for all }k \in \mathbf N^+.
$$
This shows that the sets
$\mathcal U_r := \{\rho_{mk+r} -  mk\rho: k \in \mathbf N^+\} \subseteq \mathbf Z
$ and $\mathcal L_r := \{\lambda_{mk+r} - mk\lambda: k \in \mathbf N^+\} \subseteq \mathbf N$ have, respectively, a maximum and a minimum: Let $h_r, \ell_r \in \mathbf N^+$ such that
\begin{equation}
\label{equ:max&min}
{\rho_{mh_r+r} -  mh_r\rho = \sup\fixed[0.3]{\text{ }}\mathcal{U}_r} \in \mathbf Z
\quad\text{and}\quad
\lambda_{m\ell_r+r} -  m\ell_r\lambda = \inf \mathcal{L}_r \in \mathbf N.
\end{equation}
Then, considering that $m \ge k_0$, we derive from \eqref{equ:non-empty-unions} and Lemma \ref{prop:basic(1)}\ref{it:prop:basic(1)(v)} that, for every $k \in \mathbf N^+$,
\begin{equation*}
\label{equ:last_equ}
\begin{split}
\rho_{m(k+h_r)+r} - m(k+h_r)\rho
    & \fixed[-0.25]{\text{ }}\stackrel{\eqref{equ:max&min}}{\le}\fixed[-0.25]{\text{ }}
    \rho_{mh_r+r} - mh_r \rho
    \fixed[-0.25]{\text{ }}\stackrel{\eqref{equ:formulas-implied-by-accepted-elasticity}}{=}\fixed[-0.25]{\text{ }}
    \rho_{mh_r+r} + \rho_{mk} - m(k+h_r) \rho \\
    & \le \rho_{m(k+h_r)+r} - m(k+h_r) \rho,
\end{split}
\end{equation*}
and, in a similar way (note that $m\lambda$ is a positive integer and $mk = mk \lambda \rho$),
\begin{equation*}
\begin{split}
\lambda_{m(k+\ell_r)+r} - m(k+\ell_r) \lambda
    & \fixed[-0.25]{\text{ }}\stackrel{\eqref{equ:max&min}}{\ge}\fixed[-0.25]{\text{ }}
    \lambda_{m\ell_r+r} - m\ell_r \lambda
    \fixed[-0.25]{\text{ }}\stackrel{\eqref{equ:formulas-implied-by-accepted-elasticity}}{=}\fixed[-0.25]{\text{ }}
    \lambda_{m\ell_r+r} + \lambda_{mk} - m(k+\ell_r) \lambda \\
    & \ge \lambda_{m(k+\ell_r)+r} - m(k+\ell_r) \lambda.
\end{split}
\end{equation*}
To wit, we have established that
$$
\rho_{m(k+h_r)+r} = mk\rho + \rho_{mh_r+r}
\quad\text{and}\quad
\lambda_{m(k+\ell_r)+r} = mk\lambda + \lambda_{m\ell_r+r}, \text{ for all }k \in \mathbf N.
$$
It follows that, for every $k \in \mathbf N$ and $\eta \in \mathbf N^+$,
\begin{equation}
\label{equ:stroner-periodicity-on-the-upper-elasticities}
\begin{split}
\rho_{m(k+\eta h_r)+r}
   = mk \rho + m(\eta-1)h_r\rho + \rho_{mh_r+r} = mk \rho + \rho_{m\eta h_r+r}
\end{split}
\end{equation}
and
\begin{equation}
\label{equ:stroner-periodicity-on-the-lower-elasticities}
\begin{split}
\lambda_{m(k+\eta\ell_r)+r}
   = mk \lambda + m(\eta-1)\ell_r\lambda + \lambda_{m\ell_r+r} = mk\lambda + \lambda_{m\eta\ell_r+r}.
\end{split}
\end{equation}
Take
$
s := \lcm(h_0, \ell_0, \ldots, h_{m-1}, \ell_{m-1}) \in \mathbf N^+$. Then,
for each $r \in \llb 0, m-1 \rrb$, there exist $u_r, v_r \in \mathbf N^+$ with $s = h_r u_r = \ell_r v_r$, and we conclude from \eqref{equ:stroner-periodicity-on-the-upper-elasticities} and \eqref{equ:stroner-periodicity-on-the-lower-elasticities} that
\begin{equation}
\label{equ:lifting-to-lcm}
\rho_{m(k+s) + r} = mk\rho + \rho_{ms+r}
\quad\text{and}\quad
\lambda_{m(k+s)+r} = mk\lambda + \lambda_{ms+r},\ \text{for all }k \in \mathbf N.
\end{equation}
With all the above in place, it is now clear from \eqref{equ:formulas-implied-by-accepted-elasticity}, since $m = k_0 \lcm(n, n \rho)$, that $\rho_m = m\rho$ and $\lambda_m = m\lambda$ (recall that $\lambda \rho = 1$).
So, we are only left to prove \ref{it:th:key-theorem_1(iii)}. To this end, let $\kappa$ be an integer $\ge ms$. Then, we can write $\kappa = mk + r$ for some $k \ge s$ and $r \in \llb 0, m-1 \rrb$, and we get from \eqref{equ:lifting-to-lcm} that
$$
\rho_{\kappa + m} = \rho_{m(k+1)+r} = m(k+1-s)\rho + \rho_{ms+r} = m\rho + \rho_{mk+r} = m\rho + \rho_{\kappa}.
$$
Likewise (we omit details), we have $\lambda_{\kappa + m} = m\lambda + \lambda_{\kappa}$, and we are done.
\end{proof}
\begin{proposition}
\label{prop:periodicity(1)}
Assume $\LLc \subseteq \mathcal P(\mathbf N)$ is a subadditive, primitive family with $\Delta(\LLc) \ne \emptyset$ and ac\-cept\-ed elasticity.
Then there exists $m \in \mathbf N^+$ such that, for each $i \in \mathbf N^+$, the following hold for all large $k \in \mathbf N$:
\begin{enumerate}[label={\rm (\roman{*})}]
\item\label{it:cor:periodicity(1)(i)} $\rho_{k+m} - \rho_{k+m,i} = \rho_{k} - \rho_{k,i}$ and $\lambda_{k+m} - \lambda_{k+m,i} = \lambda_{k} - \lambda_{k,i}$.
\item\label{it:cor:periodicity(1)(ii)} $\rho_{k+m,i} - \rho_{k,i} = m \rho$ and $\lambda_{k+m,i} - \lambda_{k,i} = m\lambda$.
\end{enumerate}
\end{proposition}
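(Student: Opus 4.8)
The plan is to derive both items straight from Proposition \ref{th:key-theorem_1} and the superadditivity of the $i$-th local elasticities (the case $j=1$ of Lemma \ref{prop:basic(1)}\ref{it:prop:basic(1)(iv)}), by a short monovariant argument. First I would clear away the degenerate cases: since $\Delta(\LLc)\ne\emptyset$, Lemma \ref{lem:inequalities_with_elasticities}\ref{it:lem:inequalities_with_elasticities(i)} forbids $\rho=0$, and the accepted-elasticity hypothesis then forces $1\le\rho=\rho(L)<\infty$ for some $L\in\LLc$; that is, $\LLc$ is a subadditive, primitive family with accepted non-zero elasticity. Hence Proposition \ref{th:key-theorem_1} provides an $m\in\mathbf N^+$ with $\rho_m=m\rho$, $\lambda_m=m\lambda$, and $\rho_{k+m}=\rho_k+m\rho$, $\lambda_{k+m}=\lambda_k+m\lambda$ for all large $k$. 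I claim this same $m$ (independent of $i$) works in the proposition, with the threshold on $k$ allowed to depend on $i$.

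Fix $i\in\mathbf N^+$. Because $\LLc$ is primitive ($\wp=1$) and $\Delta(\LLc)\ne\emptyset$, Lemma \ref{lem:non-emptyness-of-multiples}\ref{it:lem:non-emptyness-of-multiples(iv)} gives $\UUc_{k,i}\ne\emptyset$ for all large $k$; since $\rho<\infty$ keeps every $\UUc_k$ finite, $\rho_{k,i}=\max\UUc_{k,i}$ and $\lambda_{k,i}=\min\UUc_{k,i}$ are then positive integers, and likewise $\UUc_m\ne\emptyset$ (as $\rho_m=m\rho\ge 1$), so $\rho_m=\max\UUc_m$ and $\lambda_m=\min\UUc_m$ are attained. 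Feeding $(h,i,j)=(k,i,1)$ into Lemma \ref{prop:basic(1)}\ref{it:prop:basic(1)(iv)} yields $\UUc_{k,i}+\UUc_m\subseteq\UUc_{k+m,i}$ for all large $k$; adding the largest element of $\UUc_{k,i}$ to the largest of $\UUc_m$ (resp. the smallest to the smallest) and reading off the corresponding extreme of $\UUc_{k+m,i}$ gives
$$
\rho_{k+m,i}\ \ge\ \rho_{k,i}+m\rho
\qquad\text{and}\qquad
\lambda_{k+m,i}\ \le\ \lambda_{k,i}+m\lambda .
$$

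Now combine these two inequalities with the exact relations $\rho_{k+m}=\rho_k+m\rho$ and $\lambda_{k+m}=\lambda_k+m\lambda$: the non-negative integer sequences $k\mapsto \rho_k-\rho_{k,i}$ and $k\mapsto \lambda_{k,i}-\lambda_k$ satisfy $a_{k+m}\le a_k$ for all large $k$, hence each is eventually constant along every residue class modulo $m$; taking the maximum of the $m$ stabilization points, there is a threshold (depending on $i$) past which $\rho_{k+m}-\rho_{k+m,i}=\rho_k-\rho_{k,i}$ and $\lambda_{k+m}-\lambda_{k+m,i}=\lambda_k-\lambda_{k,i}$, which is \ref{it:cor:periodicity(1)(i)}. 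Subtracting this from $\rho_{k+m}=\rho_k+m\rho$ (resp. $\lambda_{k+m}=\lambda_k+m\lambda$) then gives $\rho_{k+m,i}-\rho_{k,i}=m\rho$ and $\lambda_{k+m,i}-\lambda_{k,i}=m\lambda$, i.e. \ref{it:cor:periodicity(1)(ii)}.

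Given Proposition \ref{th:key-theorem_1}, there is essentially no hard step; the only points requiring care are ensuring $\UUc_{k,i}$ and $\UUc_m$ are non-empty so that their extreme elements are attained (handled by Lemma \ref{lem:non-emptyness-of-multiples}\ref{it:lem:non-emptyness-of-multiples(iv)} and $\rho_m>0$), that $m$ is fixed uniformly in $i$ while the threshold on $k$ may vary with $i$, and the (mild) observation that the lower-elasticity half is here genuinely symmetric to the upper-elasticity half — both come from the single inclusion $\UUc_{k,i}+\UUc_m\subseteq\UUc_{k+m,i}$ by inspecting its smallest versus its largest element.
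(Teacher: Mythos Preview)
Your proof is correct and follows essentially the same line as the paper's: both invoke Proposition~\ref{th:key-theorem_1} to get the exact relations $\rho_{k+m}=\rho_k+m\rho$ and $\lambda_{k+m}=\lambda_k+m\lambda$, combine them with the superadditivity inequality $\rho_{k+m,i}\ge\rho_{k,i}+\rho_m$ (and its $\lambda$-analogue) to see that the non-negative integer sequences $\rho_k-\rho_{k,i}$ and $\lambda_{k,i}-\lambda_k$ are eventually non-increasing along residue classes mod $m$, hence eventually constant, and then deduce \ref{it:cor:periodicity(1)(ii)} from \ref{it:cor:periodicity(1)(i)}. The only cosmetic difference is that you cite Lemma~\ref{prop:basic(1)}\ref{it:prop:basic(1)(iv)} and read off the extremes of $\UUc_{k,i}+\UUc_m\subseteq\UUc_{k+m,i}$, whereas the paper cites the inequality form in Lemma~\ref{prop:basic(1)}\ref{it:prop:basic(1)(v)} directly.
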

\begin{proof}
Since $\DeltaF{\LLc}$ is non-empty, $\rho$ is non-zero. So, taking into account that $\LLc$ has accepted elasticity, we get from Proposition \ref{th:key-theorem_1} that there exists $m \in \mathbf N^+$ such that
\begin{equation}
\label{equ:linking-relations}
\rho_{k+m} = \rho_{k} + \rho_m = \rho_{k} + m\rho
\quad\text{and}\quad
\lambda_{k+m} = \lambda_{k} + \lambda_m = \lambda_{k} + m\lambda, \text{ for every large }k.
\end{equation}
Accordingly, fix $i \in \mathbf N^+$. By Lemma \ref{lem:non-emptyness-of-multiples}\ref{it:lem:non-emptyness-of-multiples(iv)}, we have that
\begin{equation}
\label{equ:inequality-with-non-empty-delta-set}
\UUc_k \ne \emptyset
\quad\text{and}\quad
\lambda_{k,1} \le \cdots \le \lambda_{k,i} \le k \le \rho_{k,i} \le \cdots \le \rho_{k,1} < \infty, \text{ for all but finitely many }k.
\end{equation}
It follows by Lemma \ref{prop:basic(1)}\ref{it:prop:basic(1)(v)} and \eqref{equ:linking-relations} that, from some $k$ on,
$$
\rho_{k+m,i} \ge \rho_{k,i} + \rho_m = \rho_{k,i} + \rho_{k+m} - \rho_{k}
\quad\text{and}\quad
\lambda_{k+m,i} \le \lambda_{k,i} + \lambda_m = \lambda_{k,i} + \lambda_{k+m} - \lambda_{k},
$$
which, after rearrangement, leads to
\begin{equation}
\label{equ:non-increasing}
0 \le \rho_{k+m} - \rho_{k+m,i} \le \rho_{k} - \rho_{k,i}
\quad\text{and}\quad
0 \le \lambda_{k+m,i} - \lambda_{k+m} \le \lambda_{k,i} - \lambda_{k}.
\end{equation}
With this in hand, we proceed to prove points \ref{it:cor:periodicity(1)(i)} and \ref{it:cor:periodicity(1)(ii)}.

\ref{it:cor:periodicity(1)(i)} We obtain from \eqref{equ:non-increasing} that there exists $k_i \in \mathbf N$ such that, for every $k \ge k_i$, the $\mathbf N$-valued sequences $(\rho_{k+mh} - \rho_{k+mh,i})_{h \ge 0}$ and $(\lambda_{k+mh} - \lambda_{k+mh,i})_{h \ge 0}$ are both eventually non-increasing, hence eventually constant. In particular, for each $r \in \llb 0, m - 1 \rrb$ there is $h_r \in \mathbf N$ such that, for $h \ge h_r$,
\begin{equation}
\label{equ:quasi-periodicity-of-upper-elasticities}
\rho_{k_i+r+mh} - \rho_{k_i+r+mh,i} = \rho_{k_i+r+mh_r} - \rho_{k_i+r+mh_r,i}
\end{equation}
and
\begin{equation}
\label{equ:quasi-periodicity-of-lower-elasticities}
\lambda_{k_i+r+mh} - \lambda_{k_i+r+mh,i} = \lambda_{k_i+r+mh_r} - \lambda_{k_i+r+mh_r,i}\fixed[0.3]{\text{ }}.
\end{equation}
Now, let $k \ge k_i + m \max(h_0, \ldots, h_{m-1})$. Then, there are uniquely de\-ter\-mined $\kappa \in \mathbf N$ and $r \in \llb 0, m - 1\rrb$ such that $k-k_i = m\kappa + r$, and it is easily seen that $\kappa \ge h_r$. So, we derive from \eqref{equ:quasi-periodicity-of-upper-elasticities} that
\begin{equation*}
\begin{split}
\rho_{k+m} - \rho_{k+m,i}
    & = \rho_{k_i+r+m(\kappa+1)} - \rho_{k_i+r+m(\kappa+1),i} = \rho_{k_i+r+mh_r} - \rho_{k_i+r+mh_r,i} \\
    & = \rho_{k_i+r+m\kappa} - \rho_{k_i+r+m\kappa,i} = \rho_k - \rho_{k,i}\fixed[0.3]{\text{ }},
\end{split}
\end{equation*}
and in a similar way (we omit details) we derive from \eqref{equ:quasi-periodicity-of-lower-elasticities} that
$\lambda_{k+m} - \lambda_{k+m,i} = \lambda_{k} - \lambda_{k,i}\fixed[0.3]{\text{ }}$.

\ref{it:cor:periodicity(1)(ii)} We infer from  \eqref{equ:inequality-with-non-empty-delta-set} and point \ref{it:cor:periodicity(1)(i)} that $\rho_{k+m,i} - \rho_{k,i} = \rho_{k+m} - \rho_{k}$ and $\lambda_{k+m,i} - \lambda_{k,i} = \lambda_{k+m} - \lambda_{k}$ for all large $k$, which, by \eqref{equ:linking-relations}, is enough to conclude.
\end{proof}
\begin{theorem}
\label{cor:main-corollary}
Let $\LLc \subseteq \mathcal P(\mathbf N)$ be a subadditive, primitive family with accepted elasticity. Then there exists $\mu \in \mathbf N^+$ such that, for every $M \in \mathbf N$, the following hold for all but finitely many $k$:
\begin{enumerate}[label={\rm (\roman{*})}]
\item\label{it:cor:main-corollary(i)} $(\rho_{k+\mu} - \mathscr{U}_{k+\mu}) \cap \llb 0, M \rrb = (\rho_k - \mathscr{U}_k) \cap \llb  0,M\rrb$.
\item\label{it:cor:main-corollary(ii)} $(\mathscr{U}_{k+\mu} - \lambda_{k+\mu}) \cap \llb 0, M \rrb = (\mathscr{U}_k - \lambda_k) \cap \llb 0, M \rrb$.
\end{enumerate}
\end{theorem}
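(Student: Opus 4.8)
The plan is to reduce the statement to Proposition~\ref{prop:periodicity(1)}, after isolating a degenerate case. First I would treat the case $\Delta(\mathscr{L})=\emptyset$: by Lemma~\ref{prop:basic(1)}\ref{it:prop:basic(1)(ii)} each $\mathscr{U}_k$ is contained in $\{k\}$, and since $\mathscr{L}$ is primitive (so $\wp=1$) Lemma~\ref{lem:non-emptyness-of-multiples}\ref{it:lem:non-emptyness-of-multiples(ii)} forces $\mathscr{U}_k=\{k\}$ for all large $k$; then $\rho_k-\mathscr{U}_k=\mathscr{U}_k-\lambda_k=\{0\}$ eventually, so $\mu:=1$ works. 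The substantive case is $\Delta(\mathscr{L})\ne\emptyset$, where I would take $\mu:=m$ with $m$ the modulus supplied by Proposition~\ref{prop:periodicity(1)}: thus for each fixed $i\in\mathbf{N}^+$ one has $\rho_{k+\mu,i}=\rho_{k,i}+\mu\rho$ and $\lambda_{k+\mu,i}=\lambda_{k,i}+\mu\lambda$ for all large $k$.

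The core of the argument is the observation that, for a fixed $M$, only a bounded number of the largest elements of $\mathscr{U}_k$ lie within distance $M$ of $\rho_k$ (and symmetrically for the smallest elements and $\lambda_k$), the bound being uniform once $k$ is large. Since $\mathscr{L}$ is primitive (hence nonempty) and has accepted elasticity, $\rho=\rho(L)<\infty$ for some $L\in\mathscr{L}$, so Lemma~\ref{lem:inequalities_with_elasticities}\ref{it:lem:inequalities_with_elasticities(iii)} gives $\rho_k\le k\rho<\infty$ and every $\mathscr{U}_k$ is finite; writing its elements in decreasing order, the largest ones are $\rho_{k,1}>\rho_{k,2}>\cdots$, and as they are integers $\rho_{k,1}-\rho_{k,i}\ge i-1$, so only the indices $i\le M+1$ can contribute to $(\rho_k-\mathscr{U}_k)\cap\llb 0,M\rrb$. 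To make this description uniform in $k$, one needs $\mathscr{U}_k$ to have at least $M+2$ elements, so that $\rho_{k,i}$ and $\lambda_{k,i}$ genuinely are the $i$-th largest and $i$-th smallest elements of $\mathscr{U}_k$ for $i\le M+1$; this is precisely Lemma~\ref{lem:non-emptyness-of-multiples}\ref{it:lem:non-emptyness-of-multiples(iv)} applied with $i=M+2$ and $\wp=1$, giving $\mathscr{U}_{k,M+2}\ne\emptyset$, hence $|\mathscr{U}_k|\ge 2M+3$, for all large $k$. Thus, for all large $k$, $(\rho_k-\mathscr{U}_k)\cap\llb 0,M\rrb=\{\rho_{k,1}-\rho_{k,i}:1\le i\le M+1,\ \rho_{k,1}-\rho_{k,i}\le M\}$ and, symmetrically, $(\mathscr{U}_k-\lambda_k)\cap\llb 0,M\rrb=\{\lambda_{k,i}-\lambda_{k,1}:1\le i\le M+1,\ \lambda_{k,i}-\lambda_{k,1}\le M\}$.

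With these normal forms available, the rest is bookkeeping: for each $i\in\{1,\dots,M+1\}$, subtracting the identity of Proposition~\ref{prop:periodicity(1)}\ref{it:cor:periodicity(1)(ii)} for the index $i$ from the one for the index $1$ gives $\rho_{k+\mu,1}-\rho_{k+\mu,i}=\rho_{k,1}-\rho_{k,i}$, and likewise $\lambda_{k+\mu,i}-\lambda_{k+\mu,1}=\lambda_{k,i}-\lambda_{k,1}$, for all large $k$; taking the maximum over the finitely many thresholds that arise and comparing the normal forms at $k$ and at $k+\mu$ yields \ref{it:cor:main-corollary(i)} and \ref{it:cor:main-corollary(ii)} for all large $k$. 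The step I expect to require the most care is the uniformity in $M$: one must make sure the cut-off index $M+1$ depends on $M$ alone while $\mu$ is fixed once and for all, and that $\mathscr{U}_k$ is long enough for all large $k$ irrespective of how $|\mathscr{U}_k|$ behaves. Once Proposition~\ref{prop:periodicity(1)} and Lemma~\ref{lem:non-emptyness-of-multiples}\ref{it:lem:non-emptyness-of-multiples(iv)} are granted, no further input is needed.
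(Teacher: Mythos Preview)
Your proposal is correct and follows essentially the same route as the paper: both split off the trivial case $\Delta(\mathscr L)=\emptyset$ via Lemma~\ref{prop:basic(1)}\ref{it:prop:basic(1)(ii)} and Lemma~\ref{lem:non-emptyness-of-multiples}\ref{it:lem:non-emptyness-of-multiples(ii)}, then in the main case take $\mu=m$ from Proposition~\ref{prop:periodicity(1)} and use Lemma~\ref{lem:non-emptyness-of-multiples}\ref{it:lem:non-emptyness-of-multiples(iv)} to guarantee enough elements in $\mathscr U_k$ so that the first $M+1$ gaps are captured by the $\rho_{k,i}$ and $\lambda_{k,i}$. The only cosmetic differences are that the paper invokes part~\ref{it:cor:periodicity(1)(i)} of Proposition~\ref{prop:periodicity(1)} directly (rather than subtracting two instances of part~\ref{it:cor:periodicity(1)(ii)}) and applies Lemma~\ref{lem:non-emptyness-of-multiples}\ref{it:lem:non-emptyness-of-multiples(iv)} with $i=M+1$ rather than $M+2$, leaving the ``normal form'' description of $(\rho_k-\mathscr U_k)\cap\llb 0,M\rrb$ implicit.
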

\begin{proof}
We distinguish two cases, depending on whether the set of distances of $\LLc$ is empty.
\vskip 0.1cm
\textsc{Case 1:} $\DeltaF{\LLc} = \emptyset$. We infer from Lemma \ref{lem:non-emptyness-of-multiples}\ref{it:lem:non-emptyness-of-multiples(ii)} and our assumptions that $\UUc_{k} - \rho_{k} = \UUc_{k} - \lambda_{k} = \{0\}$ for all large $k$. Whence the conclusion is trivial (with $\mu := 1$).
\vskip 0.1cm
\textsc{Case 2:} $\DeltaF{\LLc} \ne \emptyset$.
By Proposition \ref{prop:periodicity(1)}, we can find an integer $m \ge 1$ with the property that, for every $i \in \mathbf N^+$, there is $\kappa_i \in \mathbf N$ such that, for all $k \ge \kappa_i$ and each $j \in \llb 1, i \rrb$,
\begin{equation}
\label{equ:another-brick-in-the-wall}
\rho_{k+m} - \rho_{k+m,j} = \rho_{k} - \rho_{k,j}
\quad\text{and}\quad
\lambda_{k+m} - \lambda_{k+m,j} = \lambda_{k} - \lambda_{k,j}\fixed[0.2]{\text{ }}.
\end{equation}
Now, fix $M \in \mathbf N$. By Lemma \ref{lem:non-emptyness-of-multiples}\ref{it:lem:non-emptyness-of-multiples(iv)}, there exists $k_M \ge \kappa_{M+1}$ such that $\UUc_{k,M+1} \ne \emptyset$ for $k \ge k_M$, which, together with \eqref{equ:another-brick-in-the-wall}, shows that, for all large $k$,
$
(\rho_{k} - \UUc_{k}) \cap \llb 0, M \rrb = (\rho_{k+m} - \UUc_{k+m}) \cap \llb 0, M \rrb$
and $
(\UUc_{k} - \lambda_{k}) \cap \llb 0, M \rrb = (\UUc_{k+m} - \lambda_{k+m}) \cap \llb 0, M \rrb$. This finishes the proof (with $\mu := m$).
\end{proof}
As was already mentioned, our main goal in the present work is to understand the structure of the unions $\UUc_k(\LLc)$ when $\LLc$ is a suitable collection of subsets of $\mathbf N$. To this end, we make the following:
\begin{definition}
\label{def:structure-theorem}
A family $\mathscr L \subseteq \mathcal P(\mathbf N)$ satisfies the \textit{Structure Theorem for Unions} if there are $d \in \mathbf N^+$ and $M \in \mathbf N$ such that
$(k + d \cdot \mathbf Z) \cap \llb \lambda_k + M, \rho_k - M \rrb \subseteq \mathscr U_k \subseteq k + d \cdot \mathbf Z
$ for all large $k \in \mathbf N$.
\end{definition}
Concretely, we will prove a characterization of when the Structure Theorem for Unions holds in the case $\mathscr L \subseteq \mathcal P(\mathbf N)$ is a subadditive family (Theorem \ref{th:characterization-of-structure-theorem}). But first, we need some preliminaries.
\begin{lemma}
\label{lem:delta-set-of-unions}
Let $\LLc \subseteq \mathcal P(\mathbf N)$ be a subadditive family. The following hold:
\begin{enumerate}[label={\rm (\roman{*})}]
\item\label{it:lem:delta-set-of-unions(i)} $\sup \Delta_{\cup}(\LLc) \le \sup \DeltaF{\LLc}$.
\item\label{it:lem:delta-set-of-unions(ii)} $\delta = \inf \fixed[-0.4]{\text{ }}\{\inf \Delta(\UUc_k): k \ge k_0\}$ for every $k_0 \in \mathbf N$. In particular, $\delta = \inf \Delta_{\cup}(\LLc)$.
\item\label{it:lem:delta-set-of-unions(iii)} If $\Delta(\LLc) \ne \emptyset$, then $\Delta_\cup(\LLc) \ne \emptyset$ and $\inf \Delta_\cup(\LLc) = \gcd \Delta_\cup(\LLc) = \gcd \Delta(\LLc) = \delta$.
\end{enumerate}
\end{lemma}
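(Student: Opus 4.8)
The plan is to establish the three items in turn, relying on Proposition~\ref{prop:generalized-deltas}, Corollary~\ref{cor:delta_sets}, Lemma~\ref{prop:basic(1)}, and Lemma~\ref{lem:non-emptyness-of-multiples}. Throughout I abbreviate $d := \sup\DeltaF{\LLc}$ and keep in mind the elementary fact that $L \subseteq \UUc_k$ whenever $k \in L \in \LLc$, together with the conventions $\sup\emptyset = 0$ and $\inf\emptyset = \infty$ (which keep all statements meaningful when some $\UUc_k$ is empty or a singleton, in which case $\Delta(\UUc_k) = \emptyset$).

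For \ref{it:lem:delta-set-of-unions(i)} I would prove the sharper assertion $\sup\Delta(\UUc_k) \le d$ for \emph{every} $k \in \NNb$; taking the supremum over $k$ then gives \ref{it:lem:delta-set-of-unions(i)}, since $\Delta_\cup(\LLc) = \bigcup_{k \ge 0}\Delta(\UUc_k)$. If $d = \infty$, or if $\DeltaF{\LLc} = \emptyset$ (so that $\UUc_k \subseteq \{k\}$ for all $k$ by Lemma~\ref{prop:basic(1)}\ref{it:prop:basic(1)(ii)}, whence $\Delta_\cup(\LLc) = \emptyset$), there is nothing to do; so assume $1 \le d < \infty$, fix $k$, and pick $e \in \Delta(\UUc_k)$, say with $\UUc_k \cap \llb \ell, \ell+e\rrb = \{\ell, \ell+e\}$ for some $\ell \in \UUc_k$; write $\ell' := \ell + e$. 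If some $L \in \LLc$ with $\{\ell, k\} \subseteq L$ satisfies $\ell < \sup L$, then the least element $s$ of $L$ exceeding $\ell$ lies in $\UUc_k$ and satisfies $s - \ell \in \Delta(L) \subseteq \DeltaF{\LLc}$, hence $s \le \ell + d$; as $\UUc_k$ has no element strictly between $\ell$ and $\ell'$, this forces $\ell' \le s \le \ell+d$, that is $e \le d$. The mirror argument applied to a set $L' \in \LLc$ with $\{\ell', k\} \subseteq L'$ and $\inf L' < \ell'$ (using the greatest element of $L'$ below $\ell'$) likewise gives $e \le d$. The only remaining possibility is that every $L \in \LLc$ through $\ell$ and $k$ has $\sup L = \ell$ (hence $k \le \ell$) while every $L' \in \LLc$ through $\ell'$ and $k$ has $\inf L' = \ell'$ (hence $k \ge \ell'$); but such sets do exist, because $\ell, \ell' \in \UUc_k$, and then $\ell' \le k \le \ell$ contradicts $\ell < \ell'$. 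So $e \le d$ in all cases.

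For \ref{it:lem:delta-set-of-unions(ii)}, the case $\DeltaF{\LLc} = \emptyset$ is trivial (both sides equal $\infty$, by Lemma~\ref{prop:basic(1)}\ref{it:prop:basic(1)(ii)}), so assume $\DeltaF{\LLc} \ne \emptyset$ and fix $k_0 \in \NNb$. By Corollary~\ref{cor:delta_sets}\ref{it:cor:delta_sets(ii)} every $\UUc_k$ lies in a single coset of $\delta\ZZb$, so each nonempty $\Delta(\UUc_k)$ is contained in $\delta\NNb^+$, and hence $\inf\{\inf\Delta(\UUc_k) : k \ge k_0\} \ge \delta$. For the opposite inequality I would exhibit, for each $k_0$, some $k \ge k_0$ with $\delta \in \Delta(\UUc_k)$: by Corollary~\ref{cor:delta_sets}\ref{it:cor:delta_sets(iii)} (with $q=1$) there are $\ell^\circ \in \NNb^+$ and $L^\circ \in \LLc$ with $\{\ell^\circ, \ell^\circ+\delta\} \subseteq L^\circ \subseteq \UUc_{\ell^\circ}$; since $\DeltaF{\LLc} \ne \emptyset$ forces $\LLc \not\subseteq \{\emptyset, \{0\}\}$, i.e.\ $\wp \ne 0$, Lemma~\ref{lem:non-emptyness-of-multiples}\ref{it:lem:non-emptyness-of-multiples(ii)} provides arbitrarily large $n$ with $\UUc_n \ne \emptyset$; picking such an $n$ with $\ell^\circ + n \ge k_0$ and $c \in \UUc_n$, Lemma~\ref{prop:basic(1)}\ref{it:prop:basic(1)(iv)} gives $\{\ell^\circ+c, \ell^\circ+c+\delta\} \subseteq \UUc_{\ell^\circ} + \UUc_n \subseteq \UUc_{\ell^\circ+n}$, and Corollary~\ref{cor:delta_sets}\ref{it:cor:delta_sets(ii)} shows $\UUc_{\ell^\circ+n} \cap \llb \ell^\circ+c, \ell^\circ+c+\delta\rrb = \{\ell^\circ+c, \ell^\circ+c+\delta\}$, so that $\delta \in \Delta(\UUc_{\ell^\circ+n})$. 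This yields $\inf\{\inf\Delta(\UUc_k) : k \ge k_0\} = \delta$, and the ``in particular'' clause is the case $k_0 = 0$ together with $\inf\Delta_\cup(\LLc) = \inf\{\inf\Delta(\UUc_k) : k \ge 0\}$.

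Finally, \ref{it:lem:delta-set-of-unions(iii)} is essentially a formality: if $\DeltaF{\LLc} \ne \emptyset$, then \ref{it:lem:delta-set-of-unions(ii)} gives $\inf\Delta_\cup(\LLc) = \delta < \infty$, so $\Delta_\cup(\LLc)$ is a nonempty subset of $\delta\NNb^+$ (by Corollary~\ref{cor:delta_sets}\ref{it:cor:delta_sets(ii)}) that contains $\delta$ (the witness found in the proof of \ref{it:lem:delta-set-of-unions(ii)}); hence $\delta \mid \gcd\Delta_\cup(\LLc) \mid \delta$, i.e.\ $\gcd\Delta_\cup(\LLc) = \delta$, while $\gcd\DeltaF{\LLc} = \delta$ is the last sentence of Proposition~\ref{prop:generalized-deltas}. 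I expect the only genuinely delicate point to be the ``remaining possibility'' in \ref{it:lem:delta-set-of-unions(i)}: one must notice that a set of $\LLc$ witnessing $\ell \in \UUc_k$ and a set witnessing $\ell' \in \UUc_k$ both contain $k$, and so squeeze $k$ between $\ell'$ and $\ell$ — everything else reduces to routine manipulations of the $\inf/\sup$ conventions.
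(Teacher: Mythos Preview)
Your proof is correct and follows essentially the same strategy as the paper's: in \ref{it:lem:delta-set-of-unions(i)} you relate a gap in $\UUc_k$ to a gap in some $L \in \LLc$ containing $k$ (the paper streamlines your three-way case split by first observing that $k \in \UUc_k$ forces $k \le \ell$ or $k \ge \ell'$, then picking a single witnessing $L$); in \ref{it:lem:delta-set-of-unions(ii)} both arguments combine Corollary~\ref{cor:delta_sets}\ref{it:cor:delta_sets(ii)} for the lower bound with an explicit $\UUc_k$ containing an AP of difference $\delta$ for the upper bound. Your treatment of \ref{it:lem:delta-set-of-unions(iii)} is in fact slightly more direct than the paper's, which instead shows that $\{\UUc_k : k \in \NNb\}$ is itself subadditive and reapplies Proposition~\ref{prop:generalized-deltas}; you avoid this by using that $\Delta_\cup(\LLc) \subseteq \delta \cdot \NNb^+$ together with $\delta \in \Delta_\cup(\LLc)$.
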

\begin{proof}
\ref{it:lem:delta-set-of-unions(i)} Pick $k \in \mathbf N$. It suffices to show that $\sup \Delta(\UUc_k) \le \sup \DeltaF{\LLc}$. If $\Delta(\UUc_k)$ is empty, this is obvious. Otherwise, let $d \in \Delta(\UUc_k)$. Then, there exists $x \in \mathbf N$ such that $\UUc_k \cap \llb x, x + d \fixed[0.2]{\text{ }}\rrb = \{x, x + d\}$, whence it is clear that $k \le x$ or $x+d \le k$. Accordingly, we can find $L \in \mathscr L$ such that either  $\{k, x + d\} \subseteq L$ (if $k \le x$) or $\{x, k\} \subseteq L$ (if $x+d \le k$). It follows
$$
L \cap \llb x, x+d \fixed[0.2]{\text{ }} \rrb \subseteq \UUc_k \cap \llb x, x+d \fixed[0.2]{\text{ }}\rrb = \{x, x+d\},
$$
which gives $d \le \sup \Delta(L) \leq \sup \Delta(\LLc)$ and leads to the desired inequality.

\ref{it:lem:delta-set-of-unions(ii)} Fix $k_0 \in \mathbf N$, and set $\delta_{k_0} := \inf \fixed[-0.4]{\text{ }}\{\inf \Delta(\UUc_k): k \ge k_0\}$. By Lemma \ref{prop:basic(1)}\ref{it:prop:basic(1)(ii)}, $\DeltaF{\LLc} = \emptyset$ if and only if $\Delta(\UUc_k) = \emptyset$ for all $k$. So, if $\Delta(\LLc)$ is empty, the conclusion is trivial, because $\delta = \delta_{k_0} = \infty$. Consequently, we assume from now on that $\Delta(\LLc) \ne \emptyset$.

Then $\delta \in \mathbf N^+$ and $\delta_{k_0} = \inf \Delta(\UUc_{\kappa_0}) < \infty$ for some $\kappa_0 \ge k_0$, which, in turn, implies that there is $x \in \mathbf N$ such that $\UUc_{\kappa_0} \cap \llb x, x+\delta_{k_0} \rrb = \{x, x+\delta_{k_0}\}$. By Corollary \ref{cor:delta_sets}\ref{it:cor:delta_sets(ii)}, this yields $\delta \mid \delta_{k_0}$, and hence $\delta \le \delta_{k_0}$.

On the other hand, we get from Corollary \ref{cor:delta_sets}\ref{it:cor:delta_sets(iii)} and Lemma \ref{prop:basic(1)}\ref{it:prop:basic(1)(iv)} that $\ell + \delta \cdot \llb 0, k_0+1 \rrb \subseteq \UUc_{\ell}$ for some integer $\ell \ge k_0$. Thus we obtain $\delta_{k_0} \le \inf \Delta\fixed[-0.1]{\text{ }}\bigl(\UUc_{\ell}\bigr) \le \delta \le \delta_{k_0}$, which completes the proof, insofar as it is straightforward that $\inf \Delta_{\cup}(\LLc) = \inf \fixed[-0.4]{\text{ }}\{\inf \Delta(\UUc_k): k \in \mathbf N\}$.

\ref{it:lem:delta-set-of-unions(iii)} It is enough to prove that $\gcd \Delta_\cup(\LLc) = \inf \Delta_\cup(\LLc)$: The rest will follow from \ref{it:lem:delta-set-of-unions(ii)} and Prop\-o\-si\-tion \ref{prop:generalized-deltas}. For this, assume $\Delta(\LLc) \ne \emptyset$ and set $\mathscr L_{\cup} := \{\UUc_k: k \in \mathbf N\}$. Clearly, $\mathscr L_\cup$ is a subfamily of $\mathcal P(\mathbf N)$ with non-empty set of distances, and we infer from Lemma \ref{prop:basic(1)}\ref{it:prop:basic(1)(iv)} that $\mathscr L_\cup$ is, in fact, subadditive. So, again by Prop\-o\-si\-tion \ref{prop:generalized-deltas}, we have $\gcd \Delta_\cup(\LLc) = \inf \Delta_\cup(\LLc)$.
\end{proof}
\begin{proposition}
\label{prop:the-difference-is-necessarily-delta}
Let $\mathscr L \subseteq \mathcal P(\mathbf N)$ be a subadditive, primitive family with $\Delta(\LLc) \ne \emptyset$, and suppose there exist $M \in \mathbf N$, $d \in \mathbf N^+$, and infinitely many $k$ for which $(k + d \cdot \mathbf Z) \cap \llb \lambda_{k} + M, \rho_{k} - M \rrb \subseteq \UUc_{k} \subseteq k + d \cdot \mathbf Z$. Then $d = \delta$.
\end{proposition}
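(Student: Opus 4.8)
The plan is to establish the two divisibilities $d \mid \delta$ and $\delta \mid d$; since both quantities are positive integers, this forces $d = \delta$. Write $S$ for the infinite set of $k$ at which the displayed double inclusion holds. Before either step I would record two facts. Since $\LLc$ is primitive we have $\wp(\LLc) = 1$, so Lemma~\ref{lem:non-emptyness-of-multiples}\ref{it:lem:non-emptyness-of-multiples(ii)} gives some $k_1$ with $\UUc_j \ne \emptyset$ for all $j \ge k_1$, and then Lemma~\ref{prop:basic(1)}\ref{it:prop:basic(1)(iv)} (with $i = j = 1$) yields $\UUc_j + \UUc_h \subseteq \UUc_{j+h}$ for all $j, h$; in particular, picking $r \in \UUc_h$ one gets the translate embedding $\UUc_j + r \subseteq \UUc_{j+h}$. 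Since $\DeltaF{\LLc} \ne \emptyset$, Corollary~\ref{cor:delta_sets}\ref{it:cor:delta_sets(iii)} supplies, for each $q \in \mathbf N$, an $\ell \in \mathbf N^+$ with $\ell + \delta \cdot \llb 0, q \rrb \subseteq \UUc_\ell$.

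For $d \mid \delta$ I would take $q = 1$ above to get $\ell_0$ with $\{\ell_0, \ell_0 + \delta\} \subseteq \UUc_{\ell_0}$, then choose $k \in S$ with $k - \ell_0 \ge k_1$ (possible since $S$ is infinite), pick $r \in \UUc_{k-\ell_0}$, and apply the translate embedding to land $\{\ell_0 + r, \ell_0 + \delta + r\}$ inside $\UUc_k \subseteq k + d \cdot \mathbf Z$; the two elements being congruent modulo $d$ forces $d \mid \delta$.

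For $\delta \mid d$ it is enough to produce a single $k$ with $d \in \Delta(\UUc_k)$, since then $d \in \Delta_\cup(\LLc)$ and Lemma~\ref{lem:delta-set-of-unions}\ref{it:lem:delta-set-of-unions(iii)} gives $\delta = \gcd \Delta_\cup(\LLc) \mid d$. Here I would first fix $q$ with $q\delta \ge 2M + 2d$ and, for the corresponding $\ell$ from Corollary~\ref{cor:delta_sets}\ref{it:cor:delta_sets(iii)}, choose $k \in S$ with $k - \ell \ge k_1$ and $r \in \UUc_{k-\ell}$: the translate embedding puts the progression $(\ell + r) + \delta \cdot \llb 0, q \rrb$ into $\UUc_k$, whence $\lambda_k \le \ell + r$ and (if $\rho_k$ is finite) $\rho_k \ge \ell + r + q\delta$, so $\rho_k - \lambda_k \ge q\delta \ge 2M + 2d$. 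Then the interval $\llb \lambda_k + M, \rho_k - M \rrb$ has length at least $2d$, so it contains two consecutive members $a < a+d$ of the coset $k + d \cdot \mathbf Z$; by the definition of $S$ both lie in $\UUc_k$, and because $\UUc_k \subseteq k + d \cdot \mathbf Z$ nothing of $\UUc_k$ lies strictly between them, so $\UUc_k \cap \llb a, a+d \rrb = \{a, a+d\}$ and $d \in \Delta(\UUc_k)$, as needed.

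The steps are individually short; the main thing to be careful about is that finite elasticity is \emph{not} assumed here, so one must separately observe that when $\rho_k = \infty$ the interval $\llb \lambda_k + M, \rho_k - M \rrb$ is a half-line and the argument of the last step applies \emph{a fortiori} (the inequality $\rho_k - \lambda_k \ge 2M + 2d$ being then automatic). I do not foresee any genuine obstacle beyond this bookkeeping and the harmless off-by-one care needed to guarantee two $d$-consecutive coset elements inside the central interval.
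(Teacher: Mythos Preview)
Your proposal is correct and follows essentially the same approach as the paper: both arguments show $d \in \Delta(\UUc_k)$ for some $k$ (yielding $\delta \mid d$ via Lemma~\ref{lem:delta-set-of-unions}\ref{it:lem:delta-set-of-unions(iii)}) and then use a translate of a $\delta$-progression inside some $\UUc_k \subseteq k + d \cdot \mathbf Z$ to obtain $d \mid \delta$. The only cosmetic differences are that the paper invokes Lemma~\ref{lem:non-emptyness-of-multiples}\ref{it:lem:non-emptyness-of-multiples(iv)} directly to make $k$ far from both $\lambda_k$ and $\rho_k$ (where you instead embed a long $\delta$-AP to force $\rho_k - \lambda_k$ large), and for the $d \mid \delta$ direction the paper works inside the described middle segment $k_0 + d \cdot \llb -x_{k_0}, y_{k_0} \rrb$ rather than appealing, as you do more simply, to the outer inclusion $\UUc_k \subseteq k + d \cdot \mathbf Z$.
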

\begin{proof}
Since $\Delta(\LLc)$ is non-empty, $\delta$ is a positive integer. Moreover, $\LLc$ being a subadditive family implies by Corollary \ref{cor:delta_sets}\ref{it:cor:delta_sets(iii)} that there are $\ell \in \mathbf N^+$ and $L \in \mathscr L$ for which
\begin{equation}
\label{equ:set-with-long-AP-with-difference-delta}
\ell + \delta \cdot \llb 0, M+1 \rrb \subseteq L \subseteq \UUc_\ell.
\end{equation}
Similarly, we obtain from Lemma \ref{lem:non-emptyness-of-multiples}\ref{it:lem:non-emptyness-of-multiples(iv)} that there exists $\kappa_0 \in \mathbf N$ such that
\begin{equation}
\label{equ:various-conditions}
\UUc_{k} \ne \emptyset
\quad\text{and}\quad
\lambda_{k} + (d+2M)\delta \le k \le \rho_{k} - (d+2M)\delta, \text{ for }k \ge \kappa_0.
\end{equation}
So, considering that, by hypothesis, $(k + d \cdot \mathbf Z) \cap \llb \lambda_{k} + M, \rho_{k} - M \rrb \subseteq \UUc_{k} \subseteq k + d \cdot \bf Z$ for infinitely many $k$, we infer from \eqref{equ:various-conditions} that
\begin{equation}
\label{equ:convenient-containment}
\UUc_{k_0} \cap \llb \lambda_{k_0} + M, \rho_{k_0} - M \rrb = k_0 + d \cdot \llb -x_{k_0}, y_{k_0} \rrb,
\end{equation}
for some $k_0 \ge \kappa_0+\ell$ and $x_{k_0}, y_{k_0} \in \mathbf N^+$. It follows $d \in \Delta(\UUc_{k_0}) \subseteq \Delta_\cup(\LLc)$, which, combined with Lemma \ref{lem:delta-set-of-unions}\ref{it:lem:delta-set-of-unions(iii)}, proves $\delta \le d$. Consequently, we are left to show $d \le \delta$.

To this end, note that $k_0 - \ell \in \UUc_{k_0 - \ell}$ (because $k_0 - \ell \ge \kappa_0$, and by construction $\UUc_{k} \ne \emptyset$ for $k \ge \kappa_0$). Therefore, we get from \eqref{equ:set-with-long-AP-with-difference-delta} and Lemma \ref{prop:basic(1)}\ref{it:prop:basic(1)(iv)} that
$$
k_0 + \delta \cdot \llb 0, M+1 \rrb \subseteq k_0 - \ell + \UUc_\ell \subseteq \UUc_{k_0 - \ell} + \UUc_{\ell} \subseteq \UUc_{k_0},
$$
which, together with \eqref{equ:various-conditions} and \eqref{equ:convenient-containment}, yields
$$
k_0 + \delta \cdot \llb 0, M+1 \rrb \subseteq \UUc_{k_0} \cap \llb k_0, \rho_{k_0} - M \rrb \subseteq \UUc_{k_0} \cap \llb \lambda_{k_0} + M, \rho_{k_0} - M \rrb = k_0 + d \cdot \llb x_{k_0}, y_{k_0} \rrb.
$$
In particular, we see from here that $\delta \cdot \llb 0, M+1 \rrb \subseteq d \cdot \mathbf Z$, which is possible only if $d \le \delta$.
\end{proof}
The next result is essentially a revision of \cite[Lemma 2.12]{FGKT}.
\begin{lemma}
\label{lem:reduction_to_the_upper_part}
Let $\LLc \subseteq \mathcal P(\mathbf N)$ be a subadditive, primitive family, and let $d \in \mathbf N^+$.
Then are equivalent:
\begin{enumerate}[label={\rm (\alph{*})}]
\item\label{it:lem:reduction_to_the_upper_part(i)} There is $M \in \mathbf N$ such that $(k + d \cdot \mathbf Z) \cap \llb \lambda_{k} + M, \rho_{k} - M \rrb \subseteq \UUc_{k}$ for all large $k$.
\item\label{it:lem:reduction_to_the_upper_part(iii)} There is $M^{\prime} \in \mathbf N$ such that $(k + d \cdot \mathbf Z) \cap \llb k, \rho_{k} - M^{\prime} \rrb \subseteq \UUc_{k}$ for all large $k$.
\end{enumerate}
\end{lemma}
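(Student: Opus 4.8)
The plan is to dispose of the degenerate case $\DeltaF{\LLc} = \emptyset$ first, and then prove the two implications separately. If $\DeltaF{\LLc} = \emptyset$, then Lemma \ref{prop:basic(1)}\ref{it:prop:basic(1)(ii)} gives $\UUc_k \subseteq \{k\}$ for every $k$, while primitivity together with Lemma \ref{lem:non-emptyness-of-multiples}\ref{it:lem:non-emptyness-of-multiples(ii)} gives $\UUc_k \ne \emptyset$ for all large $k$; hence $\UUc_k = \{k\}$ eventually, and both \ref{it:lem:reduction_to_the_upper_part(i)} and \ref{it:lem:reduction_to_the_upper_part(iii)} hold (with $M = M' = 1$, so that the relevant intersections with the discrete intervals are empty from some point on). So assume from now on that $\DeltaF{\LLc} \ne \emptyset$, whence $\delta \in \mathbf N^+$, and fix $k_0 \in \mathbf N$ with $\UUc_j \ne \emptyset$ for all $j \ge k_0$, again via Lemma \ref{lem:non-emptyness-of-multiples}\ref{it:lem:non-emptyness-of-multiples(ii)}.

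For \ref{it:lem:reduction_to_the_upper_part(i)} $\Rightarrow$ \ref{it:lem:reduction_to_the_upper_part(iii)}, I would take $M$ as in \ref{it:lem:reduction_to_the_upper_part(i)} and prove that \ref{it:lem:reduction_to_the_upper_part(iii)} holds with $M' := M$. By Lemma \ref{lem:non-emptyness-of-multiples}\ref{it:lem:non-emptyness-of-multiples(iv)} applied with $i := M+1$, for all large $k$ one has $\UUc_{k,M+1} \ne \emptyset$ and $\lambda_{k,M+1} \le k$, so $\UUc_k$ contains at least $M+1$ distinct elements that are $\le k$, and therefore $\lambda_k \le k - M$. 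Consequently the discrete interval $\llb k, \rho_k - M \rrb$ is contained in $\llb \lambda_k + M, \rho_k - M \rrb$ for all large $k$; intersecting with $k + d \cdot \mathbf Z$ and invoking \ref{it:lem:reduction_to_the_upper_part(i)} yields the desired inclusion.

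For \ref{it:lem:reduction_to_the_upper_part(iii)} $\Rightarrow$ \ref{it:lem:reduction_to_the_upper_part(i)}, fix $M' \in \mathbf N$ and $K \in \mathbf N$ such that $(k + d \cdot \mathbf Z) \cap \llb k, \rho_k - M' \rrb \subseteq \UUc_k$ for all $k \ge K$, and set $M := \max(M', K, k_0)$; I claim \ref{it:lem:reduction_to_the_upper_part(i)} holds with this $M$. So let $k$ be large and $x \in (k + d \cdot \mathbf Z) \cap \llb \lambda_k + M, \rho_k - M \rrb$. Since $\lambda_k \ge 0$ we get $x \ge M \ge K$, so hypothesis \ref{it:lem:reduction_to_the_upper_part(iii)} is also available at the index $x$. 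If $x \ge k$, then $x \le \rho_k - M \le \rho_k - M'$, so $x \in (k + d \cdot \mathbf Z) \cap \llb k, \rho_k - M' \rrb \subseteq \UUc_k$. If $x < k$, then by the reflection $x \in \UUc_k \Leftrightarrow k \in \UUc_x$ of Lemma \ref{prop:basic(1)}\ref{it:prop:basic(1)(i)} it suffices to show $k \in \UUc_x$; since $k > x$ and $k \equiv x \pmod d$, applying \ref{it:lem:reduction_to_the_upper_part(iii)} at the index $x$ reduces this to $k \le \rho_x - M'$. Now $\lambda_k \in \UUc_k$ gives some $L_0 \in \LLc$ with $\{\lambda_k, k\} \subseteq L_0$, hence $k \in L_0 \subseteq \UUc_{\lambda_k}$; moreover $x - \lambda_k \ge M \ge k_0$, so $\UUc_{x - \lambda_k} \ne \emptyset$ and thus $x - \lambda_k \in \UUc_{x - \lambda_k}$. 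Then Lemma \ref{prop:basic(1)}\ref{it:prop:basic(1)(iv)} gives $k + (x - \lambda_k) \in \UUc_{x - \lambda_k} + \UUc_{\lambda_k} \subseteq \UUc_x$, so $\rho_x \ge k + (x - \lambda_k) \ge k + M \ge k + M'$, which is what was needed.

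The step I expect to be the main obstacle is the case $x < k$ of the second implication: one wants to recover $x \in \UUc_k$ from hypothesis \ref{it:lem:reduction_to_the_upper_part(iii)} applied at the smaller index $x$, but a priori $x$ only exceeds $\lambda_k + M$, and $\lambda_k$ need not grow with $k$ (it can stay bounded, for instance when $\rho(\LLc) = \infty$), so $x$ itself need not be large. The resolution is to choose $M$ so that it dominates the threshold $K$ implicit in \ref{it:lem:reduction_to_the_upper_part(iii)}: then $x \ge M \ge K$ automatically, and the remaining slack $x - \lambda_k \ge M \ge M'$ is exactly what lets subadditivity push an element of $\UUc_x$ beyond $k + M'$. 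Everything else is routine bookkeeping.
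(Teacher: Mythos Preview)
Your proof is correct and follows essentially the same approach as the paper's: both handle the empty-$\Delta$ case trivially, derive \ref{it:lem:reduction_to_the_upper_part(i)} $\Rightarrow$ \ref{it:lem:reduction_to_the_upper_part(iii)} from $\lambda_k + M \le k$ (via Lemma~\ref{lem:non-emptyness-of-multiples}\ref{it:lem:non-emptyness-of-multiples(iv)}), and for \ref{it:lem:reduction_to_the_upper_part(iii)} $\Rightarrow$ \ref{it:lem:reduction_to_the_upper_part(i)} take $M := \max(M',K,k_0)$, treat $x < k$ by reflecting to the index $x$, and use subadditivity at $\lambda_k + (x-\lambda_k)$ to get $\rho_x \ge k + (x-\lambda_k) \ge k + M'$. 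The only cosmetic difference is that the paper bounds $\rho_x$ via Lemma~\ref{prop:basic(1)}\ref{it:prop:basic(1)(v)} rather than exhibiting an explicit element of $\UUc_x$ via Lemma~\ref{prop:basic(1)}\ref{it:prop:basic(1)(iv)} as you do.
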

\begin{proof}
If $\Delta(\LLc)$ is empty, the equivalence of conditions \ref{it:lem:reduction_to_the_upper_part(i)} and \ref{it:lem:reduction_to_the_upper_part(iii)} is trivial, since $\UUc_k \subseteq \{k\}$ for all $k$. So, assume from now on that $\Delta(\LLc)$ is non-empty. Then we get from Lemma \ref{lem:non-emptyness-of-multiples}\ref{it:lem:non-emptyness-of-multiples(iv)} that, for every $i \in \mathbf N^+$, there is $k_i \in \mathbf N$ such that
\begin{equation}
\label{equ:some_conditions}
\UUc_{k, i} \ne \emptyset
\quad\text{and}\quad
\lambda_{k} + id \le k \le \rho_{k} - id, \text{ for }k \ge k_i.
\end{equation}
Based on these premises, we proceed to show that \ref{it:lem:reduction_to_the_upper_part(i)} $\Rightarrow$ \ref{it:lem:reduction_to_the_upper_part(iii)} $\Rightarrow$ \ref{it:lem:reduction_to_the_upper_part(i)}.

\ref{it:lem:reduction_to_the_upper_part(i)} $\Rightarrow$ \ref{it:lem:reduction_to_the_upper_part(iii)}: By hypothesis, $(k + d \cdot \mathbf Z) \cap \llb \lambda_k + M, \rho_k - M \rrb \subseteq \UUc_{k}$ for all large $k$. Also, we have by \eqref{equ:some_conditions} that $\lambda_{k} + M \le k \le \rho_{k} - M$ for $k \ge k_M$. Therefore, it is obvious that $(k + d \cdot \mathbf Z)  \cap \llb k, \rho_{k} - M \rrb \subseteq \UUc_{k}$ for all but finitely many $k$, which is enough to conclude (with $M^\prime := M$).

\ref{it:lem:reduction_to_the_upper_part(iii)} $\Rightarrow$ \ref{it:lem:reduction_to_the_upper_part(i)}:
By assumption, there exists $k_0 \in \mathbf N^+$ such that $(k + d \cdot \mathbf Z) \cap \llb k, \rho_{k} - M^\prime \rrb \subseteq \UUc_{k}$ for $k \ge k_0$. Accordingly, fix $k \ge \max(k_0, k_1)$, and let
$M := \max (k_0, k_1, M^\prime) \in \mathbf N^+$.
It suffices to prove that
$$
\mathscr V_k := (k + d \cdot \mathbf Z) \cap \llb \lambda_{k} + M, k \rrb \subseteq \UUc_{k}.
$$
To this end, notice that, by \eqref{equ:some_conditions},  $\UUc_k \ne \emptyset$ (because $k \ge k_1$) and $\lambda_{k} \in \mathbf N$, and let $q \in \mathscr V_k$. Then $\UUc_{q - \lambda_{k}}$ is non-empty, since $q - \lambda_k \ge M \ge k_1$. In addition, we obtain from Lemma \ref{prop:basic(1)}\ref{it:prop:basic(1)(i)} that $
k \in \mathscr U_{\lambda_{k}} \ne \emptyset$ and $
q \le k \le \rho_{\lambda_{k}}$. Consequently, we infer from Lemma \ref{prop:basic(1)}\ref{it:prop:basic(1)(v)} that
\begin{equation}
\label{equ:yet-another-very-little-effort}
q \le k \le k + M \le k + (q - \lambda_{k}) \le \rho_{\lambda_{k}} + \rho_{q - \lambda_{k}}
\le \rho_q.
\end{equation}
On the other hand, it is clear from the above that $q \ge M \ge k_0$. It follows
$$
(q + d \cdot \mathbf Z) \cap \llb q, \rho_q - M^\prime \rrb \subseteq \UUc_q,
$$
and hence
$k \in \UUc_q$, because $d \mid q - k$ and we have by \eqref{equ:yet-another-very-little-effort} that $q \le k \le \rho_q - M \le \rho_q - M^\prime$. By Lemma \ref{prop:basic(1)}\ref{it:prop:basic(1)(i)}, this implies $q \in \UUc_{k}$. So we are done, since $q \in \mathscr V_k$ was arbitrary.
\end{proof}
\begin{theorem}
\label{th:characterization-of-structure-theorem}
Let $\LLc \subseteq \mathcal P(\mathbf N)$ be a subadditive, primitive family with non-empty set of distances, and let $D := \limsup_k \sup \Delta(\UUc_k)$.
Then the following are equivalent:
\begin{enumerate}[label={\rm (\alph{*})}]
\item\label{it:th:structure_theorem(i)} $\LLc$ satisfies the Structure Theorem for Unions.
\item\label{it:th:structure_theorem(ii)} $D \in \mathbf N^+$ and there exist $\ell \in \mathbf N^+$ and $N \in \mathbf N$ such that $\ell + \delta \cdot \llb 0, D \rrb \subseteq \UUc_\ell$ and $(k + \delta \cdot \mathbf Z) \cap \llb \rho_{k - \ell} + \ell, \rho_k - N \rrb \subseteq \UUc_k$ for all large $k \in \mathbf N$.
\end{enumerate}
In particular, condition \ref{it:th:structure_theorem(ii)} is satisfied if $D \in \mathbf N^+$ and $\rho_k = \infty$ for some $k \in \mathbf N$.
\end{theorem}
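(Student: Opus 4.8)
The plan is to prove the two implications $\ref{it:th:structure_theorem(ii)}\Rightarrow\ref{it:th:structure_theorem(i)}$ and $\ref{it:th:structure_theorem(i)}\Rightarrow\ref{it:th:structure_theorem(ii)}$ separately, and then settle the closing assertion directly. Throughout I would use that $\delta\in\mathbf N^+$ (because $\DeltaF{\LLc}\ne\emptyset$), that $\UUc_k\subseteq k+\delta\cdot\mathbf Z$ for every $k$ by Corollary \ref{cor:delta_sets}\ref{it:cor:delta_sets(ii)}, that $\UUc_k\ne\emptyset$ with $\lambda_k\le k\le\rho_k$ for all large $k$ (and, applying Lemma \ref{lem:non-emptyness-of-multiples}\ref{it:lem:non-emptyness-of-multiples(iv)} with $\wp=1$ and arbitrarily large parameter $i$, even $\rho_k-\lambda_k\to\infty$), and that — by Proposition \ref{prop:the-difference-is-necessarily-delta} — if the Structure Theorem for Unions holds then it holds with difference $\delta$. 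Finally, Lemma \ref{lem:reduction_to_the_upper_part} (with $d=\delta$) tells me that, for the ``$\supseteq$'' half of the Structure Theorem, it suffices to control $\UUc_k$ above $k$, i.e. to have $(k+\delta\cdot\mathbf Z)\cap\llb k,\rho_k-M\rrb\subseteq\UUc_k$ for all large $k$ and some $M$; combined with $\UUc_k\subseteq k+\delta\cdot\mathbf Z$, this is equivalent to the Structure Theorem (with difference $\delta$).

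\emph{From $\ref{it:th:structure_theorem(ii)}$ to $\ref{it:th:structure_theorem(i)}$.} Fix $\ell$ and $N$ as in \ref{it:th:structure_theorem(ii)}. Since $D<\infty$, there is $k_1$ with $\sup\Delta(\UUc_j)\le D$ for $j\ge k_1$, i.e. consecutive elements of $\UUc_j$ differ by at most $D$. Let $k$ be large. For every $x\in\UUc_{k-\ell}$, Lemma \ref{prop:basic(1)}\ref{it:prop:basic(1)(iv)} together with $\ell+\delta\cdot\llb 0,D\rrb\subseteq\UUc_\ell$ gives $x+\ell+\delta\cdot\llb 0,D\rrb\subseteq\UUc_{k-\ell}+\UUc_\ell\subseteq\UUc_k$; since successive elements of $\UUc_{k-\ell}$ differ by at most $D\le D\delta$, the union of these length-$(D+1)$ progressions (all in the residue class $k$ mod $\delta$) contains every element of $k+\delta\cdot\mathbf Z$ between $\lambda_{k-\ell}+\ell$ and $\rho_{k-\ell}+\ell$, that is $(k+\delta\cdot\mathbf Z)\cap\llb\lambda_{k-\ell}+\ell,\rho_{k-\ell}+\ell\rrb\subseteq\UUc_k$. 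Combining this with $(k+\delta\cdot\mathbf Z)\cap\llb\rho_{k-\ell}+\ell,\rho_k-N\rrb\subseteq\UUc_k$ from \ref{it:th:structure_theorem(ii)}, and using $\lambda_{k-\ell}+\ell\le(k-\ell)+\ell=k$, we obtain $(k+\delta\cdot\mathbf Z)\cap\llb k,\rho_k-N\rrb\subseteq\UUc_k$ for all large $k$; by Lemma \ref{lem:reduction_to_the_upper_part} and $\UUc_k\subseteq k+\delta\cdot\mathbf Z$ this yields the Structure Theorem for Unions.

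\emph{From $\ref{it:th:structure_theorem(i)}$ to $\ref{it:th:structure_theorem(ii)}$.} Fix $M$ with $(k+\delta\cdot\mathbf Z)\cap\llb\lambda_k+M,\rho_k-M\rrb\subseteq\UUc_k\subseteq k+\delta\cdot\mathbf Z$ for all large $k$. First, $D\in\mathbf N^+$: as $\rho_k-\lambda_k\to\infty$, for all large $k$ the ``core'' $\UUc_k\cap\llb\lambda_k+M,\rho_k-M\rrb$ is a non-degenerate progression of difference $\delta$, so every gap of $\UUc_k$ other than $\delta$ is contained in one of the two windows $\llb\lambda_k,\lambda_k+M+\delta\rrb$, $\llb\rho_k-M-\delta,\rho_k\rrb$, hence has size $\le M+\delta$; therefore $\sup\Delta(\UUc_k)\le M+\delta$ eventually, and since $\sup\Delta(\UUc_k)\ge\delta$ eventually (because $|\UUc_k|\ge 2$ eventually and $\UUc_k\subseteq k+\delta\cdot\mathbf Z$), we get $\delta\le D\le M+\delta<\infty$. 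Next, Corollary \ref{cor:delta_sets}\ref{it:cor:delta_sets(iii)} with $q:=D$ produces $\ell\in\mathbf N^+$ with $\ell+\delta\cdot\llb 0,D\rrb\subseteq\UUc_\ell$, the first clause of \ref{it:th:structure_theorem(ii)}; fix this $\ell$. For the second clause, Lemma \ref{lem:reduction_to_the_upper_part} gives $M^\prime\in\mathbf N$ with $(k+\delta\cdot\mathbf Z)\cap\llb k,\rho_k-M^\prime\rrb\subseteq\UUc_k$ for all large $k$; and since $\rho_{k-\ell}\ge k-\ell$ for all large $k$ by Lemma \ref{lem:non-emptyness-of-multiples}\ref{it:lem:non-emptyness-of-multiples(iv)}, we have $\rho_{k-\ell}+\ell\ge k$, so $(k+\delta\cdot\mathbf Z)\cap\llb\rho_{k-\ell}+\ell,\rho_k-M^\prime\rrb\subseteq(k+\delta\cdot\mathbf Z)\cap\llb k,\rho_k-M^\prime\rrb\subseteq\UUc_k$, and \ref{it:th:structure_theorem(ii)} holds with $N:=M^\prime$.

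\emph{The final assertion, and the main difficulty.} If $D\in\mathbf N^+$ and $\rho_\kappa=\infty$ for some $\kappa$, then $\rho_k=\infty$ for all large $k$ by Lemma \ref{lem:non-emptyness-of-multiples}\ref{it:lem:non-emptyness-of-multiples(iiiq)} (with $\wp=1$); choosing $\ell$ as above via Corollary \ref{cor:delta_sets}\ref{it:cor:delta_sets(iii)} with $q:=D$, for all large $k$ we get $\rho_{k-\ell}=\infty$, so $\llb\rho_{k-\ell}+\ell,\rho_k-N\rrb=\emptyset$ for every $N$ and the second clause of \ref{it:th:structure_theorem(ii)} holds vacuously, giving \ref{it:th:structure_theorem(ii)}. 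I expect the main obstacle to be the ``tiling'' step in $\ref{it:th:structure_theorem(ii)}\Rightarrow\ref{it:th:structure_theorem(i)}$ — checking that the translates $x+\ell+\delta\cdot\llb 0,D\rrb$, as $x$ ranges over $\UUc_{k-\ell}$, cover $(k+\delta\cdot\mathbf Z)\cap\llb\lambda_{k-\ell}+\ell,\rho_{k-\ell}+\ell\rrb$ without leaving a hole — and this is precisely where the choice $D=\limsup_k\sup\Delta(\UUc_k)$, rather than merely $\delta$, is indispensable, since it is what bounds the gaps of $\UUc_{k-\ell}$ (eventually) by $D$.
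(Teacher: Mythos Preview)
Your proof is correct and follows essentially the same route as the paper's. Both directions hinge on the same ideas: for \ref{it:th:structure_theorem(ii)}$\Rightarrow$\ref{it:th:structure_theorem(i)}, the paper forms the sumset $\UUc^\ast+\UUc_{k-\ell}$ with $\UUc^\ast:=\ell+\delta\cdot\llb 0,D\rrb$ and observes (exactly as in your ``tiling'' step) that since $\sup\Delta(\UUc_{k-\ell})\le D$ this sumset is a full AP with difference $\delta$ running from $\ell+\lambda_{k-\ell}$ past $\ell+\rho_{k-\ell}$, then glues it with the hypothesis interval and invokes Lemma~\ref{lem:reduction_to_the_upper_part}; for \ref{it:th:structure_theorem(i)}$\Rightarrow$\ref{it:th:structure_theorem(ii)}, the paper bounds $\sup\Delta(\UUc_k)\le M+\delta$ just as you do, picks $\ell$ via Corollary~\ref{cor:delta_sets}\ref{it:cor:delta_sets(iii)}, and then uses $\lambda_k+M\le k\le\rho_{k-\ell}+\ell$ directly (rather than passing through Lemma~\ref{lem:reduction_to_the_upper_part} as you do) to get the required interval inclusion with $N:=M$. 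The final assertion is handled identically via Lemma~\ref{lem:non-emptyness-of-multiples}\ref{it:lem:non-emptyness-of-multiples(iiiq)}.
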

\begin{proof}
Since $\Delta(\LLc)$ is non-empty, $\delta$ is a positive integer and, by Proposition \ref{lem:delta-set-of-unions}\ref{it:lem:delta-set-of-unions(iii)}, $\Delta_\cup(\LLc) \subseteq \delta \cdot \mathbf N^+$. Consequently, we see that $\UUc_k \subseteq k + \delta \cdot \mathbf Z$ for all $k \in \mathbf N$. Moreover, $\mathscr L$ is a primitive family, so we obtain from Lemma \ref{lem:non-emptyness-of-multiples}\ref{it:lem:non-emptyness-of-multiples(iv)} that, for each $i \in \mathbf N^+$, there exists $k_i \in \mathbf N$ such that
\begin{equation}
\label{equ:unions-for-large-ks}
\UUc_{k,i} \ne \emptyset\quad\text{and}\quad\lambda_{k,1} \le \cdots \le \lambda_{k,i} \le k \le \rho_{k,i} \le \cdots \le \rho_{k,1}, \text{ for }k \ge k_i.
\end{equation}
With these preliminaries in mind, we proceed to demonstrate that \ref{it:th:structure_theorem(i)} $\Rightarrow$ \ref{it:th:structure_theorem(ii)} $\Rightarrow$ \ref{it:th:structure_theorem(i)}, while noting that the ``In particular'' part of the statement is a trivial consequence of Lemma \ref{lem:non-emptyness-of-multiples}\ref{it:lem:non-emptyness-of-multiples(iiiq)}.

\ref{it:th:structure_theorem(i)} $\Rightarrow$ \ref{it:th:structure_theorem(ii)}: By hypothesis (and Definition \ref{def:structure-theorem}), there are $M \in \mathbf N$ and $d \in \mathbf N^+$ such that
\begin{equation}
\label{equ:containment-condition}
(k + d \cdot \mathbf Z) \cap \llb \lambda_{k} + M, \rho_{k} - M \rrb \subseteq \UUc_{k} \subseteq k + d \cdot \mathbf Z, \text{ for all large }k.
\end{equation}
It follows by Proposition \ref{prop:the-difference-is-necessarily-delta} that $d = \delta$, and hence by Lemma \ref{lem:delta-set-of-unions}\ref{it:lem:delta-set-of-unions(ii)} that
$$
1 \le \delta \le \inf \Delta(\UUc_k) \le \sup \Delta(\UUc_k) \le M + \delta, \text{ for all but finitely many }k.
$$
In particular, this shows that $D \in \mathbf N^+$. Accordingly, let $\ell \in \mathbf N^+$ such that $\ell + \delta \cdot \llb 0, D \rrb \subseteq \UUc_\ell$, and take $\mu := 1 + M + \ell$ (note that the existence of such an $\ell$ is guaranteed by Corollary \textup{\ref{cor:delta_sets}}\ref{it:cor:delta_sets(iii)} and the finiteness of the limit $D$). Then $\mu \in \mathbf N^+$, and we derive from \eqref{equ:unions-for-large-ks} that
$$
\lambda_k + M < \lambda_k + M + \ell
\le
k = (k - \ell) + \ell
\le \rho_{k-\ell}, \text{ for } k \ge k_\mu + \ell.
$$
Therefore, we find that
$$
(k + \delta \cdot \mathbf Z) \cap \llb \rho_{k-\ell} + \ell, \rho_k - M \rrb \subseteq (k + \delta \cdot \mathbf Z) \cap \llb \lambda_k + M, \rho_k - M \rrb \fixed[-0.85]{\text{ }}\stackrel{\eqref{equ:containment-condition}}{\subseteq}\fixed[-0.85]{\text{ }} \mathscr U_k,
$$
which proves the claim with $N := M$.

\ref{it:th:structure_theorem(ii)} $\Rightarrow$ \ref{it:th:structure_theorem(i)}: Let $\ell \in \mathbf N^+$ have the property that
$
\UUc^* := \ell + \delta \cdot \fixed[-0.1]{\text{ }} \llb 0, D \rrb \subseteq \UUc_{\ell}$ (recall Corollary \textup{\ref{cor:delta_sets}}\ref{it:cor:delta_sets(iii)}). Then we get from \eqref{equ:unions-for-large-ks} and Lemma \ref{prop:basic(1)}\ref{it:prop:basic(1)(iv)} that
\begin{equation}
\label{equ:inclusion(1)}
k + \delta \cdot \fixed[-0.1]{\text{ }} \llb 0, D \rrb
\subseteq \UUc^* + \UUc_{k - \ell} \subseteq \UUc_{\ell} + \UUc_{k-\ell} \subseteq \UUc_k, \text{ for }k \ge k_1 + \ell.
\end{equation}
On the other hand, it follows from our assumptions that there exist $k_0 \in \mathbf N^+$ and $N \in \mathbf N$ for which
\begin{equation}
\label{equ:inclusion(2)}
\sup \Delta(\UUc_k) \le D\quad\text{and}\quad
\mathscr P_k := (k + \delta \cdot \mathbf Z) \cap \llb \rho_{k- \ell} + \ell, \rho_k - N \rrb \subseteq \UUc_k, \text{ for }k \ge k_0.
\end{equation}
Fix $k \ge \ell + \max(k_0, k_1, k_{N+1})$ and set $\UUc_k^\ast := \UUc^* + \UUc_{k-\ell}$.
Then $\sup \Delta(\mathscr U_{k-\ell}) \le D$, and because $\mathscr U^\ast$ is an AP with difference $\delta$ and $|\mathscr U^\ast| = D+1$, it is clear that
$\UUc_k^\ast$ is also an AP with difference $\delta$, i.e.,
\begin{equation}
\label{equ:a-longer-AP}
\UUc_k^\ast = \llb \inf \UUc_k^\ast , \sup \UUc_k^\ast  \rrb \cap (\inf \UUc_k^\ast  + \delta \cdot \mathbf Z) \fixed[-0.85]{\text{ }}\stackrel{\eqref{equ:inclusion(1)}}{\subseteq}\fixed[-0.85]{\text{ }} \mathscr U_k.
\end{equation}
Moreover, we have that
\begin{equation}
\label{equ:disuguaglianze-varie}
k \in \UUc_k^\ast,
\quad
\inf \UUc_k^\ast = \ell + \lambda_{k-\ell},
\quad\text{and}\quad
\sup \UUc_k^* =
(\ell + \delta D) + \rho_{k-\ell} \ge \ell + \rho_{k-\ell}.
\end{equation}
But $\ell + \lambda_{k-\ell} \le k = \ell + (k - \ell) \le \ell + \rho_{k - \ell}$, and therefore it is straightforward that
$$
(k + \delta \cdot \mathbf Z) \cap \llb k, \rho_{k} - N \rrb \subseteq \bigl((k + \delta \cdot \mathbf Z) \cap \llb \ell + \lambda_{k-\ell}, \ell + \rho_{k - \ell} \rrb\bigr) \cup \bigl((k + \delta \cdot \mathbf Z) \cap \llb \ell + \rho_{k - \ell}, \rho_{k} - N\rrb\bigr).
$$
So, we infer from  \eqref{equ:inclusion(2)}-\eqref{equ:disuguaglianze-varie} that $(k + \delta \cdot \mathbf Z) \cap \llb k, \rho_{k} - N \rrb
\subseteq \UUc_k^* \cup \mathscr P_k \subseteq \UUc_k$, which implies, by Lemma \ref{lem:reduction_to_the_upper_part}, that $\LLc$ satisfies the Structure Theorem for Unions.
\end{proof}
\begin{remark}
	\label{remark:finiteness-of-delta-set-implies-finiteness-of-Udelta-set}
	Theorem \ref{th:characterization-of-structure-theorem} is a \textit{proper} generalization of \cite[Theorem 2.2(1)]{FGKT}. The latter applies, in fact, to the case when $\LLc$ is a directed subfamily of $\mathcal P(\mathbf N)$ for which $\Delta(\LLc)$ is finite (and non-empty). But we know from Lemma \ref{lem:delta-set-of-unions}\ref{it:lem:delta-set-of-unions(i)} that $\sup \Delta_\cup(\LLc) \le \sup \Delta(\LLc)$, and condition \ref{it:th:structure_theorem(ii)} in Theorem \ref{th:characterization-of-structure-theorem} is definitely weaker than the finiteness of the set of distances: E.g., if $L := \{2^k: k \in \mathbf N\}$, then $\{\mathbf N_{\ge 2}, L\} \subseteq \mathcal P(\mathbf N)$ is a directed family with $\sup \Delta(\UUc_k) = 1$ for $k \ge 2$, but $\sup \Delta(L) = \infty$ (a much more interesting example in the same vein will be discussed at the end of \S{ }\ref{sec:focus-on-sets-of-lengths}).
\end{remark}
Now we look for sufficient conditions under which Theorem \ref{th:characterization-of-structure-theorem} can be used to show that a subadditive subfamily of $\mathcal P(\mathbf N)$ satisfies the Structure Theorem for Unions. We start with a couple of lemmas.
\begin{lemma}
	\label{lem:growth-rate-of-local-elasticities-in-weakly-directed-families}
	Let $\LLc \subseteq \mathcal P(\mathbf N)$ be a subadditive, primitive family. Then are equivalent:
	\begin{enumerate}[label={\rm (\alph{*})}]
		\item\label{it:lem:growth-rate-of-local-elasticities-in-weakly-directed-families(a)} There is $K \in \mathbf N$ such that $\rho_{k+1} \le \rho_{k} + K$ \textup{(}respectively, $\lambda_{k} - K \le \lambda_{k+1}$\textup{)} for all large $k$.
		\item\label{it:lem:growth-rate-of-local-elasticities-in-weakly-directed-families(b)} There are $q, N \in \mathbf N^+$ such that $\rho_{k+q} \le \rho_{k} + N$ \textup{(}respectively, $\lambda_{k} - N \le \lambda_{k+q}$\textup{)} for all large $k$.
	\end{enumerate}
\end{lemma}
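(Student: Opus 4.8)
The plan is to handle the two directions separately. The implication \ref{it:lem:growth-rate-of-local-elasticities-in-weakly-directed-families(a)} $\Rightarrow$ \ref{it:lem:growth-rate-of-local-elasticities-in-weakly-directed-families(b)} is immediate: take $q := 1$ and $N := \max(1,K)$ (and likewise for the $\lambda$-variant). For the converse the guiding observation is that, under \ref{it:lem:growth-rate-of-local-elasticities-in-weakly-directed-families(b)}, over any block of $q$ consecutive indices the sequence $(\rho_k)$ grows by at most $N$; hence, if I can \emph{also} pin down a constant $c$ with $\rho_{k+1}-\rho_k \ge -c$ for all large $k$, then a single step can grow by at most $N+(q-1)c$, which is precisely \ref{it:lem:growth-rate-of-local-elasticities-in-weakly-directed-families(a)}. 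The $\lambda$-case will follow the same scheme with every inequality reversed, replacing the super-additivity $\rho_{h+k}\ge\rho_h+\rho_k$ by the sub-additivity $\lambda_{h+k}\le\lambda_h+\lambda_k$ --- both furnished by Lemma \ref{prop:basic(1)}\ref{it:prop:basic(1)(v)} (with $i=j=1$, when $\UUc_h,\UUc_k\neq\emptyset$).

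For the $\rho$-case of \ref{it:lem:growth-rate-of-local-elasticities-in-weakly-directed-families(b)} $\Rightarrow$ \ref{it:lem:growth-rate-of-local-elasticities-in-weakly-directed-families(a)}: since $\LLc$ is primitive, Lemma \ref{lem:non-emptyness-of-multiples}\ref{it:lem:non-emptyness-of-multiples(ii)} supplies $k_0$ with $\UUc_k\neq\emptyset$ for $k\ge k_0$; I would fix once and for all an index $j_0\ge\max(k_0,q+1)$ with $j_0\equiv 1\pmod q$, say $j_0=1+pq$ with $p\in\mathbf N^+$, and note $\rho_{j_0}\ge j_0$ (as $j_0\in\UUc_{j_0}$). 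By Lemma \ref{lem:non-emptyness-of-multiples}\ref{it:lem:non-emptyness-of-multiples(iiiq)} and primitivity, either $\rho_k=\infty$ for all large $k$ --- in which case \ref{it:lem:growth-rate-of-local-elasticities-in-weakly-directed-families(a)} holds trivially with any $K$ --- or $\rho_k<\infty$ for every $k$, which I now assume. Iterating the hypothesis $\rho_{a+q}\le\rho_a+N$ $p$ times yields $\rho_{a+pq}\le\rho_a+pN$ for all large $a$. Then, for $k$ large, writing $k+1=(k+1-j_0)+j_0$ and using super-additivity together with the iterated hypothesis at $a=k-pq=k+1-j_0$,
$$
\rho_{k+1}\ \ge\ \rho_{k+1-j_0}+\rho_{j_0}\ \ge\ \rho_{k-pq}+j_0\ \ge\ (\rho_k-pN)+j_0\ =\ \rho_k+1-p(N-q),
$$
so $\rho_{k+1}-\rho_k\ge -c$ for all large $k$, with $c:=\max\bigl(0,\,p(N-q)-1\bigr)\in\mathbf N$.

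Finally I would assemble the bound: for all large $k$, the telescoping identity $\rho_{k+q}-\rho_k=\sum_{i=1}^{q}(\rho_{k+i}-\rho_{k+i-1})$, combined with $\rho_{k+q}-\rho_k\le N$ and $\rho_{k+i}-\rho_{k+i-1}\ge -c$ for $i\ge 2$, forces $\rho_{k+1}-\rho_k\le N+(q-1)c=:K$, which is \ref{it:lem:growth-rate-of-local-elasticities-in-weakly-directed-families(a)}. For the $\lambda$-case I would run the mirror argument: sub-additivity gives $\lambda_{k+1}\le\lambda_{k+1-j_0}+\lambda_{j_0}\le\lambda_{k-pq}+j_0$, and the iterate $\lambda_{a+pq}\ge\lambda_a-pN$ of the hypothesis bounds $\lambda_{k+1}-\lambda_k$ from \emph{above} by a constant $c'$; feeding this into $\lambda_k-\lambda_{k+q}\le N$ then yields $\lambda_k-\lambda_{k+1}\le N+(q-1)c'$, i.e.\ $\lambda_k-K'\le\lambda_{k+1}$ with $K':=N+(q-1)c'$. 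All the $\rho_k,\lambda_k$ involved are finite for large $k$ (because $\UUc_k\neq\emptyset$), so the differences above make sense. The step I expect to be the main obstacle is exactly the lower (resp.\ upper) bound on the single-step increments; the device that makes it go through is the congruence choice $j_0\equiv 1\pmod q$, which turns $j_0-1$ into a multiple of $q$ so that the iterated hypothesis applies, and the remaining work is purely bookkeeping to guarantee that every index at which a hypothesis or a (sub/super-)additivity inequality is invoked is large enough.
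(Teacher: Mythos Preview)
Your argument is correct. Both directions go through as you describe, and the bookkeeping on indices (in particular the congruence choice $j_0\equiv 1\pmod q$ so that $k+1-j_0=k-pq$) is sound.

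That said, your route to \ref{it:lem:growth-rate-of-local-elasticities-in-weakly-directed-families(b)} $\Rightarrow$ \ref{it:lem:growth-rate-of-local-elasticities-in-weakly-directed-families(a)} is more circuitous than the paper's. The paper uses the same three ingredients---non-emptiness of $\UUc_k$ for large $k$ (Lemma \ref{lem:non-emptyness-of-multiples}\ref{it:lem:non-emptyness-of-multiples(ii)}), iteration of the hypothesis, and the super/sub-additivity of Lemma \ref{prop:basic(1)}\ref{it:prop:basic(1)(v)}---but combines them in a single chain by shifting \emph{forward} rather than backward. Concretely, with $k_1\in\mathbf N^+$ such that $\UUc_k\ne\emptyset$ for $k\ge k_1$, one sets $K:=2k_1N+\lambda_{2qk_1-1}$ and observes, for $k$ large,
\[
\rho_{k+1}\ \le\ \rho_{k+1}+\rho_{2qk_1-1}\ \le\ \rho_{k+2qk_1}\ \le\ \rho_k+2k_1N\ \le\ \rho_k+K,
\]
the second step being superadditivity and the third being the $2k_1$-fold iterate of the hypothesis (note $(k+1)+(2qk_1-1)=k+q\cdot 2k_1$). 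The $\lambda$-chain is the mirror image. Because the shift $2qk_1-1$ is chosen congruent to $-1\bmod q$, the detour lands exactly on an index comparable to $k$ via the iterated hypothesis, and the upper bound falls out directly---no auxiliary lower bound on single-step increments, and no telescoping, is needed. Your approach, by contrast, shifts backward to $k-pq$, which naturally produces a \emph{lower} bound $\rho_{k+1}-\rho_k\ge -c$; you then have to feed this into the block bound $\rho_{k+q}-\rho_k\le N$ via telescoping to extract the desired upper bound. This works, but the paper's forward shift gets there in one stroke. (A small bonus of your write-up is the explicit dichotomy on $\rho_k=\infty$, which the paper leaves implicit.)
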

\begin{proof}
	 \ref{it:lem:growth-rate-of-local-elasticities-in-weakly-directed-families(a)} $\Rightarrow$ \ref{it:lem:growth-rate-of-local-elasticities-in-weakly-directed-families(b)} is obvious. As for the other direction, assume there exist $k_0 \in \mathbf N$ and $q, N \in \mathbf N^+$ such that
	$
	\rho_{k+q} \le \rho_{k} + N$ (respectively, $
	\lambda_{k} - N \le \lambda_{k+q}$) for $k \ge k_0$.
	Then, it is found (by induction) that
	\begin{equation}
	\label{equ:long-range-inequality}
	\rho_{k + qh} \le \rho_{k} + hN
	\quad
	\text{(respectively, }\lambda_{k} - hN \le \lambda_{k+qh}\text{)},
	\text{ for all }h \in \mathbf N \text{ and } k \ge k_0.
	\end{equation}
	Moreover, we know from Lemma \ref{lem:non-emptyness-of-multiples}\ref{it:lem:non-emptyness-of-multiples(ii)} that there is $k_1 \in \mathbf N^+$ such that $\UUc_{k} \ne \emptyset$ for $k \ge k_1$.
	Accordingly, set
	$K := 2k_1 N + \lambda_{2qk_1 - 1} \in \mathbf N$. Then we get from Lemma \ref{prop:basic(1)}\ref{it:prop:basic(1)(v)} that, for $k \ge \max(k_0, k_1)$,
	\begin{gather*}
	\rho_{k+1} \le \rho_{k+1} + \rho_{2qk_1 - 1} \le \rho_{k + 2qk_1}
	\fixed[-0.85]{\text{ }}\stackrel{\eqref{equ:long-range-inequality}}{\le}\fixed[-0.85]{\text{ }}
	\rho_{k} + 2 k_1 N \le \rho_{k} + K \\
	\text{(respectively, }\lambda_{k} - K = (\lambda_{k} - 2 k_1 N) - \lambda_{2qk_1 - 1}
	\fixed[-0.85]{\text{ }}\stackrel{\eqref{equ:long-range-inequality}}{\le}\fixed[-0.85]{\text{ }}
	\lambda_{k + 2qk_1} - \lambda_{2qk_1 - 1} \le \lambda_{k+1}.\text{)}
	\qedhere
	\end{gather*}
\end{proof}
\begin{lemma}
	\label{lem:basic-properties-of-Delta}
	Let $L, L^\prime \subseteq \mathbf N$. The following hold:
	\begin{enumerate}[label={\rm (\roman{*})}]
		\item\label{it:lem:basic-properties-of-Delta(i)} If $\inf L = \inf L^\prime$, $\sup L = \sup L^\prime$, and $L \subseteq L^\prime$, then $\sup \Delta(L^\prime) \le \sup \Delta(L)$.
		\item\label{it:lem:basic-properties-of-Delta(ii)} $\sup \Delta(L + L^\prime) \le \max(\sup \Delta(L), \sup \Delta(L^\prime))$.
	\end{enumerate}
\end{lemma}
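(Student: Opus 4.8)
The plan for \ref{it:lem:basic-properties-of-Delta(i)} is to prove directly that $d \le \sup \Delta(L)$ for every $d \in \Delta(L')$; this is enough, and if $\Delta(L')$ is empty there is nothing to do, since $\sup \emptyset = 0$. So I would fix $d \in \Delta(L')$ together with a witness $\ell \in L'$ satisfying $L' \cap \llb \ell, \ell + d \rrb = \{\ell, \ell + d\}$, noting that $L'$, hence $L$, is then non-empty. Because $\inf L = \inf L' \le \ell$, the set $\{x \in L : x \le \ell\}$ is non-empty and bounded above, so it has a largest element $a$; and because $\sup L = \sup L' \ge \ell + d$, the set $\{x \in L : x \ge \ell + d\}$ is a non-empty subset of $\mathbf N$ and hence has a least element $c$. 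Then $a \le \ell < \ell + d \le c$, so $c - a \ge d \ge 1$, and the crux is to check that $L \cap \llb a, c \rrb = \{a, c\}$: if $x \in L$ with $a < x < c$, then maximality of $a$ forces $x > \ell$ and minimality of $c$ forces $x < \ell + d$, so $x$ would be an element of $L \subseteq L'$ lying in $L' \cap \llb \ell, \ell + d \rrb \setminus \{\ell, \ell + d\} = \emptyset$, a contradiction. Therefore $c - a \in \Delta(L)$, whence $\sup \Delta(L) \ge c - a \ge d$.

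For \ref{it:lem:basic-properties-of-Delta(ii)} I would assume $L + L' \ne \emptyset$ (otherwise both sides equal $0$), fix $d \in \Delta(L + L')$ with a witness $s \in L + L'$ such that $(L + L') \cap \llb s, s + d \rrb = \{s, s + d\}$, and write $s + d = a + b$ with $a \in L$ and $b \in L'$. If $a$ is not the least element of $L$, let $a'$ be the largest element of $L$ with $a' < a$, so that $a - a' \in \Delta(L)$; since $a' + b \in L + L'$ and $a' + b < a + b = s + d$, while $L + L'$ has no element strictly between $s$ and $s + d$, I conclude $a' + b \le s$, that is $a - a' \ge d$, hence $d \le \sup \Delta(L)$. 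If instead $a$ is the least element of $L$, I would first observe that $b$ cannot be the least element of $L'$ — otherwise $s + d = \inf(L + L')$, contradicting $s \in L + L'$ with $s < s + d$ — and then run the symmetric argument with the largest element $b'$ of $L'$ below $b$ to get $d \le b - b' \le \sup \Delta(L')$. Either way $d \le \max(\sup \Delta(L), \sup \Delta(L'))$, and since $d \in \Delta(L + L')$ was arbitrary, the conclusion follows.

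I do not expect a genuine obstacle here. The only points demanding care are the well-definedness of the various maxima and minima used above — which is exactly where the two boundary hypotheses in \ref{it:lem:basic-properties-of-Delta(i)} are needed, one to produce $a$ and one to produce $c$ — and, in \ref{it:lem:basic-properties-of-Delta(ii)}, the separate treatment of the degenerate case where $a$ is minimal in $L$, which is dispatched by exploiting the symmetry of the statement in $L$ and $L'$.
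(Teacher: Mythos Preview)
Your proof is correct and follows essentially the same approach as the paper. For part (i) the arguments are identical up to notation; for part (ii) the paper decomposes the \emph{lower} endpoint of the gap (translating so that it becomes $0 = 0+0$) and then uses the smallest positive element of $L$ or $L'$, whereas you decompose the \emph{upper} endpoint $s+d = a+b$ and use the predecessor of $a$ in $L$ or of $b$ in $L'$ --- these are mirror-image versions of the same idea.
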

\begin{proof}
	\ref{it:lem:basic-properties-of-Delta(i)} If $\Delta(L^\prime) = \emptyset$, then $\sup \Delta(L^\prime) = 0$ and there is nothing left to prove. Otherwise, let $d \in \Delta(L^\prime)$: It suffices to prove $d \le \sup \Delta(L)$. For, pick $\ell \in \mathbf N$ such that $L^\prime \cap \llb \ell, \ell + d \fixed[0.2]{\text{ }} \rrb = \{\ell, \ell + d\}$. Accordingly, let $x := \sup \bigl(L \cap \llb 0, \ell \rrb\bigr)$ and $y := \inf \bigl( L \cap \llb \ell+1, \infty \rrb\bigr)$. It is clear that $x, y \in L$, since our assumptions imply that $
	\inf L = \inf L^\prime \le \ell < \ell + d \le \sup L^\prime = \sup L$.
	It follows $d \le y - x \in \Delta(L)$, because $L \subseteq L^\prime$ and there exists no element in $L^\prime$ that is strictly in between $\ell$ and $\ell + d$. Thus, we obtain $d \le \sup \Delta(L)$.
	
	\ref{it:lem:basic-properties-of-Delta(ii)} If $\Delta(L+L^\prime)$ is empty, the conclusion is trivial. Otherwise, pick $d \in \Delta(L + L^\prime)$, and let $x, y \in L$ and $x^\prime, y^\prime\in L^\prime$ such that $(L + L^\prime) \cap \llb x+x^\prime,y + y^\prime \rrb = \{x+x^\prime,y + y^\prime\}$ and $d = (y + y^\prime) - (x+x^\prime) \ge 1$.
	
	Now, using that $\Delta(X+k) = \Delta(X)$ for all $X \subseteq \mathbf Z$ and $k \in \mathbf Z$, we can assume without loss of generality that $x = x^\prime = 0$. It follows (up to symmetry) that $y \ge 1$. Accordingly, set $z := \inf L^+$.
	
	We derive from the above that $z \in \Delta(L) \cap L^+ \cap (L + L^\prime)$, and since $(L + L^\prime) \cap \llb 0,y + y^\prime \rrb = \{0,d\}$, we conclude that $d \le z \le \sup \Delta(L)$. This finishes the proof, because $d \in \Delta(L+L^\prime)$ was arbitrary.
\end{proof}
With this in hand, we first prove a generalization (from directed to subadditive families) of a remark made in the comments after the statement of \cite[Theorem 2.2(1)]{FGKT}, and then a result showing how ``natural restrictions'' on the growth rate of the upper and lower local elasticities are enough by themselves to imply the Structure Theorem for Unions.
\begin{corollary}
	\label{th:finite-delta-set-bound-on-growth-rate-of-rho_k}
	Let $\mathscr L \subseteq \mathcal P(\mathbf N)$ be a subadditive, primitive family for which $\Delta(\mathscr L)$ is finite and there is $K \in \mathbf N$ such that $\rho_{k+1} \le \rho_k + K$ for all large $k$. Then $\mathscr L$ satisfies the Structure Theorem for Unions.
\end{corollary}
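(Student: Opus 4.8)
The plan is to reduce the claim to Theorem~\ref{th:characterization-of-structure-theorem} by verifying its condition~\ref{it:th:structure_theorem(ii)}. First I would dispose of the degenerate case $\DeltaF{\LLc} = \emptyset$: there $\UUc_k \subseteq \{k\}$ for all $k$ by Lemma~\ref{prop:basic(1)}\ref{it:prop:basic(1)(ii)}, and the Structure Theorem for Unions holds vacuously. So assume $\DeltaF{\LLc} \ne \emptyset$, whence $\delta \in \mathbf N^+$ and, by hypothesis, $\sup \DeltaF{\LLc} < \infty$. Put $D := \limsup_k \sup \Delta(\UUc_k)$. Lemma~\ref{lem:delta-set-of-unions}\ref{it:lem:delta-set-of-unions(i)} gives $D \le \sup \Delta_\cup(\LLc) \le \sup \DeltaF{\LLc} < \infty$, while Lemma~\ref{lem:delta-set-of-unions}\ref{it:lem:delta-set-of-unions(ii)}, applied with arbitrarily large $k_0$, yields infinitely many $k$ with $\inf \Delta(\UUc_k) = \delta$, so $\sup \Delta(\UUc_k) \ge \delta \ge 1$ for such $k$, and hence $D \in \mathbf N^+$. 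If $\rho_k = \infty$ for some $k$ we are already done by the ``In particular'' part of Theorem~\ref{th:characterization-of-structure-theorem}; so I may assume $\rho_k < \infty$ for every $k$ (which is anyway forced here by Lemma~\ref{lem:non-emptyness-of-multiples}\ref{it:lem:non-emptyness-of-multiples(iiiq)} and primitivity, since a single $\rho_\kappa = \infty$ would make $\rho_k = \infty$ for all large $k$).

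Next I would fix, via Corollary~\ref{cor:delta_sets}\ref{it:cor:delta_sets(iii)}, an index $\ell \in \mathbf N^+$ with $P := \ell + \delta \cdot \llb 0, D \rrb \subseteq \UUc_\ell$; this already secures the first half of condition~\ref{it:th:structure_theorem(ii)}, and $P$ is an arithmetic progression with difference $\delta$ and $D+1$ terms. It remains to produce $N \in \mathbf N$ such that $(k + \delta \cdot \mathbf Z) \cap \llb \rho_{k-\ell} + \ell, \rho_k - N \rrb \subseteq \UUc_k$ for all large $k$. The idea is to ``thicken'' $\UUc_{k-\ell}$ into a full progression by adding $P$ to it.

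To carry this out, note that for all large $j$ one has $\UUc_j \ne \emptyset$ (Lemma~\ref{lem:non-emptyness-of-multiples}\ref{it:lem:non-emptyness-of-multiples(ii)}), $\UUc_j \subseteq j + \delta \cdot \mathbf Z$ (Lemma~\ref{lem:delta-set-of-unions}\ref{it:lem:delta-set-of-unions(iii)}), and $\sup \Delta(\UUc_j) \le D$ (by the definition of $D$ and $D < \infty$). Writing $\UUc_j = \{a_1 < \cdots < a_s\}$, the successive differences $a_{i+1} - a_i$ are then positive multiples of $\delta$ that are $\le D < (D+1)\delta$; since each translate $a_i + P$ equals the stretch $\llb a_i + \ell, a_i + \ell + D\delta \rrb \cap (a_i + \ell + \delta \cdot \mathbf Z)$ of length $D\delta$, consecutive translates overlap, and an easy induction shows $\UUc_j + P = \llb \lambda_j + \ell, \rho_j + \ell + D\delta \rrb \cap (j + \ell + \delta \cdot \mathbf Z)$, a single AP with difference $\delta$. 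Taking $j := k - \ell$ and using $\UUc_{k-\ell} + \UUc_\ell \subseteq \UUc_k$ (Lemma~\ref{prop:basic(1)}\ref{it:prop:basic(1)(iv)}) together with $P \subseteq \UUc_\ell$, we obtain $\llb \lambda_{k-\ell} + \ell, \rho_{k-\ell} + \ell + D\delta \rrb \cap (k + \delta \cdot \mathbf Z) \subseteq \UUc_k$ for all large $k$. Finally, iterating the hypothesis $\rho_{k+1} \le \rho_k + K$ gives $\rho_k \le \rho_{k-\ell} + \ell K$ for large $k$, so with $N := \max(0, \ell K - \ell - D\delta) \in \mathbf N$ the interval $\llb \rho_{k-\ell} + \ell, \rho_k - N \rrb$ sits inside $\llb \lambda_{k-\ell} + \ell, \rho_{k-\ell} + \ell + D\delta \rrb$; intersecting with $k + \delta \cdot \mathbf Z$ gives the required inclusion, so condition~\ref{it:th:structure_theorem(ii)} holds and Theorem~\ref{th:characterization-of-structure-theorem} applies.

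The step I expect to be the delicate one is the ``thickening'': verifying that adding the fixed progression $P$ of length $D+1$ genuinely fills every hole of $\UUc_{k-\ell}$. This is precisely where the finiteness of $\DeltaF{\LLc}$ enters (through the uniform bound $\sup \Delta(\UUc_j) \le D$) and where the elementary inequality $D < (D+1)\delta$ and the correct residue class modulo $\delta$ must be tracked. By contrast, the growth hypothesis $\rho_{k+1} \le \rho_k + K$ plays only a soft role: it merely keeps $\rho_k - (\rho_{k-\ell} + \ell)$ bounded by a constant, so that the \emph{bounded}-length progression $P$ is long enough to cover the entire top segment of $\UUc_k$ up to $\rho_k - N$.
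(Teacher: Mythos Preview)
Your argument is correct, but it works harder than necessary and essentially duplicates part of the proof of Theorem~\ref{th:characterization-of-structure-theorem}. The paper's proof proceeds exactly as yours does through the choice of $\ell$ with $\ell + \delta \cdot \llb 0, D \rrb \subseteq \UUc_\ell$, but then simply takes $N := \ell K$ and observes that the interval $\llb \rho_{k-\ell} + \ell,\, \rho_k - N \rrb$ is \emph{empty} for all large $k$, since $\rho_k - \rho_{k-\ell} \le \ell K$ forces $\rho_k - N \le \rho_{k-\ell} < \rho_{k-\ell} + \ell$. The required inclusion in condition~\ref{it:th:structure_theorem(ii)} then holds vacuously, and Theorem~\ref{th:characterization-of-structure-theorem} applies immediately. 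Your ``thickening'' of $\UUc_{k-\ell}$ by the progression $P$ is exactly the mechanism already built into the implication \ref{it:th:structure_theorem(ii)}~$\Rightarrow$~\ref{it:th:structure_theorem(i)} of Theorem~\ref{th:characterization-of-structure-theorem} (see the set $\UUc_k^\ast$ there), so you are re-proving that step rather than invoking it. What you gain is a slightly smaller constant $N = \max(0, \ell K - \ell - D\delta)$, but since the statement only asks for \emph{some} $N$, this buys nothing. In short: your identification of the ``delicate step'' is misplaced---the growth bound on $\rho_k$ does all the work, and the thickening is already absorbed in the theorem you are citing.
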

\begin{proof}
	Let $D := \limsup_k \sup \Delta(\mathscr U_k)$. If $\Delta(\LLc)$ is empty, then $\mathscr U_k \subseteq \{k\}$ for all $k$ and the conclusion is trivial. Therefore, we suppose from here on that $\Delta(\LLc) \ne \emptyset$.
	
	We have from Lemma \ref{lem:non-emptyness-of-multiples}\ref{it:lem:non-emptyness-of-multiples(iv)} that there exists $k_0 \in \mathbf N$ such that $\mathscr U_k \ne \emptyset$ for $k \ge k_0$; and from points \ref{it:lem:delta-set-of-unions(i)} and \ref{it:lem:delta-set-of-unions(ii)} of Lemma \ref{lem:delta-set-of-unions} that
	$1 \le \delta \le D \le \sup \Delta_\cup(\LLc) \le \sup \Delta(\LLc) < \infty$. So, $D$ is a positive integer, and we get from Corollary \textup{\ref{cor:delta_sets}}\ref{it:cor:delta_sets(iii)} that $\ell + \delta \cdot \llb 0, D \rrb \subseteq \mathscr U_\ell$ for some $\ell \in \mathbf N^+$.
	
	By Theorem \ref{th:characterization-of-structure-theorem}, it is hence enough to show that there exists $N \in \mathbf N$ such that the interval $\llb \rho_{k-\ell} + \ell, \rho_k - N \rrb$ is empty for all but finitely many $k$. But this is now straightforward: If $\rho_k < \infty$ for all $k \ge k_0$, we take $N := \ell K$ and note that, by the hypothesis and the above,
	$$
	\rho_k - \rho_{k-\ell} = \sum_{i=k-\ell}^{k-1} (\rho_{i+1} - \rho_i) \le \ell K, \ \text{for }k \ge k_0 + \ell;
	$$
	otherwise, it follows by Lemma \ref{lem:non-emptyness-of-multiples}\ref{it:lem:non-emptyness-of-multiples(iiiq)} that $\rho_k = \infty$ for all large $k$, and hence we can take $N := 0$.
\end{proof}
\begin{theorem}
\label{th:structure-theorem-for-unions}
Let $\mathscr L \subseteq \mathcal P(\mathbf N)$ be a subadditive, primitive family for which there is $K \in \mathbf N$ such that $\rho_{k+1} \le \rho_k + K < \infty$ and $\lambda_k - K \le \lambda_{k+1}$ for all but finitely many $k$. Then the following hold:
\begin{enumerate}[label={\rm (\roman{*})}]
\item\label{it:th:structure-theorem-for-unions(i)} $\sup \Delta_\cup(\LLc) < \infty$.
\item\label{it:th:structure-theorem-for-unions(ii)} $\LLc$ satisfies the Structure Theorem for Unions.
\end{enumerate}
\end{theorem}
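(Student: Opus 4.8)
The plan is to reduce both assertions to the single statement that $D := \limsup_k \sup\Delta(\UUc_k)$ is finite, and then to feed this into Theorem \ref{th:characterization-of-structure-theorem}. If $\DeltaF{\LLc} = \emptyset$, then $\UUc_k \subseteq \{k\}$ for every $k$ by Lemma \ref{prop:basic(1)}\ref{it:prop:basic(1)(ii)}, so $\Delta_\cup(\LLc) = \emptyset$ and $\LLc$ satisfies the Structure Theorem for Unions with $d = M = 1$; hence one may assume $\DeltaF{\LLc} \ne \emptyset$, so that $\delta \ge 1$. Next, since the hypothesis forces $\rho_{k+1} \le \rho_k + K < \infty$ for all large $k$, Lemma \ref{lem:non-emptyness-of-multiples}\ref{it:lem:non-emptyness-of-multiples(iiiq)} (with $\wp(\LLc) = 1$) rules out $\rho_\kappa = \infty$ for any $\kappa$; thus every $\UUc_k$ is finite and $\sup\Delta(\UUc_k) < \infty$ for each individual $k$. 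As $\sup\Delta_\cup(\LLc) = \sup_k \sup\Delta(\UUc_k)$ and only finitely many indices lie below any fixed threshold, item \ref{it:th:structure-theorem-for-unions(i)} will follow once $\sup\Delta(\UUc_k)$ is bounded uniformly over all large $k$, i.e., once $D < \infty$.

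For the recursive gap bound, use Lemma \ref{lem:non-emptyness-of-multiples}\ref{it:lem:non-emptyness-of-multiples(ii)} to fix $\ell_0 \in \mathbf N^+$ large enough that $\UUc_k \ne \emptyset$ and both inequalities $\rho_{k+1} \le \rho_k + K$ and $\lambda_{k+1} \ge \lambda_k - K$ hold for all $k \ge \ell_0$, and set $C^\ast := \ell_0 K + \lambda_{\ell_0} \in \mathbf N$. For $k \ge 2\ell_0$, put $A := \lambda_{k-\ell_0} + \lambda_{\ell_0}$ and $B := \rho_{k-\ell_0} + \rho_{\ell_0}$. By Lemma \ref{prop:basic(1)}\ref{it:prop:basic(1)(iv)}--\ref{it:prop:basic(1)(v)} the sumset $\UUc_{k-\ell_0} + \UUc_{\ell_0}$ lies in $\UUc_k$, has infimum $A$ and supremum $B$, and $\lambda_k \le A \le B \le \rho_k$; in particular $A, B \in \UUc_k$. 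Telescoping the two inequalities over $\ell_0$ consecutive indices gives $A - \lambda_k \le C^\ast$ and $\rho_k - B \le C^\ast$. Now split $\UUc_k$ into the three pieces $\UUc_k \cap \llb \lambda_k, A\rrb$, $\UUc_k \cap \llb A, B\rrb$, $\UUc_k \cap \llb B, \rho_k\rrb$: since $A$ and $B$ both belong to $\UUc_k$, no gap of $\UUc_k$ can straddle $A$ or $B$, so every gap lies inside one piece; the first and third pieces have length at most $C^\ast$, and for the middle piece, applying Lemma \ref{lem:basic-properties-of-Delta}\ref{it:lem:basic-properties-of-Delta(i)} to $\UUc_{k-\ell_0} + \UUc_{\ell_0} \subseteq \UUc_k \cap \llb A,B\rrb$ (which share the infimum $A$ and supremum $B$) and then Lemma \ref{lem:basic-properties-of-Delta}\ref{it:lem:basic-properties-of-Delta(ii)} yields $\sup\Delta(\UUc_k \cap \llb A,B\rrb) \le \max(\sup\Delta(\UUc_{k-\ell_0}), \sup\Delta(\UUc_{\ell_0}))$. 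Altogether, $\sup\Delta(\UUc_k) \le \max\bigl(C^\ast,\, \sup\Delta(\UUc_{\ell_0}),\, \sup\Delta(\UUc_{k-\ell_0})\bigr)$ for all $k \ge 2\ell_0$.

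Iterating this inequality along $k, k-\ell_0, k-2\ell_0, \dots$ until the index drops into $\llb \ell_0, 2\ell_0 - 1\rrb$, and using that $\UUc_{\ell_0}$ and the finitely many $\UUc_j$ with $\ell_0 \le j < 2\ell_0$ are finite sets, gives a single constant $B_0$ with $\sup\Delta(\UUc_k) \le B_0$ for all $k \ge \ell_0$; hence $D \le B_0 < \infty$, which (together with the finitely many remaining indices) proves item \ref{it:th:structure-theorem-for-unions(i)}. For item \ref{it:th:structure-theorem-for-unions(ii)}: Corollary \ref{cor:delta_sets}\ref{it:cor:delta_sets(iii)} produces, for every $q$, arbitrarily large $\ell$ with $\ell + \delta \cdot \llb 0, q\rrb \subseteq \UUc_\ell$, so $\sup\Delta(\UUc_\ell) \ge \delta$ for infinitely many $\ell$, whence $\delta \le D < \infty$ and $D \in \mathbf N^+$; a further application of Corollary \ref{cor:delta_sets}\ref{it:cor:delta_sets(iii)} with $q = D$ gives $\ell \in \mathbf N^+$ with $\ell + \delta \cdot \llb 0, D\rrb \subseteq \UUc_\ell$. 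Taking $N := \ell K$ and telescoping $\rho_{k+1} \le \rho_k + K$ gives $\rho_k - \rho_{k-\ell} \le \ell K = N$ for all large $k$, so the interval $\llb \rho_{k-\ell} + \ell, \rho_k - N\rrb$ is empty; thus condition \ref{it:th:structure_theorem(ii)} of Theorem \ref{th:characterization-of-structure-theorem} holds, and $\LLc$ satisfies the Structure Theorem for Unions.

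The main obstacle is the recursive gap bound itself. The naive idea of filling a gap of $\UUc_k$ by convolving it with a long arithmetic progression drawn from some $\UUc_\ell$ is circular, because controlling the gaps of the resulting sumset already requires controlling the gaps of $\UUc_{k-\ell}$ — the very quantity one is trying to bound. What breaks the circularity is that one need not convolve with a long progression at all: convolving with all of $\UUc_{\ell_0}$ for a single \emph{fixed} $\ell_0$ suffices, provided one asks only for a recursive bound (not an absolute one) and the mismatch between $\UUc_k$ and $\UUc_{k-\ell_0} + \UUc_{\ell_0}$ at each of the two ends is bounded by a constant — which is precisely what the two-sided growth hypothesis delivers, the $\rho$-bound controlling the upper end and the $\lambda$-bound the lower end. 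Carrying out the splitting carefully, and in particular verifying that $A$ and $B$ lie in $\UUc_k$ so that no gap can hide across them, is the technical heart of the argument.
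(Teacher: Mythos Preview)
Your proof is correct and follows essentially the same route as the paper's: bound $\sup\Delta(\UUc_k)$ recursively by embedding a sumset in $\UUc_k$, controlling the two edge gaps via the growth hypotheses on $\rho_k$ and $\lambda_k$, and controlling the middle via Lemma \ref{lem:basic-properties-of-Delta}; then feed the resulting finiteness of $D$ into Theorem \ref{th:characterization-of-structure-theorem} with $N$ chosen so that the interval $\llb \rho_{k-\ell}+\ell,\rho_k-N\rrb$ is empty. The only cosmetic difference is that the paper convolves $\UUc_{\kappa-\ell}$ with an explicit arithmetic progression $\ell+\delta\cdot\llb 0,K\rrb\subseteq \UUc_\ell$ (extracted via Corollary \ref{cor:delta_sets}\ref{it:cor:delta_sets(iii)}), whereas you convolve with all of $\UUc_{\ell_0}$; this buys you nothing and costs you nothing beyond an extra harmless term $\sup\Delta(\UUc_{\ell_0})$ in the recursion.
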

\begin{proof}
Both claims are trivial if $\Delta(\LLc)$ is empty, since this implies by Lemma \ref{prop:basic(1)}\ref{it:prop:basic(1)(ii)} that $\UUc_k \subseteq \{k\}$ for all $k$. So, we assume from now on that $\Delta(\LLc)$ is non-empty. Then $\delta \in \mathbf N^+$, and we obtain from Lemma \ref{lem:non-emptyness-of-multiples}\ref{it:lem:non-emptyness-of-multiples(ii)} that there exists $k^{\fixed[0.2]{\text{ }}\prime} \in \mathbf N$ such that $\UUc_k \ne \emptyset$ for $k \ge k^{\fixed[0.2]{\text{ }}\prime}$. Accordingly, we proceed as follows:

\ref{it:th:structure-theorem-for-unions(i)}
By hypothesis, there is $k^{\fixed[0.2]{\text{ }}\prime\prime} \in \mathbf N$ with the property that
\begin{equation}
\label{equ:bounded-apart}
\rho_{k+1} \le \rho_k + K < \infty
\quad\text{and}\quad
\lambda_k - K \le \lambda_{k+1}, \text{ for }k \ge k^{\fixed[0.2]{\text{ }}\prime\prime}.
\end{equation}
On the other hand, we know from Corollary \ref{cor:delta_sets}\ref{it:cor:delta_sets(iii)} that there exists $\ell \in \mathbf N^+$ with
\begin{equation}
\label{equ:keep-strong}
\ell + \delta \cdot \fixed[-0.1]{\text{ }} \llb 0, K \rrb \subseteq \UUc_\ell.
\end{equation}
Set $k_0 := \max(k^\prime, k^{\prime\prime})$.
By \eqref{equ:bounded-apart} and Lemma \ref{prop:basic(1)}\ref{it:prop:basic(1)(v)}, we have that
$\sup \Delta(\UUc_k) \le \rho_k < \infty$ for all $k$. So, it is sufficient to show that there exists $D \in \mathbf N$ such that $\sup \Delta(\UUc_k) \le D$ for all large $k$. To this end, let
$$
D := \max\bigl((1+K)\ell, {\max}_{0 \le i \le \ell-1} \sup \Delta({\UUc_{k_0+ i}})\bigr) \in \mathbf N^+.
$$
We will prove by (strong) induction that $\sup \Delta(\UUc_k) \le D$ for $k \ge k_0$.

If $k_0 \le k < k_0 + \ell$, the claim is obvious. Therefore, let $\kappa \ge k_0 + \ell$, and assume the conclusion is true for every $k \in \llb k_0, \kappa - 1 \rrb$. Since $\kappa - \ell \ge k_0$, $\UUc_{i}$ is non-empty, and hence $\lambda_{i} \in \mathbf N$, for every $i \in \llb \kappa - \ell, \kappa \rrb$.
In view of \eqref{equ:keep-strong} and Lemma \ref{prop:basic(1)}\ref{it:prop:basic(1)(iv)}, it follows that
\begin{equation}
\label{equ:make-it-easy}
\mathscr{V}_\kappa := \fixed[-0.2]{\text{ }} \bigl(\ell + \delta \cdot \fixed[-0.1]{\text{ }} \llb 0, K \rrb \bigl) \fixed[-0.2]{\text{ }} + \UUc_{\kappa - \ell} \subseteq \UUc_{\ell} + \UUc_{\kappa - \ell} \subseteq \UUc_{\kappa}.
\end{equation}
In addition, we have
\begin{equation}
\label{equ:min-max-of-Vk}
\sup \mathscr{V}_\kappa = \ell + \delta K + \rho_{\kappa - \ell} \in \UUc_\kappa
\quad\text{and}\quad
\inf \mathscr{V}_\kappa = \ell + \lambda_{\kappa - \ell} \in \UUc_\kappa.
\end{equation}
Consequently, we derive from \eqref{equ:bounded-apart} that
$$
0 \le \sup \UUc_\kappa - \sup \mathscr{V}_\kappa \le \rho_\kappa - \rho_{\kappa - \ell} = \sum_{i = \kappa - \ell}^{\kappa - 1} (\rho_{i+1} - \rho_i) \le  \ell K < D,
$$
and in a similar way,
$$
0 \le \inf \mathscr{V}_\kappa - \inf \UUc_\kappa = \ell + \lambda_{\kappa - \ell} - \lambda_\kappa = \ell + \sum_{i = \kappa - \ell}^{\kappa - 1} (\lambda_i - \lambda_{i+1}) \le (1 + K)\ell \le D.
$$
Thus, we are left to show that $\sup \Delta(\mathscr U_\kappa^\ast) \le D$, where
$$
\mathscr U_\kappa^\ast := \UUc_\kappa \cap \llb \ell + \lambda_{\kappa - \ell}, \ell + \delta K + \rho_{\kappa - \ell} \rrb.
$$
For, we obtain from \eqref{equ:make-it-easy} and \eqref{equ:min-max-of-Vk} that
$\mathscr V_\kappa \subseteq \mathscr U_\kappa^\ast$, $\sup \mathscr V_\kappa = \sup \mathscr U_\kappa^\ast$, and $
\inf \mathscr V_\kappa = \inf \mathscr U_\kappa^\ast$. Therefore, we see from Lemmas \ref{lem:delta-set-of-unions}\ref{it:lem:delta-set-of-unions(iii)} and \ref{lem:basic-properties-of-Delta} and the induction hypothesis, since $\kappa - \ell \in \llb k_0, \kappa - 1 \rrb$, that
$$
\sup \Delta(\mathscr U_\kappa^\ast) \le \sup \Delta(\mathscr V_\kappa) \le \sup \Delta(\UUc_{\kappa - \ell}) \le D.
$$
\ref{it:th:structure-theorem-for-unions(ii)} Let $r \in \mathbf N^+$. We infer from \eqref{equ:bounded-apart} that $(k + \delta \cdot \mathbf Z) \cap \llb \rho_{k-r}, \rho_k - (K+1)r\fixed[0.1]{\text{ }}  \rrb$ is empty for all but finitely many $k$, because $
\rho_k - \rho_{k-r} \le rK$ for $k \ge k^{\prime\prime}$ (cf. the proof of Corollary \ref{th:finite-delta-set-bound-on-growth-rate-of-rho_k}).
So, we conclude from \ref{it:th:structure-theorem-for-unions(i)} and Theorem \ref{th:characterization-of-structure-theorem} (applied with $N = K + 1$) that $\LLc$ satisfies the Structure Theorem for Unions.
\end{proof}
Finally, we combine some of the results obtained so far and establish a strong form of the Structure Theorem for Unions, valid for any subadditive family with accepted elasticity.
\begin{definition}
\label{def:strong-structure-theorem}
We say that a family $\mathscr L \subseteq \mathcal P(\mathbf N)$ satisfies the \textit{Strong Structure Theorem for Unions} if it satisfies the Structure Theorem for Unions and there exist $\mu, k_0 \in \mathbf N^+$ such that
$$\bigl((\mathscr U_k - \inf \mathscr U_k) \cap \llb 0, M \rrb\bigr)_{k \ge 1}
\quad\text{and}\quad
\bigl((\sup \mathscr U_k - \mathscr U_k) \cap \llb 0, M \rrb \bigr)_{k \ge 1}
$$
are $\mu$-periodic sequences for every $M \in \mathbf N$.
\end{definition}
We do not know whether there exists a subadditive, primitive subfamily of $\mathcal P(\mathbf N)$ with \textit{finite} elasticity that satisfies the Structure Theorem, but not the Strong Structure Theorem for Unions. However, on a positive note, the following holds:
\begin{theorem}
\label{th:summarizing-theorem}
Let $\LLc \subseteq \mathcal P(\mathbf N)$ be a subadditive, primitive family with accepted elasticity. Set $\delta^{\fixed[0.2]{\text{ }} \prime} := 1$ if $\Delta(\LLc) = \emptyset$ and $\delta^{\fixed[0.2]{\text{ }} \prime} := \delta$ otherwise.
Then $\LLc$ satisfies the Strong Structure Theorem for Unions.
\end{theorem}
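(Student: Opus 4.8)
The plan is to obtain the statement by assembling results already proved in \S\ref{sec:basic-properties-of-directed-families}, so essentially no new argument is needed. First I would isolate the degenerate case $\DeltaF{\LLc} = \emptyset$: here Lemma~\ref{prop:basic(1)}\ref{it:prop:basic(1)(ii)} gives $\UUc_k \subseteq \{k\}$ for every $k$, and primitivity together with Lemma~\ref{lem:non-emptyness-of-multiples}\ref{it:lem:non-emptyness-of-multiples(ii)} gives $\UUc_k = \{k\}$ for all large $k$; so the Structure Theorem for Unions holds trivially (with difference $\delta^{\prime} = 1$: take $d = 1$, $M = 1$ in Definition~\ref{def:structure-theorem}), and both sequences of Definition~\ref{def:strong-structure-theorem} are eventually the constant sequence with value $\{0\}$, hence $1$-periodic from some point on. From here on I would assume $\DeltaF{\LLc} \ne \emptyset$, so that $\delta^{\prime} = \delta \in \mathbf N^+$, and record that $\LLc$ in fact has accepted non-zero (hence finite) elasticity: primitivity forces $\wp(\LLc) = 1 \ne 0$, so $\LLc \not\subseteq \{\emptyset, \{0\}\}$, which by the contrapositive of Lemma~\ref{lem:inequalities_with_elasticities}\ref{it:lem:inequalities_with_elasticities(i)} rules out $\rho = 0$, while $\LLc \ne \emptyset$ plus the standing hypothesis of accepted elasticity gives $\rho = \rho(L) < \infty$ for some $L \in \LLc$; thus $1 \le \rho < \infty$.

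For the Structure Theorem for Unions I would verify the growth hypothesis of Theorem~\ref{th:structure-theorem-for-unions}. Proposition~\ref{th:key-theorem_1} supplies $m \in \mathbf N^+$ with $\rho_{k+m} = \rho_k + m\rho$ and $\lambda_{k+m} = \lambda_k + m\lambda$ for all large $k$, where $m\rho$ and $m\lambda$ are positive integers, equal to $\rho_m$ and $\lambda_m$ (cf.\ Proposition~\ref{th:key-theorem_1}\ref{it:th:key-theorem_1(i)}). In particular $\rho_{k+m} \le \rho_k + m\rho$ and $\lambda_k - m\lambda \le \lambda_{k+m}$ eventually, so condition~\ref{it:lem:growth-rate-of-local-elasticities-in-weakly-directed-families(b)} of Lemma~\ref{lem:growth-rate-of-local-elasticities-in-weakly-directed-families} holds in both of its forms (taking $q := m$, and $N := m\rho$ for the upper elasticities, $N := m\lambda$ for the lower). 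By that lemma there is $K \in \mathbf N$ with $\rho_{k+1} \le \rho_k + K$ and $\lambda_k - K \le \lambda_{k+1}$ for all large $k$; and since $\rho_k \le k\rho < \infty$ by Lemma~\ref{lem:inequalities_with_elasticities}\ref{it:lem:inequalities_with_elasticities(iii)}, we even get $\rho_{k+1} \le \rho_k + K < \infty$ for all large $k$. Theorem~\ref{th:structure-theorem-for-unions} then applies and gives the Structure Theorem for Unions; moreover Proposition~\ref{prop:the-difference-is-necessarily-delta} forces its difference to equal $\delta = \delta^{\prime}$.

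For the periodicity clause of Definition~\ref{def:strong-structure-theorem} essentially nothing remains: since $\LLc$ is subadditive, primitive and has accepted elasticity, Theorem~\ref{cor:main-corollary} supplies $\mu \in \mathbf N^+$ --- crucially \emph{independent} of $M$ --- such that, for every $M \in \mathbf N$ and all large $k$, one has $(\rho_{k+\mu} - \UUc_{k+\mu}) \cap \llb 0, M \rrb = (\rho_k - \UUc_k) \cap \llb 0, M \rrb$ and $(\UUc_{k+\mu} - \lambda_{k+\mu}) \cap \llb 0, M \rrb = (\UUc_k - \lambda_k) \cap \llb 0, M \rrb$. Since $\rho_k = \sup \UUc_k$ and $\lambda_k = \inf \UUc_k$, these identities say verbatim that the sequences $\bigl((\sup \UUc_k - \UUc_k) \cap \llb 0, M \rrb\bigr)_{k \ge 1}$ and $\bigl((\UUc_k - \inf \UUc_k) \cap \llb 0, M \rrb\bigr)_{k \ge 1}$ are $\mu$-periodic from some point on, for each fixed $M$. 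Combining the two halves then shows that $\LLc$ satisfies the Strong Structure Theorem for Unions.

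I do not expect a genuine obstacle --- the heavy lifting sits in Propositions~\ref{th:key-theorem_1}, \ref{prop:periodicity(1)} and Theorem~\ref{cor:main-corollary} --- but two bookkeeping points deserve care. The first is to make sure the hypotheses of the cited results are actually in force, in particular that $\rho \ne 0$ (handled above via primitivity) and that one may pass from the ``$m$-step'' relations of Proposition~\ref{th:key-theorem_1} to the ``one-step'' growth bounds demanded by Theorem~\ref{th:structure-theorem-for-unions}, which is precisely the role of Lemma~\ref{lem:growth-rate-of-local-elasticities-in-weakly-directed-families}. The second is the quantifier order in Definition~\ref{def:strong-structure-theorem}: the period $\mu$ and the difference $\delta^{\prime}$ come out uniform, but the threshold past which the truncated difference sequences are $\mu$-periodic genuinely depends on $M$ --- and it must, since when $\rho > 1$ the cardinalities $|\UUc_k|$ grow without bound, so the untruncated sequences $(\sup \UUc_k - \UUc_k)_{k}$ are not eventually periodic at all; hence the statement should be read, consistently with Theorem~\ref{th:main-theorem-intro}, as ``for every $M$, eventually $\mu$-periodic''.
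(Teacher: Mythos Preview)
Your proposal is correct and follows essentially the same route as the paper's proof: treat the case $\Delta(\mathscr L) = \emptyset$ separately via Lemmas~\ref{prop:basic(1)}\ref{it:prop:basic(1)(ii)} and~\ref{lem:non-emptyness-of-multiples}\ref{it:lem:non-emptyness-of-multiples(ii)}, and otherwise feed the $m$-step identities of Proposition~\ref{th:key-theorem_1} through Lemma~\ref{lem:growth-rate-of-local-elasticities-in-weakly-directed-families} into Theorem~\ref{th:structure-theorem-for-unions} for the Structure Theorem, then invoke Theorem~\ref{cor:main-corollary} for the periodicity clause. Your treatment is more explicit in places (e.g., you justify $\rho \ne 0$ via primitivity rather than directly from $\Delta(\mathscr L) \ne \emptyset$, and you flag the quantifier subtlety in Definition~\ref{def:strong-structure-theorem}), but the logical skeleton is identical.
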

\begin{proof}
If $\Delta(\LLc) = \emptyset$, we get from Lemmas \ref{prop:basic(1)}\ref{it:prop:basic(1)(ii)} and \ref{lem:non-emptyness-of-multiples}\ref{it:lem:non-emptyness-of-multiples(ii)} that $\UUc_k = \{k\}$ for all but finitely many $k$, so we can take $\mu := 1$ and the claim is trivial.
Thus we assume from now on that $\Delta(\LLc)$ is non-empty.

Then $\rho \ne 0$, and since $\LLc$ has accepted elasticity, we obtain from Proposition \ref{th:key-theorem_1} that there is $m \in \mathbf N^+$
such that
$
\rho_{k + m} \le \rho_{k} + m \rho$ and
$\lambda_{k + m} \ge \lambda_{k} - m\rho$ for all large $k$.
Consequently, we derive from Lemma \ref{lem:growth-rate-of-local-elasticities-in-weakly-directed-families} and
Theorems \ref{th:structure-theorem-for-unions} and \ref{cor:main-corollary} that $\mathscr L$ satisfies the Strong Structure Theorem for Unions.
\end{proof}
We conclude the section with a corollary generalizing \cite[Corollary 2.3(1)]{FGKT}. To this end, we say that a set $L \subseteq \mathbf N$ is an \textit{almost arithmetic progression} (shortly, AAP) \textit{with difference $d$ and bound $M$}, for some $d \in \mathbf N^+$ and $M \in \mathbf N$, if there exists $z \in \mathbf Z$ such that
$$
(z + d \cdot \mathbf Z) \cap \llb \inf L + M, \sup L - M \rrb \subseteq L \subseteq z + d \cdot \bf Z,
$$
see \cite[Definition 4.2.1]{GeHK06} for an equivalent, though slightly different, definition.
\begin{corollary}
\label{cor:summarizing}
Let $\LLc \subseteq \mathcal P(\mathbf N)$ be a subadditive family satisfying the Structure Theorem for Unions, and assume $\rho_k < \infty$ for every $k \in \mathbf N$. Then there is $M \in \mathbf N$ such that $\UUc_k$ is an \textup{AAP} with difference $\delta^{\fixed[0.2]{\text{ }} \prime}$ and bound $M$ for all $k \in \mathbf N$, where $\delta^{\fixed[0.2]{\text{ }} \prime} := 1$ if $\Delta(\LLc) = \emptyset$ and $\delta^{\fixed[0.2]{\text{ }} \prime} := \delta$ otherwise.
\end{corollary}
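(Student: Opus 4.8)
The plan is to read off the conclusion from the Structure Theorem for Unions (Definition~\ref{def:structure-theorem}) together with the information about $\Delta_\cup(\LLc)$ collected earlier in the section, after disposing of two routine reductions. First, if $\DeltaF{\LLc}=\emptyset$, then Lemma~\ref{prop:basic(1)}\ref{it:prop:basic(1)(ii)} gives $\UUc_k\subseteq\{k\}$ for every $k$, so each $\UUc_k$ is empty or a singleton, hence an AAP with difference $1=\delta^\prime$ and bound $0$; so we may take $M:=0$ and are done. Assume henceforth $\DeltaF{\LLc}\neq\emptyset$, whence $\delta=\delta^\prime\in\mathbf N^+$. Second, by Remark~\ref{rem:primitiveness} we may assume $\LLc$ is primitive: then $\wp:=\wp(\LLc)$ is a positive integer, $\LLc^\ast:=\{\wp^{-1}\cdot L:L\in\LLc\}$ is subadditive and primitive with $\rho_k(\LLc^\ast)=\wp^{-1}\rho_{\wp k}(\LLc)<\infty$ for all $k$, a direct check gives $\DeltaF{\LLc}=\wp\cdot\DeltaF{\LLc^\ast}$ (so $\delta=\wp\,\delta(\LLc^\ast)$) and shows that $\LLc^\ast$ again satisfies the Structure Theorem for Unions, and since $\UUc_{\wp k}(\LLc)=\wp\cdot\UUc_k(\LLc^\ast)$ while $\UUc_k(\LLc)=\emptyset$ for $k\notin\wp\mathbf N$, a uniform AAP description for $\LLc^\ast$ with difference $\delta(\LLc^\ast)$ and bound $M$ pulls back to one for $\LLc$ with difference $\delta$ and bound $\wp M$.

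So let $\LLc$ be subadditive, primitive, with $\DeltaF{\LLc}\neq\emptyset$ and $\rho_k<\infty$ for all $k$. By Lemma~\ref{lem:delta-set-of-unions}\ref{it:lem:delta-set-of-unions(iii)} we have $\Delta_\cup(\LLc)\subseteq\delta\cdot\mathbf N^+$, and since $k\in\UUc_k$ whenever $\UUc_k\neq\emptyset$, Corollary~\ref{cor:delta_sets}\ref{it:cor:delta_sets(ii)} gives $\UUc_k\subseteq k+\delta\cdot\mathbf Z$ for every $k$. By hypothesis there are $d\in\mathbf N^+$ and $M_0\in\mathbf N$ with $(k+d\cdot\mathbf Z)\cap\llb\lambda_k+M_0,\rho_k-M_0\rrb\subseteq\UUc_k\subseteq k+d\cdot\mathbf Z$ for all large $k$, and Proposition~\ref{prop:the-difference-is-necessarily-delta} forces $d=\delta$. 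Since $\lambda_k=\inf\UUc_k$ and $\rho_k=\sup\UUc_k$, this says exactly that $\UUc_k$ is an AAP with difference $\delta$ and bound $M_0$ for all large $k$ (trivially so if $\UUc_k=\emptyset$).

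It remains to absorb the finitely many remaining indices. For each of them $\rho_k<\infty$ makes $\UUc_k$ a finite subset of $k+\delta\cdot\mathbf Z$, and any finite nonempty $L\subseteq z+\delta\cdot\mathbf Z$ is an AAP with difference $\delta$ and bound $\sup L-\inf L$, because $\llb\inf L+(\sup L-\inf L),\,\sup L-(\sup L-\inf L)\rrb$ is either empty or the singleton $\{\inf L\}$, in both cases contained in $L$. Letting $M$ be the maximum of $M_0$ and of the finitely many numbers $\rho_k-\lambda_k$ over the exceptional $k$ with $\UUc_k\neq\emptyset$, and observing that an AAP with difference $\delta$ and bound $M^\prime$ is also one with bound $M\geq M^\prime$, we obtain that $\UUc_k$ is an AAP with difference $\delta=\delta^\prime$ and bound $M$ for every $k\in\mathbf N$, as claimed.

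The one genuinely non-formal ingredient is the identification $d=\delta$ of the difference supplied by the Structure Theorem, which is exactly Proposition~\ref{prop:the-difference-is-necessarily-delta}; the rest is bookkeeping about AAPs and about finitely many small indices. The mildly delicate point is the primitive reduction: one must verify that $\LLc^\ast$ still satisfies the Structure Theorem and that passing from $\LLc^\ast$ to $\LLc$ by $\wp$-dilation keeps the AAP property with a controlled bound. If one prefers to avoid $\LLc^\ast$ altogether, the same conclusion can be reached by re-running the proof of Proposition~\ref{prop:the-difference-is-necessarily-delta} along the subsequence of indices lying in $\wp(\LLc)\cdot\mathbf N$ (all inclusions being vacuous for the other indices, where the relevant union is empty).
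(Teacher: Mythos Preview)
Your proof is correct and follows essentially the same line as the paper's: split on whether $\Delta(\LLc)$ is empty, invoke Proposition~\ref{prop:the-difference-is-necessarily-delta} to identify $d=\delta$, and then enlarge the bound to absorb the finitely many small indices (the paper uses $N:=1+\max(\rho_0,\ldots,\rho_{k_0-1})$ where you use $\max_k(\rho_k-\lambda_k)$, which is equivalent). You are in fact more careful than the paper on one point: Proposition~\ref{prop:the-difference-is-necessarily-delta} assumes primitivity, and you explicitly reduce to that case via Remark~\ref{rem:primitiveness}, whereas the paper's proof applies the proposition without comment.
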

\begin{proof}
If $\Delta(\LLc)$ is empty, Lemma \ref{prop:basic(1)}\ref{it:prop:basic(1)(ii)} yields $\UUc_{k} \subseteq \{k\}$ for all $k$, and the claim is trivial. Otherwise, it follows from our assumptions and Proposition \ref{prop:the-difference-is-necessarily-delta} that there exist $k_0, M \in \mathbf N$ such that, for $k \ge k_0$, $\UUc_k$ is an AAP with difference $\delta$ and bound $M$. Since $\rho_k < \infty$ for every $k$, this, in turn, implies that $\UUc_0, \UUc_1, \ldots$ are all AAPs with difference $\delta$ and bound $\max(M, N)$, where $N := 1+\max(\rho_0, \ldots, \rho_{k_0-1})$.
\end{proof}
\section{A focus on systems of sets of lengths}
\label{sec:focus-on-sets-of-lengths}
In this short section, we apply the main results of \S{ }\ref{sec:basic-properties-of-directed-families} to the structure of unions of sets of lengths of a monoid. We start with a proof of the theorems stated in \S{ }\ref{sec:intro} (we will freely use notations and terminology from the introduction and Examples \ref{exa:systems-of-sets-of-lengths} and \ref{exa:system-of-sets-of-lengths}).
\begin{proof}[Proof of Theorems \textup{\ref{th:structure-theorem}} and \textup{\ref{th:main-theorem-intro}}]
We know from Example \ref{exa:system-of-sets-of-lengths} that $\mathscr L(H)$ is a directed subfamily of $\mathcal P(\mathbf N)$, unless the set of atoms of $H$ is empty, in which case $\mathscr L(H) = \fixed[-0.2]{\text{ }} \bigl\{\{0\}\bigr\}$. Moreover, it is clear that $\Delta(H) = \Delta(\mathscr L(H))$ and $\UUc_k(H) = \UUc_k(\mathscr L(H))$ for all $k$, and that $H$ has accepted elasticity if and only if so does $\mathscr L(H)$. This is enough to conclude the proof, by applying Theorems \ref{th:structure-theorem-for-unions}\ref{it:th:structure-theorem-for-unions(ii)} and \ref{th:summarizing-theorem} to $\mathscr L(H)$, and by noticing that every directed subfamily of $\mathcal P(\mathbf N)$ is primitive.
\end{proof}
The next step is a characterization of when a monoid satisfies the Structure Theorem for Unions:
\begin{theorem}
\label{th:characterization-of-structure-theorem-for-monoids}
Let $H$ be a monoid, and set $\delta^{\fixed[0.2]{\text{ }} \prime} := 1$ if $\Delta(H) = \emptyset$ and $\delta^{\fixed[0.2]{\text{ }} \prime} := \inf \Delta(H)$ otherwise. Then $H$ satisfies the Structure Theorem for Unions if and only if there exist $D, N \in \mathbf N^+$ such that, for all large $k$, the following conditions hold:
\begin{enumerate*}[label={\rm (\roman{*})}] \item\label{it:th:characterization-of-structure-theorem-for-monoids(i)}
	$\sup \Delta(\UUc_k(H)) \le D$;	
\item\label{it:th:characterization-of-structure-theorem-for-monoids(ii)}
	$(k + \delta \cdot \mathbf Z) \cap \llb \rho_{k - \ell} + \ell, \rho_k - N \rrb \subseteq \UUc_k$, where $\ell$ is any positive integer with the property that $\ell + \delta \cdot \llb 0, D \rrb \subseteq \UUc_\ell(H)$.
\end{enumerate*}
\end{theorem}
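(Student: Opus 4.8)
The plan is to deduce the statement from Theorem \ref{th:characterization-of-structure-theorem} applied to the system of sets of lengths $\mathscr L := \mathscr L(H)$. Recall from Example \ref{exa:system-of-sets-of-lengths} that, when $\mathcal A(H) \ne \emptyset$, $\mathscr L$ is a directed --- hence subadditive and primitive --- subfamily of $\mathcal P(\mathbf N)$ with $\Delta(\mathscr L) = \Delta(H)$, $\mathscr U_k(\mathscr L) = \mathscr U_k(H)$, $\lambda_k(\mathscr L) = \inf \mathscr U_k(H)$, and $\rho_k(\mathscr L) = \sup \mathscr U_k(H)$ for all $k$; moreover, $H$ satisfies the Structure Theorem for Unions exactly when $\mathscr L$ does in the sense of Definition \ref{def:structure-theorem}. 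I would first dispose of the degenerate cases: if $\mathcal A(H) = \emptyset$ then $\mathscr U_k(H) = \emptyset$ for $k \ge 1$, and if $\mathcal A(H) \ne \emptyset$ but $\Delta(H) = \emptyset$ then $\mathscr U_k(H) = \{k\}$ for all large $k$ by Lemmas \ref{prop:basic(1)}\ref{it:prop:basic(1)(ii)} and \ref{lem:non-emptyness-of-multiples}\ref{it:lem:non-emptyness-of-multiples(ii)}. In both situations the left-hand side of the equivalence holds trivially and, on the right, each $\mathscr U_\ell(H)$ has at most one element, so no positive integer $\ell$ satisfies $\ell + \delta^{\prime} \cdot \llb 0, D \rrb \subseteq \mathscr U_\ell(H)$ and condition \ref{it:th:characterization-of-structure-theorem-for-monoids(ii)} is vacuous while \ref{it:th:characterization-of-structure-theorem-for-monoids(i)} reads $0 \le D$. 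Hence from now on assume $\Delta(H) \ne \emptyset$, so that $\delta = \delta^{\prime} = \inf \Delta(H) \in \mathbf N^+$ and $\mathscr U_k(H) \ne \emptyset$ for all large $k$.

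Write $D_0 := \limsup_k \sup \Delta(\mathscr U_k(H))$, which is precisely the quantity called $D$ in Theorem \ref{th:characterization-of-structure-theorem}; the core of the matter is to show that conditions \ref{it:th:characterization-of-structure-theorem-for-monoids(i)}--\ref{it:th:characterization-of-structure-theorem-for-monoids(ii)} are equivalent to condition \ref{it:th:structure_theorem(ii)} of Theorem \ref{th:characterization-of-structure-theorem} (with $\mathscr L = \mathscr L(H)$), after which the characterization follows at once from that theorem. For the direction ``$H$ satisfies the Structure Theorem $\Rightarrow$ \ref{it:th:characterization-of-structure-theorem-for-monoids(i)}--\ref{it:th:characterization-of-structure-theorem-for-monoids(ii)}'', Theorem \ref{th:characterization-of-structure-theorem} first yields $D_0 \in \mathbf N^+$; setting $D := D_0$, condition \ref{it:th:characterization-of-structure-theorem-for-monoids(i)} is then the elementary fact that an integer-valued sequence with $\limsup \le D_0$ is eventually $\le D_0$. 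For \ref{it:th:characterization-of-structure-theorem-for-monoids(ii)} I would \emph{not} rely on the particular $\ell$ furnished by Theorem \ref{th:characterization-of-structure-theorem}, but verify the inclusion for \emph{every} admissible $\ell$: from the Structure Theorem together with Proposition \ref{prop:the-difference-is-necessarily-delta} there is $M \in \mathbf N$ with $(k + \delta \cdot \mathbf Z) \cap \llb \lambda_k + M, \rho_k - M \rrb \subseteq \mathscr U_k$ for all large $k$, so it suffices to check, for a fixed $\ell$, that $\lambda_k + M \le \rho_{k-\ell} + \ell$ eventually, and then take $N := \max(M,1)$. This gap estimate comes from Lemma \ref{lem:non-emptyness-of-multiples}\ref{it:lem:non-emptyness-of-multiples(iv)}: it gives $\rho_{k-\ell} \ge k - \ell$ for $k$ large, and, applied with a sufficiently large index $i$, forces $\mathscr U_k$ to contain at least $i$ elements not exceeding $k$; since these are pairwise congruent modulo $\delta$ by Corollary \ref{cor:delta_sets}\ref{it:cor:delta_sets(ii)}, we get $\lambda_k \le k - (i-1)\delta \le k - M$ once $(i-1)\delta \ge M$. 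If instead $\rho_\kappa = \infty$ for some $\kappa$, then $\rho_k = \infty$ for all large $k$ by Lemma \ref{lem:non-emptyness-of-multiples}\ref{it:lem:non-emptyness-of-multiples(iiiq)}, the interval $\llb \rho_{k-\ell} + \ell, \rho_k - N \rrb$ is eventually empty, and \ref{it:th:characterization-of-structure-theorem-for-monoids(ii)} holds trivially.

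For the converse, assume \ref{it:th:characterization-of-structure-theorem-for-monoids(i)}--\ref{it:th:characterization-of-structure-theorem-for-monoids(ii)}. Then \ref{it:th:characterization-of-structure-theorem-for-monoids(i)} gives $D_0 \le D < \infty$, while $D_0 \ge \delta \ge 1$ because by Lemma \ref{lem:delta-set-of-unions}\ref{it:lem:delta-set-of-unions(ii)} there are infinitely many $k$ with $\inf \Delta(\mathscr U_k(H)) = \delta$; hence $D_0 \in \mathbf N^+$. By Corollary \ref{cor:delta_sets}\ref{it:cor:delta_sets(iii)} pick a positive integer $\ell$ with $\ell + \delta \cdot \llb 0, D \rrb \subseteq \mathscr U_\ell(H)$; since $D_0 \le D$ this $\ell$ also satisfies $\ell + \delta \cdot \llb 0, D_0 \rrb \subseteq \mathscr U_\ell(H)$, and feeding it, together with $N$ from \ref{it:th:characterization-of-structure-theorem-for-monoids(ii)}, into condition \ref{it:th:structure_theorem(ii)} of Theorem \ref{th:characterization-of-structure-theorem} shows that $\mathscr L(H)$, hence $H$, satisfies the Structure Theorem for Unions.

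I expect the only real obstacle to be the mismatch between the ``for some admissible $\ell$'' appearing in Theorem \ref{th:characterization-of-structure-theorem}\ref{it:th:structure_theorem(ii)} and the ``for every admissible $\ell$'' phrasing of \ref{it:th:characterization-of-structure-theorem-for-monoids(ii)}: bridging it requires the quantitative estimate $\lambda_k + M \le k \le \rho_{k-\ell} + \ell$ via Lemma \ref{lem:non-emptyness-of-multiples}\ref{it:lem:non-emptyness-of-multiples(iv)} and Corollary \ref{cor:delta_sets}\ref{it:cor:delta_sets(ii)}, together with the routine case split between the $\rho_k < \infty$ and $\rho_k \equiv \infty$ regimes. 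Everything else is a dictionary translation between $H$ and the additive model of \S{ }\ref{sec:basic-properties-of-directed-families}.
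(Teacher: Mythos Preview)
Your approach is essentially the paper's own: reduce to Theorem \ref{th:characterization-of-structure-theorem} via the identification $\mathscr L = \mathscr L(H)$ from Example \ref{exa:system-of-sets-of-lengths}, after disposing of the degenerate case $\Delta(H) = \emptyset$. The paper's proof is a two-line appeal to Theorem \ref{th:characterization-of-structure-theorem}; you go further and make explicit the passage from ``there exists an admissible $\ell$'' in \ref{it:th:structure_theorem(ii)} to ``for every admissible $\ell$'' in \ref{it:th:characterization-of-structure-theorem-for-monoids(ii)}, which the paper leaves implicit but which is in fact already contained in the proof of the implication \ref{it:th:structure_theorem(i)} $\Rightarrow$ \ref{it:th:structure_theorem(ii)} of Theorem \ref{th:characterization-of-structure-theorem} (the estimate $\lambda_k + M \le k \le \rho_{k-\ell} + \ell$ there works for any such $\ell$), so your extra care is sound and your argument is correct.
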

\begin{proof}
If $\Delta(H) = \emptyset$, the conclusion is obvious, since $\UUc_k(H) \subseteq \{k\}$ for all $k$. Otherwise, the claim follows by Theorem \ref{th:characterization-of-structure-theorem} and the same considerations as in the above proof of Theorems \ref{th:structure-theorem} and \ref{th:main-theorem-intro}.
\end{proof}
A variety of monoids (and domains) satisfying conditions \ref{it:th:characterization-of-structure-theorem-for-monoids(i)} and \ref{it:th:characterization-of-structure-theorem-for-monoids(ii)} of Theorem \ref{th:characterization-of-structure-theorem-for-monoids}, and hence the Structure Theorem for Unions, can be found in \cite[\S{ }3]{FGKT}: In this regard, note that, by Remark \ref{remark:finiteness-of-delta-set-implies-finiteness-of-Udelta-set}, condition \ref{it:th:characterization-of-structure-theorem-for-monoids(i)} is implied by the finiteness of $\Delta(H)$, as we have already observed that $\Delta(H) = \Delta(\LLc(H))$.

So from here on we restrict our attention to Theorem \ref{th:main-theorem-intro}: The goal is to identify some interesting classes of monoids with accepted elasticity. To this end, we need a few more definitions.

\begin{definition}
Let $H$ and $K$ be (multiplicatively written) monoids, and let $\varphi$ be a (monoid) homomorphism $H \to K$.
We call $\varphi$ \textit{essentially surjective} if $K = K^\times \varphi(H) K^\times$, and an \textit{equimorphism} if:
\begin{enumerate}[label={\rm (\textsc{e}\arabic{*})}]
\item\label{covariant-transfer(1)} $\varphi^{-1}(K^\times) \subseteq H^\times$ (or equivalently $\varphi^{-1}(K^\times) = H^\times$).
\item\label{covariant-transfer(2)} $\varphi$ is \textit{atom-preserving}, i.e., $\varphi(a) \in \mathcal A(K)$ for all $a \in \mathcal A(H)$.
\item\label{covariant-transfer(3)} If $x \in H$ and $\varphi(x) = b_1 \cdots b_n$ for some $b_1, \ldots, b_n \in \mathcal A(K)$, then there exist $\sigma \in \mathfrak S_n$ and $a_1, \ldots, a_n \in \mathcal A(H)$ such that $x = a_1 \cdots a_n$ and $b_{\sigma(i)} \simeq_K \varphi(a_i)$ for every $i \in \llb 1, n \rrb$.
\end{enumerate}
We say that $H$ is \textit{essentially equimorphic} to $K$ if there is an essentially surjective equimorphism from $H$ to $K$; and a \textit{transfer Krull monoid of finite type} if $H$ is essentially equimorphic to a monoid of zero-sum sequences over an abelian group $G$ with support in a finite set $G_0 \subseteq G$.
\end{definition}
We refer to \cite[Remarks 2.17--2.20]{FTr} for a critical comparison of these definitions with analogous ones from the literature on factorization theory: In particular, a weak transfer ho\-mo\-mor\-phism in the sense of
\cite[Definition 2.1]{BaSm}
is an essentially surjective equimorphism, by \cite[Remark 2.19]{FTr}.

The interest here in equimorphisms stems from the next proposition, which provides sufficient conditions for a monoid to have accepted elasticity that are often met in practice (see below for examples), and where a monoid $H$ is said to satisfy the Strong Structure Theorem for Unions if so does $\mathscr L(H)$.
\begin{theorem}
\label{prop:fundamental-properties-of-dense-equis}
Let $\varphi: H \to K$ an essentially surjective equimorphism. The following hold:
\begin{enumerate}[label={\rm (\roman{*})}]
\item\label{it:prop:fundamental-properties-of-dense-equis(i)}
For every $y \in K \setminus K^\times$ there exists $x \in H \setminus H^\times$ with $y \simeq_K \varphi(x)$ and $\mathsf L_H(x) = \mathsf L_K(y)$.
\item\label{it:prop:fundamental-properties-of-dense-equis(ii)} $\mathscr{L}(H) = \mathscr{L}(K)$.
\item\label{it:prop:fundamental-properties-of-dense-equis(iii)} If $K$ is a cancellative, commutative monoid and the quotient $K/K^\times$ is finitely generated, then $H$ has accepted elasticity and satisfies the Strong Structure Theorem for Unions.
\end{enumerate}
\end{theorem}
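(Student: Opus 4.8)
The plan is to establish (i) directly from the definition of an essentially surjective equimorphism, deduce (ii) from (i), and obtain (iii) from (ii) by invoking Theorem \ref{th:summarizing-theorem}. For (i): given $y \in K \setminus K^\times$, essential surjectivity provides $u, v \in K^\times$ and $x \in H$ with $y = u\,\varphi(x)\,v$, so $y \simeq_K \varphi(x)$ by definition of $\simeq_K$; moreover $x \notin H^\times$, for otherwise $\varphi(x) \in K^\times$ (a monoid homomorphism carries units to units) and then $y = u\varphi(x)v \in K^\times$. To reach $\mathsf L_H(x) = \mathsf L_K(y)$ I would argue in two steps. First, $\mathsf L_K(y) = \mathsf L_K(\varphi(x))$, because multiplying a factorization into atoms by units on the outside changes neither its length nor the atom-ness of its factors, so sets of lengths are $\simeq_K$-invariant. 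Second, $\mathsf L_H(x) = \mathsf L_K(\varphi(x))$: the inclusion $\subseteq$ follows by applying $\varphi$ to a factorization of $x$ and using axiom \ref{covariant-transfer(2)}, while $\supseteq$ is exactly the lifting axiom \ref{covariant-transfer(3)}, which turns a length-$n$ factorization of $\varphi(x)$ into a length-$n$ factorization of $x$.

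For (ii), the identity $\mathsf L_H(x) = \mathsf L_K(\varphi(x))$ just obtained (which uses only \ref{covariant-transfer(2)}--\ref{covariant-transfer(3)}, not essential surjectivity) gives $\mathscr L(H) \subseteq \mathscr L(K)$ at once. For the reverse inclusion, take $\emptyset \ne L \in \mathscr L(K)$ and write $L = \mathsf L_K(y)$. For a non-identity unit $y'$ one has $\mathsf L_K(y') = \emptyset$: a factorization of $y'$ into $n \ge 1$ atoms would, after multiplying by $y'^{-1}$ on the left, exhibit $1_K \in \langle \mathcal A(K) \rangle_K$, contradicting \cite[Lemma 2.2(i)]{FTr}. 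Since $L \ne \emptyset$, we may thus assume $y = 1_K$ (whence $L = \{0\} = \mathsf L_H(1_H)$) or $y \in K \setminus K^\times$ (whence (i) supplies $x$ with $\mathsf L_H(x) = L$); either way $L \in \mathscr L(H)$.

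For (iii), (ii) gives $\mathscr L(H) = \mathscr L(K)$, and since ``accepted elasticity'' and ``the Strong Structure Theorem for Unions'' are properties of the associated family, it suffices to show that $\mathscr L(K)$ has accepted elasticity and then apply Theorem \ref{th:summarizing-theorem}. If $K = K^\times$ this is trivial, as $\mathscr L(K) = \bigl\{\{0\}\bigr\}$. Otherwise $K/K^\times$ is a non-trivial finitely generated reduced cancellative commutative monoid, hence atomic with a finite, non-empty set of atoms; so $\mathcal A(K) \ne \emptyset$ and $\mathscr L(K)$ is a directed — in particular subadditive and primitive — finitary family by Example \ref{exa:system-of-sets-of-lengths}. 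It then remains to invoke the classical structure theory of finitely generated monoids, according to which such a monoid has finite, accepted elasticity (see \cite[Theorem 3.1.4]{GeHK06}; note that passing from $K$ to $K/K^\times$ leaves sets of lengths unchanged, which is the special case of (ii) for the canonical epimorphism $K \to K/K^\times$). Theorem \ref{th:summarizing-theorem} applied to $\mathscr L(K) = \mathscr L(H)$ then yields the Strong Structure Theorem for Unions, and accepted elasticity transfers back through (ii).

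The one genuinely delicate point I expect is inside (i): reconciling the lifting axiom \ref{covariant-transfer(3)} — which a priori supplies an arbitrary permutation $\sigma$ and only the associate relations $b_{\sigma(i)} \simeq_K \varphi(a_i)$ — with the $\simeq_K$-invariance of sets of lengths, so that \emph{lengths} and not merely factorization shapes are transported; in the possibly non-commutative and non-cancellative setting one cannot simply quote the usual transfer-homomorphism argument. Everything else is routine bookkeeping, and (iii) reduces to Theorem \ref{th:summarizing-theorem} modulo the quoted finiteness result for finitely generated monoids.
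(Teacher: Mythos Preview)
Your proposal is correct and follows essentially the same approach as the paper. The paper delegates the two identities in (i) to \cite[Lemma 2.2(iv) and Theorem 2.22(i)]{FTr} where you spell them out from axioms \ref{covariant-transfer(2)}--\ref{covariant-transfer(3)}, and in (iii) the paper appeals to Theorem \ref{th:main-theorem-intro} rather than directly to Theorem \ref{th:summarizing-theorem}, but since the former is derived from the latter this is the same argument in slightly different packaging; your extra care with the trivial case $K = K^\times$ and with the passage $K \to K/K^\times$ is harmless bookkeeping that the paper absorbs into the citation of \cite[Theorem 3.1.4]{GeHK06}.
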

\begin{proof}
\ref{it:prop:fundamental-properties-of-dense-equis(i)} Pick $y \in K \setminus K^\times$. Since $\varphi$ is essentially surjective, $y = u \fixed[0.2]{\text{ }} \varphi(x) \fixed[0.2]{\text{ }} v$ for some $x \in H$ and $u, v \in K^\times$. Accordingly, \cite[Lemma 2.2(iv) and Theorem 2.22(i)]{FTr} yield $\mathsf L_K(y) = {\sf L}_K(\varphi(x)) = {\sf L}_H(x)$. Moreover, $x$ is not a unit of $H$, otherwise $y = u \fixed[0.2]{\text{ }} \varphi(x) \fixed[0.2]{\text{ }} v \in K^\times$ (because units are preserved under homomorphisms).

\ref{it:prop:fundamental-properties-of-dense-equis(ii)} We know from \cite[Theorem 2.22]{FTr} that $\mathscr{L}(H) \subseteq \mathscr{L}(K)$, and we have by \ref{it:prop:fundamental-properties-of-dense-equis(i)} that $\mathscr{L}(K) \subseteq \mathscr{L}(H)$.

\ref{it:prop:fundamental-properties-of-dense-equis(iii)} Since $H$ is essentially equimorphic to $K$, we get from \ref{it:prop:fundamental-properties-of-dense-equis(ii)} that $H$ and $K$ have the same system of sets of lengths, and hence $\rho(\mathscr L(H)) = \rho(\mathscr L(K))$. This shows that $H$ has accepted elasticity, because the assumptions on $K$ imply, by \cite[Theorem 3.1.4]{GeHK06}, that $\rho(\mathscr L(K)) = \rho(L) < \infty$ for some $L \in \mathscr L(K)$. The rest is a consequence of Theorem \ref{th:main-theorem-intro}.
\end{proof}
Now we provide a short list of monoids (and domains) with accepted elasticity: By Theorem \ref{th:main-theorem-intro}, all of them satisfy the Strong Structure Theorem for Unions.
\begin{examples}
\label{exa:short-list}
(1) Transfer Krull monoids of finite type, as we get from our definitions and Theorem \ref{prop:fundamental-properties-of-dense-equis}\ref{it:prop:fundamental-properties-of-dense-equis(iii)}: This is a fairly large, important class of monoids, which contains (among others):
    \begin{enumerate}[label={\rm (\roman{*})}]
    \item All Krull monoids with finite class group, see \cite[Theorems 3.4.10]{GeHK06}, and hence the multiplicative monoid of non-zero elements of any commutative Dedekind domain with finite class group.
    \item Every classical maximal $\mathbf Z_K$-order $R$ in a central simple algebra over a number field $K$ such that all stably free left $R$-ideals are free (here, $\mathbf Z_K$ denotes the ring of integers of $K$), as we infer from a much more comprehensive result of D.~Smertnig on classical maximal orders over holomorphy rings in global fields,
        see \cite[Theorem 1.1]{Sm13}.
    \end{enumerate}
    For further examples along the same lines, see \cite[\S{ }4, pp. 977--978]{Ge16c} and references therein.
\vskip 0.1cm
\noindent{}(2) Every $v$-Noetherian weakly Krull commutative monoid $H$ with non-empty conductor $(H: \widehat{H})$ and finite elasticity such that the $v$-class group of $H$ is finite and the localization of $H$ at $\mathfrak p$ is finitely primary for any minimal prime ideal $\mathfrak p$ of $H$ {(see \cite{GeHK06} for notations and terminology),} as implied by \cite[Theorem 4.4]{GeZh17}: Remarkably, this class includes all orders in number fields with finite elasticity, and the finiteness of the elasticity is equivalent to the bijectivity of the canonical map $\pi: {\rm spec}(\widehat{H}) \to {\rm spec}(H): \mathfrak p \mapsto \mathfrak p \cap H$.
\vskip 0.1cm
\noindent{}(3) All numerical monoids, viz., submonoids $H$ of $(\mathbf N, +)$ with $|\mathbf N \setminus H| < \infty$:
For one thing, these are not transfer Krull monoids of finite type unless they are equal to $(\mathbf N, +)$, as we obtain from \cite[Theorem 5.5.2]{GeSchZh17}. But they are cancellative, finitely generated, commutative, and reduced (i.e., the group of units is trivial), and hence have accepted elasticity by Theorem \ref{prop:fundamental-properties-of-dense-equis}\ref{it:prop:fundamental-properties-of-dense-equis(iii)}.
%
\vskip 0.1cm
\noindent{}(4) Some local arithmetical congruence monoids \cite[Theorem 1.1]{Pono}, where an arithmetical congruence monoid is a submonoid of the multiplicative monoid of $\mathbf N$ of the form $\{1\} \cup (a + b \cdot \mathbf N)$ with $a, b \in \mathbf N^+$ and $a^2 \equiv a \bmod b$, and is called local if $\gcd(a,b) = p^r$ for some prime $p$ and $r \in \mathbf N$.
\vskip 0.1cm
\noindent{}(5) All Puiseux monoids (that is, submonoids of the non-negative rational numbers under addition) whose set of atoms has both a maximum and a minimum, see \cite[Theorem 3.4]{GoONe17}.
\end{examples}
To finish, we give an example, due to Alfred Geroldinger, of a Dedekind domain whose multiplicative monoid has accepted elasticity and infinite set of distances (cf. Remark \ref{remark:finiteness-of-delta-set-implies-finiteness-of-Udelta-set}).
\begin{example}
\label{exa:infinite-set-of-distances}
We get from \cite[Proposition 4.1.2.5]{GeHK06} that, for all $n, r \in \mathbf N^+$ with $2 \le n \ne r+1$, there are an abelian group $H$ and a finite set $H_0 \subseteq H$ for which
$$
\Delta(\mathcal B(H_0)) = \fixed[-0.2]{\text{ }} \bigl\{|n-r-1|\bigr\}
\quad\text{and}\quad
\rho(\mathcal B(H_0)) = \max\fixed[-1]{\text{ }} \left(\frac{n}{r+1}, \frac{r+1}{n}\right)\fixed[-0.6]{\text{ }},
$$
where $\mathcal B(H_0)$ denotes the monoid of zero-sum sequences over $H$ with support in $H_0$ (see Example \ref{exa:systems-of-sets-of-lengths}). In particular, since $|H_0| < \infty$, we find by \cite[Theorem 3.4.2.1]{GeHK06} that $\mathcal B(H_0)$ is a reduced, finitely generated, commutative, cancellative monoid, and hence has accepted elasticity by Theorem \ref{prop:fundamental-properties-of-dense-equis}\ref{it:prop:fundamental-properties-of-dense-equis(iii)}.

It follows that, for every $k \ge 1$, there are an abelian group $G_k$ and a set $G_k^{\fixed[0.2]{\text{ }}\prime} \subseteq G_k$ such that $\Delta(\mathcal B(G_k^{\fixed[0.2]{\text{ }}\prime})) = \{k\}$, $\rho(\mathcal B(G_k^{\fixed[0.2]{\text{ }}\prime})) = 2$, and $\mathcal B(G_k^{\fixed[0.2]{\text{ }}\prime})$ has accepted elasticity (take $r = 2k+1$ and $n = k+1$ in the above construction).
Accordingly, let $G$ be the direct sum of the groups $G_1, G_2, \ldots$, and $G_0 \subseteq G$ the disjoint union of the sets $G_1^{\fixed[0.2]{\text{ }}\prime}, G_2^{\fixed[0.2]{\text{ }}\prime}, \ldots$ It is then seen that $\mathcal B(G_0)$ is the coproduct of the monoids $\mathcal B(G_1^{\fixed[0.2]{\text{ }}\prime}), \mathcal B(G_2^{\fixed[0.2]{\text{ }}\prime}), \ldots$, which shows by \cite[Proposition 1.4.5]{GeHK06} that $\Delta(\mathcal B(G_0)) = \bigcup_{k \ge 1} \Delta(\mathcal B(G_k^{\fixed[0.2]{\text{ }}\prime})) = \mathbf N^+$ and  $\mathcal B(G_0)$ has accepted elasticity (therefore, $\mathcal B(G_0)$ satisfies the Strong Structure Theorem for Unions, by Theorem \ref{th:main-theorem-intro}).

So, by Claborn's Realization Theorem (see, e.g., \cite[Theorem 3.7.8]{GeHK06}), there exist a Dedekind domain $R$ with class group $\mathcal C(R)$ and a group isomorphism $\varphi: G \to \mathcal C(R)$ such that $\varphi(G_0)$ is the set, $G_P$, of all ideal classes of $R$ containing prime ideals, with the result that $\mathscr L(\mathcal B(G_0)) = \mathscr L(\mathcal B(G_P))$.

With this in hand, let $R^\bullet$ be the monoid of non-zero elements of $R$ under multiplication. We have by \cite[Example 2.3.2.1]{GeHK06} that $R^\bullet$ is a Krull monoid (recall that every Dedekind domain is a Krull domain). Therefore, we conclude from \cite[Theorem 3.4.10]{GeHK06} that $
\mathscr L(R^\bullet) = \mathscr L(\mathcal B(G_P)) = \mathscr L(\mathcal B(G_0))$,
which implies that (the multiplicative monoid of) $R$ satisfies the Strong Structure Theorem for Unions and $\Delta(R) = \mathbf N^+$.
\end{example}
\section*{Acknowledgements}
\label{subsec:acks}
The author is grateful to Alfred Geroldinger for asking the basic questions that have inspired this work and, more in general, for his guidance through the kaleidoscopic lands of factorization theory.


\begin{thebibliography}{99}
%
\bibitem{BaSm} N.R. Baeth and  D. Smertnig, \textit{Factorization theory: From commutative to noncommutative settings}, J. Algebra \textbf{441} (2015), 475--551.
%
\bibitem{ChFoGeOb16} S. Chapman, M. Fontana, A. Geroldinger, and B. Olberding (eds.), \textit{Multiplicative Ideal Theory and Factorization Theory: Commutative and Non-Commutative Perspectives}, Springer Proc. Math. Stat. \textbf{170}, Springer, 2016.
%
\bibitem{Pono} L. Crawford, V. Ponomarenko, J. Steinberg, and M. Williams, \textit{Accepted Elasticity in Local Arithmetic
Congruence Monoids}, Results Math. \textbf{66} (2014), No.~1, 227--245.
%
\bibitem{FGKT} Y. Fan, A. Geroldinger, F. Kainrath, and S. Tringali, \textit{Arithmetic of commutative semigroups with a focus on  semigroups  of ideals and modules}, J. Algebra Appl. \textbf{16} (2017), No.~12, 1750234.
    %
%
\bibitem{FTr} Y. Fan and S. Tringali, \textit{Power monoids: A bridge between Factorization Theory and Arithmetic Combinatorics}, J. Algebra \textbf{512} (Oct. 2018), 252--294.
%
\bibitem{Fa-Zh16a} Y.~Fan and Q.~Zhong, \textit{Products of $k$ atoms in Krull monoids}, Monatsh. Math. \textbf{181} (2016), 779 -- 795.
%
\bibitem{Fr-Ge08} M.~Freeze and A.~Geroldinger, \textit{Unions of sets of lengths}, Funct. Approx. Comment. Math. \textbf{39} (2008), No.~1, 149--162.
%
\bibitem{Ga-Ge09b} W.~Gao and A.~Geroldinger, \textit{On products of $k$ atoms}, Monatsh. Math. \textbf{156} (2009), 141--157.
%
\bibitem{Ge16c} A. Geroldinger, \textit{Sets of lengths}, Amer. Math. Monthly \textbf{123} (2016), No.~10, 960--988.
%
\bibitem{GeHK06} A.~Geroldinger and F.~Halter-Koch, \textit{Non-Unique Factorizations. Algebraic, Combinatorial and Analytic Theory}, Pure Appl. Math. \textbf{278}, Chapman \& Hall/CRC, Boca Raton (FL), 2006.
%
\bibitem{Ge-Sc17a} A.~Geroldinger and W.A.~Schmid, \textit{A realization theorem for sets of distances}, J. Algebra \textbf{481} (2017), 188--198.
%
%
\bibitem{GeSchZh17} A. Geroldinger, W.A. Schmid, and Q. Zhong, ``Systems of sets of lengths: Transfer Krull monoids versus weakly Krull monoids'', in: M.~Fontana, S.~Frisch, S.~Glaz, F.~Tartarone, and P.~Zanardo (eds.),
   \textit{Rings, Polynomials, and Modules}, Springer, 2017 (to appear).
%
\bibitem{GeSchw17} A. Geroldinger and E.D. Schwab, \textit{Sets of lengths in atomic unit-cancellative finitely presented monoids}, Colloq. Math. \textbf{151} (2018), 171--187.
%
\bibitem{GeZh17} A. Geroldinger and Q. Zhong, \textit{Long sets of lengths with maximal elasticity}, Canad. J. Math. 70 (2018), 1284--1318.
%
\bibitem{GoONe17} F. Gotti and C. O'Neill, \textit{The Elasticity of Puiseux Monoids}, to appear in J. Commut. Algebra (\href{https://arxiv.org/abs/1703.04207}{arXiv:1703.04207}).
%
\bibitem{HK93} F. Halter-Koch, \textit{\"Uber L\"angen nicht-eindeutiger Faktorisierungen und Systeme linearer diophantischer Ungleichungen}, Abh. Math. Semin. Univ. Hambg. \textbf{63} (1993), 265--276 (in German).
%
\bibitem{Le10} T. Leinster, \textit{Basic Category Theory}, Cambridge Stud. Adv. Math. \textbf{143}, Cambridge Univ. Press, Cambridge, 2014.
%
\bibitem{Nath} M.B. Nathanson, \textit{Elementary Methods in Number Theory}, Grad. Texts in Math. \textbf{195}, Springer, 2000.
%
\bibitem{Sc09a} W.A. Schmid, \textit{A realization theorem for sets of lengths}, J. Number Theory \textbf{129} (2009), 990--999.
%
\bibitem{Sc16a} \bysame, ``Some recent results and open problems on sets of lengths of Krull monoids with finite class group'', pp.~323--352 in \cite{ChFoGeOb16}.
%
\bibitem{Sm13} D. Smertnig, \textit{Sets of lengths in maximal orders in central simple algebras}, J. Algebra \textbf{390} (2013), 1--43.
%
\end{thebibliography}
\end{document}